\newcommand{\beq}{\begin{equation}}
\newcommand{\eeq}{\end{equation}}
\newcommand{\comment}[1]{}
\numberwithin{equation}{section}
\theoremstyle{plain}
\newtheorem{thm}{Theorem}[section]
\newtheorem{prop}[thm]{Proposition}
\newtheorem{cor}[thm]{Corollary}
\newtheorem{lem}[thm]{Lemma}
\newtheorem{defn}[thm]{Definition}
\newtheorem{asm}{Assumption}
\theoremstyle{remark}
\newtheorem{rem}[thm]{Remark}
\newtheoremstyle{key}{3pt}{3pt}{}{}{\itshape}{:}{.5em}{}
\theoremstyle{key}
\newtheorem*{MSC}{MSC2010 subject classification}
\newtheorem*{JEL}{JEL subject classification}
\newtheorem*{keywords}{Keywords}
\DeclareMathOperator*{\esssup}{ess\,sup}
\newcommand{\nbd}[1]{\(#1\)\nobreakdash-\hspace{0pt}}
\newcommand{\indi}[1]{\mathbf{1}_{#1}}
\newcommand{\R}{\mathds{R}}
\newcommand{\Exp}{E}
\newcommand{\Pb}{P}
\newcommand{\F}{{\mathscr F}}
\begin{document}
\title{\textbf{Continuous-Time Public Good Contribution \\ under Uncertainty: A Stochastic Control Approach}\footnote{Financial support by the German Research Foundation (DFG) via grant Ri 1142-4-2 is gratefully acknowledged.}}
\author{Giorgio Ferrari \thanks{Center for Mathematical Economics, Bielefeld University, Germany; \texttt{giorgio.ferrari@uni-bielefeld.de}}
\and Frank Riedel \thanks{Center for Mathematical Economics, Bielefeld University, Germany, and Department of Economic and Financial Services, University of Johannesburg, Republic of South Africa; \texttt{frank.riedel@uni-bielefeld.de}} \and Jan-Henrik Steg \thanks{Center for Mathematical Economics, Bielefeld University, Germany; \texttt{jsteg@uni-bielefeld.de}}}
\date{\today}
\maketitle

\vspace{0.5cm}

{\textbf{Abstract.}} In this paper we study continuous-time stochastic control problems with both monotone and classical controls motivated by the so-called public good contribution problem. That is the problem of $n$ economic agents aiming to maximize their expected utility allocating initial wealth over a given time period between private consumption and irreversible contributions to increase the level of some public good. 
We investigate the corresponding social planner problem and the case of strategic interaction between the agents, i.e.\ the public good contribution game.
We show existence and uniqueness of the social planner's optimal policy, we characterize it by necessary and sufficient stochastic Kuhn-Tucker conditions and we provide its expression in terms of the unique optional solution of a stochastic backward equation. Similar stochastic first order conditions prove to be very useful for studying any Nash equilibria of the public good contribution game. In the symmetric case they allow us to prove (qualitative) uniqueness of the Nash equilibrium, which we again construct as the unique optional solution of a stochastic backward equation. We finally also provide a detailed analysis of the so-called free rider effect.

\maketitle

\vspace{0.5cm}
\begin{MSC}
93E20, 91B70, 91A15, 91A25, 60G51
\end{MSC}

\begin{JEL}
C02, C61, C62, C73
\end{JEL}

\begin{keywords}
singular stochastic control, stochastic games, first order conditions for optimality, Nash equilibrium, L\'evy processes, irreversible investment, public good contribution, free-riding. 
\end{keywords}

\section{Introduction}\label{sec:intro}

In economics, a private good is \emph{excludable}, i.e.\ its owners can exercise private property rights, and \emph{rivalrous}, i.e.\ consumption by one necessarily prevents that by another. Private goods are, for example, food, consumable resources (like energy or tap water), but also most durables. On the other hand, public goods are \emph{nonexcludable} and \emph{nonrivalrous}. Clean environment, national security, academic research and accessible public capital like infrastructure are well known examples of public goods.
 
In this paper we consider the problem of $n$ economic agents aiming at maximizing their expected utility by allocating their initial wealth over a given time period $[0,T]$, $T \in (0,\infty]$, between consumption of a private good and irreversible investment to increase the aggregate level of some public good. That is, a so-called \emph{public good contribution problem}.
Firstly, we investigate the corresponding social planner problem, in which a fictitious decision maker takes care of the consumption choices of all the agents so to maximize the social welfare. Secondly, we consider the case of strategic interaction between the agents, i.e.\ a public good contribution game.
We model these classical economic problems in terms of general stochastic control problems in continuous time with both monotone (possibly singular with respect to the Lebesgue measure, as a function of time) and classical absolutely continuous control processes. The monotone controls represent the cumulative contributions into the public good, which are assumed irreversible, whereas the instantaneous consumption of the private good is modeled as a nonnegative rate with respect to time. 

We tackle the control problem by a first order condition approach that may be thought of as a stochastic, infinite-dimensional generalization of the classical Kuhn-Tucker conditions. Our method does not require any Markovian or diffusive hypothesis, and in this sense it represents a substitute in non-Markovian frameworks for the Hamilton-Jacobi-Bellman equation. This approach is very powerful in solving general singular control problems as it has been shown in a quite recent literature. We refer to \citet{BankRiedel01,BankRiedel03} for an intertemporal utility maximization problem with Hindy, Huang and Kreps preferences; to \cite{Bank05}, \cite{ChiarollaFerrari11}, \cite{Ferrari12} and \cite{RiedelSu11} for the irreversible investment problem of a monopolistic firm with both limited and unlimited resources; to \cite{Chiarollaetal12} for the social planner problem in a market with $N$ firms and limited resources; to \cite{Steg12} for a capital accumulation game.

We start analyzing the public good contribution problem by taking the point of view of a social planner who aims to maximize the expected total utility of the economy under a social budget constraint. Assuming (forward) prices of the public good and of the private consumption given by discounted exponential martingales, we prove existence and uniqueness of the social planner's optimal policy via a suitable application of Koml\'os' classical theorem \citep[cf.][]{Komlos67}, resp.\ of a generalization due to Kabanov (cf.\ Lemma 3.5 in \citealp{Kabanov99}; see also the functional analytic version of \citealp{Balder}). The optimal investment strategy is completely characterized by a set of necessary and sufficient stochastic Kuhn-Tucker conditions, which in turn lead to the identification of a signal process that triggers the optimal monotone control, i.e.\ the optimal investment rule into the public good. Such a signal process is the unique solution of a backward stochastic equation in the spirit of \cite{BankElKaroui04}. 

We then consider strategic interaction between the agents in our economy and we show that any Nash equilibrium (if it does exist) satisfies a similar set of first order conditions for optimality. In equilibrium, the agents act optimally, taking the contribution \emph{processes} of others as given.
In this sense we restrict our attention to \textsl{open-loop} strategies (see also \citealp{BackPaulsen09}, and \citealp{Steg12}) without explicit reactions to \emph{deviations} from announced (equilibrium) play. Indeed, as pointed out by \cite{BackPaulsen09}, there are serious conceptual problems defining a stochastic continuous-time game of singular controls as ours with more explicit feedback (\textsl{closed-loop}) strategies.
In a symmetric setting in which agents have utility functions of the same form and the same initial wealth, we show that there exists a symmetric Nash equilibrium, in which agents at any time consume the same amount of private good and invest the same amount in the public good. Further, any equilibrium has the same aggregate public good process as the symmetric one and the same private good consumption, which is hence a qualitative uniqueness result. As for the social planner solution, we are able to completely characterize the equilibrium public good level in terms of the running supremum of an optional process uniquely solving a backward stochastic equation \`a la \cite{BankElKaroui04}.
%which is here not related to a meaningful control problem, however.

Public goods are extensively studied in economics because of the so-called \emph{free rider effect}: agents enjoy the contributions of others but do not take into account others' benefits when making their own contributions (see, e.g., \citealp{CornesSandler96}, or \citealp{Laffont88}). This leads to an inefficiently low voluntary (strategic) supply of the public good. We confirm this phenomenon in the symmetric version of our general setting (with general concave utilities and general stochastic price processes): the total expenditure for the public good associated to the social planner's symmetric optimal policy is higher than the one related to a symmetric Nash equilibrium. Moreover, our approach also allows a detailed analysis of the degree of free riding in a more specific symmetric setting, where utilities are of Cobb-Douglas type and prices are driven by L\'evy uncertainty. Indeed, in such a case we can find the explicit forms of the social planner's optimal policy and of the Nash equilibrium so to compare their public good contribution processes. However, such a detailed comparison seems very hard to obtain in the general setting (see the discussion after Proposition \ref{prop:expenditure} below).

From the applied point of view, our model differs from the existing related literature\footnote{%
See, e.g., the classical work of \cite{Varian94} and the recent papers by \cite{Battaglinietal12}, \cite{Wang10} and \cite{YeungPetrosyan13} and references therein.
}
on public goods for two main aspects.
First of all, we consider a \emph{stochastic, dynamic} model of \emph{intertemporal} investment choice with general concave utilities, not necessarily separable, which thus allow us to account for cross effects between the public and the private good. To the best of our knowledge, this is a novelty with respect to the classical models in which usually the investor has a quasilinear utility\footnote{A quasilinear utility is an additively separable utility function which is linear in one of its arguments and concave in the others.} and can choose how to divide a budget given in each period between instantaneous private consumption and public good investment.
%, such that only the stock of the public good allows intertemporal transfers.
Secondly, we take into account \emph{irreversibility} of the investments (by modeling the public good contributions as monotone controls) which together with uncertainty typically induces reluctance to invest. We are able to analyze the interplay of this dynamic effect with free riding. Interestingly it is not necessarily the case that uncertainty and irreversibility of public good contributions aggravate the degree of free-riding.  We indeed explicitly evaluate the free rider effect in a symmetric Black-Scholes setting with Cobb-Douglas utilities and we show that uncertainty and irreversibility of public good provisions do not affect the \emph{degree} of free-riding.

The paper is organized as follows. In Section \ref{Model} we set up the model. In Section \ref{SocialPlanner} we consider the social planner problem, proving existence and uniqueness of its solution and introducing the stochastic Kuhn-Tucker conditions for optimality. The public good contribution game is addressed in Section \ref{game}, whereas explicit results are obtained in Section \ref{freerider}. Finally we refer to Appendix \ref{AppProofs} for some technical proofs.

%%%%%%%%%%%%%%%%%%%%%%%%%%%%%%%%%%%%%%%%%%%%%%%%%%%%%%%%%%%%%%%%%%%%%%%%%%%%%%%%%%%%%%%%%%%%%%%%%%%%%%%%%%%%%%%%%%%%%%%%%%%%%%%%%%%%%%%%%%%%%%%%%%%%%%%%%%%%%%%%%%%%%%%%%%%%%%%%%%%%%%%%%%%%%%%%%%%%%%%%%%%%%%%%%%%%%%%%%%%%%%%%%%%%%%%%%%%%%%%%%%%%%%%%%%%%%%%%%%%%%%%%%%%%%%%%%%%%%%%%%%%%%%%%%%%%%%%%%%%%%%%%%%%%%%%%%%%%%%%%%%%%%%%%%%%%%%%%%%%%%%%%%%%%%%%%%%%%%%%%%%%%%%%%%%%%%%%%%%%%%%%%%%%%%%%%%%%%%%%%%%%%%%%%%%%%%%%%%%%%%%%%%%%%%%%%%%%%%%%%%%%%%%%%%%%%%%%%%%

\section{The Model}
\label{Model}
We consider a continuous-time stochastic economy with a finite number $n\geq 1$ of agents over a fixed time horizon $0<T \leq \infty$. Each agent, indexed by \(i=1,\dots,n\), chooses how to allocate his initial wealth $w^i>0$ between private consumption $x^i$ and arbitrary but nondecreasing cumulative contributions $C^i$ to increase the level of some public good. We assume a continuous revelation of information about an exogenous source of uncertainty and we allow the agents to condition their decisions on the accumulated information. Formally, let \((\Omega,\F,\{\F_{t}\}_{t \in [0,T]},\Pb)\) be a filtered probability space satisfying the usual conditions of right-continuity and completeness. For the moment we do not make any Markovian assumption.

One may think that the agents are financed entirely by their labor or by holding a portfolio of financial instruments. Hence they are part of a more complex financial market that, however, we do not model explicitly. At the initial time each agent can buy one unit of the private good for contingent delivery at time $t\in [0,T]$ and state $\omega \in \Omega$ at forward price $\psi_x(\omega,t)$. Analogously, the contingent state-price for the contribution to the public good is $\psi_c(\omega,t)$. Both $\psi_x$ and $\psi_c$ are strictly positive (more technical conditions are listed in Assumption \ref{asm2} below). The forward price of an investment plan $(x^i, C^i)$ is therefore
\beq
\label{budget}
\Psi(x^i,C^i):=\Exp\biggl[\int_0^T \psi_x(t) x^i(t) dt + \int_0^T  \psi_c(t) dC^i(t)\biggr],
\eeq 
where $(x^i, C^i)$ can be chosen in the nonempty, convex budget-feasible set
\begin{equation}\label{Bwi}
\begin{split}
\mathcal{B}_{w^i}:=\biggl\{&(x^i, C^i)\colon \Omega \times [0,T] \mapsto \mathbb{R}_{+}^2\, \text{ optional, s.t.\ } C^i \text{ is right-continuous} \\ 
& \text{and nondecreasing with } C^i(0-) =0 \ \Pb\text{-a.s.}, \text{ and } \Psi(x^i,C^i) \leq w^i \biggr\}. 
\end{split}
\end{equation}
\begin{rem}
\label{rem:xoptional}
The assumption that $(x^i,C^i)$ is optional is not more restrictive than adaptedness. Indeed, adaptedness and right-continuity of $C^i$ imply optionality, cf.\ footnote \ref{fn:asmpsi}. If $x^i$ is only adapted, then its optional projection ${}^ox^i$ satisfies ${}^ox^i_t=x^i_t$ \nbd{\Pb}a.s.\ for all $t\in[0,T]$ and hence yields the same cost in \eqref{budget} and utility in \eqref{payoff} below thanks to Tonelli's Theorem.
\end{rem}

Here $\Psi(x^i,C^i)\leq w^i$ defines therefore the budget constraint of agent $i$. Notice that we can have jumps as well as singular increases (with respect to the Lebesgue measure) in $C^i$. This means that agents can contribute both in lumps at certain times and continuously, the latter even not necessarily in rates. Recalling that each $t \mapsto C^i(\omega,t)$ is a.s.\ nondecreasing, we in fact denote by $\int_0^T  (\cdot)\, dC^i(t)$ the Lebesgue-Stieltjes integral $\int_{[0,T]}(\cdot)\, dC^i(t)$ throughout, to include a possible initial jump of the contribution, corresponding to a point mass $C^i(0) > 0$ of the random Borel measure $dC^i$ at time $t=0$.

The agents are assumed to derive some expected, time-separable utility from the private good and the aggregate public good process \(C:=\sum_{i\in\{1,\dots,n\}}C^{i}\). Given a combination of strategies from $\prod_{i=1}^n \mathcal{B}_{w^i}$, agent \(i\)'s utility is

\begin{equation}
\label{payoff}
U^{i}(x^i,C^{i}; C^{-i}):=\Exp\biggl[\int_0^T e^{-\int_0^t r(s)\,ds}u^{i}(x^i(t), C(t))\,dt\biggr],
\end{equation}
where \(C^{-i}:=\sum_{j\in\{1,\dots,n\}\setminus i}C^{j}\), $r$ is an exogenous stochastic discount factor and \(u^{i}:\R_{+}^2 \mapsto\R_{+}\) is an instantaneous utility function. 
In the economic literature on public good contribution it is customary to assume quasilinear utilities (see, e.g., the early paper by \citealp{Varian94}, and the very recent one by \citealp{Battaglinietal12}). Here, instead, we work with general concave utilities, allowing to account for cross effects between the public and the private good.
\begin{asm}
\label{asm1}
\indent\par
\begin{enumerate}
\item\label{Ui_x}
$u^{i}$ is increasing and strictly concave on \(\R_+^2\), as well as twice continuously differentiable on the open cone \(\R_{++}^2\), with negative definite Hessian. Moreover, it satisfies the Inada conditions
$$\lim_{x\downarrow 0} u_x^i(x, c) =  + \infty \quad \text{ and } \quad \lim_{x \uparrow \infty}u_x^i(x, c) = 0$$
for any $c>0$. 

\item\label{unint}
The family $\displaystyle \Big(e^{-\int_0^t r(\omega,s)\,ds}u^{i}(x^i(\omega,t), C(\omega,t)),\,\,\,(x,C) \in \prod_{i=1}^n \mathcal{B}_{w^i}\Big)$ is $\Pb \otimes dt$-uniformly integrable.
\end{enumerate}
\end{asm}

\begin{asm}
\label{asm2}
\indent\par
\begin{enumerate}
\item\label{psi_c}
The adapted process $\psi_c:=\{\psi_c(t), t \in [0,T]\}$ is right-continuous and such that  $\psi_c(t)=e^{-\int_0^t r_c(s)ds}\mathcal{E}_c(t)$ a.s.\ for every $t\in[0,T]$, with some uniformly bounded and strictly positive process $r_c=\{r_c(t), t \in [0,T]\}$ and some exponential martingale $\mathcal{E}_c:=\{\mathcal{E}_c(t), t \in [0,T]\}$. Moreover, if $T=\infty$, one has $\psi_c(T)=0$ a.s.
\item \label{psi_x}
The adapted process $\psi_x:=\{\psi_x(t), t \in [0,T]\}$ is right-continuous and such that $\psi_x(t)=e^{-\int_0^t r_x(s)ds}\mathcal{E}_x(t)$ a.s.\ for every $t\in[0,T]$, with some uniformly bounded and strictly positive process $r_x:=\{r_x(t), t \in [0,T]\}$ and some exponential martingale $\mathcal{E}_x:=\{\mathcal{E}_x(t), t \in [0,T]\}$. 
%Moreover, if $T=\infty$, one has $\psi_x(T)=0$ a.s.{\color{blue}{IS IT REALLY NEEDED?}}
\item The optional, strictly positive process $r:=\{r(t), t \in [0,T]\}$ is uniformly bounded.\footnote{\label{fn:asmpsi}%
A stochastic process $X$ is: 
\begin{enumerate}
\item 
\textsl{optional} if it is measurable with respect to the optional sigma-field $\mathcal{O}$ on $\Omega\times[0,T]$ generated, e.g., by the right-continuous adapted processes;

\item 
\textsl{lower-semicontinuous in expectation} if for any stopping time $\tau$ one has $\liminf_{n\uparrow \infty}\Exp[X(\tau_n)] \geq \Exp[X(\tau)]$, whenever $\{\tau_n\}_{n\in \mathbb{N}}$ is a monotone sequence of stopping times converging to $\tau$;

\item 
of \textsl{class (D)} if $\{X(\tau),\,\,\tau\,\,\text{a stopping time}\}$ defines a uniformly integrable family of random variables on $(\Omega,\F,\Pb)$.
\end{enumerate}
We refer the reader to \cite{DellacherieMeyer78}, among others, for further details.
}  
\end{enumerate}
\end{asm}

Under Assumptions \ref{asm1} and \ref{asm2} the payoff in (\ref{payoff}) is well defined and finite for any $i=1,\dots,n$.

\pagebreak
\begin{rem}%
\label{ontheassumptions}%
\indent\par
\begin{enumerate}
	\item The Inada conditions of Assumption \ref{asm1}.i.\ guarantee that there will be an interior solution for optimal private consumption.
	\item Note that since \(u^{i}\) is concave in \(c\), by Assumption \ref{asm1}.\ref{unint}.\ \(e^{-\int_0^t r(s)\,ds}u^{i}_c(x(t),C(t))\) is $\Pb \otimes dt$-integrable for any \((x,C) \in \prod_{i=1}^n \mathcal{B}_{w^i}\) such that $C(0)>0$ a.s.
	\item Thanks to the budget constraint in \eqref{Bwi}, when $T<\infty$ one can easily adapt the arguments in the proof of Lemma 2.1 in \cite{BankRiedel01} to show that Assumption \ref{asm1}.ii.\ is satisfied if
	\begin{enumerate}
		\item for some $\alpha,\beta \in (0,1)$ one has $u^i(x,c) \leq \text{const.}(1 + x^{\alpha} + c^{\beta})$, and
	\item $$\mathcal{E}_x^{-1} \in L^{\hat{p}}(\Pb) \quad \text{and} \quad \mathcal{E}_c^{-1} \in L^{\hat{q}}(\Pb)$$  for some $\hat{p} > \frac{\alpha}{1-\alpha}$ and $\hat{q} > \frac{\beta}{1-\beta}$, with $\mathcal{E}_x$ and $\mathcal{E}_c$ as in Assumption \ref{asm2}.
	\end{enumerate}
	\item Right-continuity of $\psi_c$ follows already from taking a right-continuous version of the exponential martingale $\mathcal{E}_c$. By assuming that $\mathcal{E}_c$ has a last element, it is uniformly integrable and hence of class (D) \citep[see][Theorem 3.2]{RevuzYor}, cf.\ footnote \ref{fn:asmpsi}; the same holds then for $\psi_c$. Furthermore, $\psi_c$ is a supermartingale under Assumption \ref{asm2}.\ref{psi_c}.\ and hence lower semicontinuous in expectation also from the left. Indeed, $\Exp\bigl[\psi_c(u)\big|\F_t\bigr]\leq e^{-\int_0^t r_c(s)ds}\Exp\bigl[\mathcal{E}_c(u)\big|\F_t\bigr]=\psi_c(t)$. The same remarks apply to $\psi_x$.
	\item It is easy to see that Assumption \ref{asm2}.\ref{psi_c}.\ and \ref{asm2}.\ref{psi_x}.\ are satisfied, for example, by the classical benchmark cases of geometric Brownian motions (for discount factor $r_c$ suitably chosen to have $\psi_c(T)=0$ a.s.\ when $T=+\infty$).
	\item More generally, in Sections \ref{SocialPlanner} and \ref{game} below we can allow for a random field $u^i\colon\Omega\times[0,T]\times\R_+^2$ of instantaneous utilities such that for any given $(x,C)\in \prod_{i=1}^n \mathcal{B}_{w^i}$, the process $(\omega,t)\mapsto u^{i}(\omega,t,x(\omega,t),C(\omega,t))$ is progressively measurable. Then the properties in Assumption \ref{asm1}.i.\ are imposed for any given $(\omega,t)\in\Omega\times[0,T]$ on the mapping $(x,c)\mapsto u^{i}(\omega,t,x,c)$, while in ii.\ just the argument $(\omega,t)$ needs to be added.
\end{enumerate}
\end{rem}

%Finally, under the reasonable and common assumption that the private good is square-integrable, we also provide an abstract existence result for a Nash equilibrium (cf.\ Section \ref{existenceresultNash}).
%\begin{thm}
%\label{NashExistenceThm}
%Under Assumptions \ref{asm1} and \ref{AssExistence} (see Section \ref{existenceresultNash} below), there exists a Nash equilibrium $(\hat{x}^{i}, \hat{C}^{i})_{i\in\{1,\dots,n\}}\in\prod_{i=1}^n \mathcal{B}_{w^i}$ for the game: 
%\begin{equation*}%\label{Vibis}
%U^{i}(\hat{x}^i,\hat{C}^{i};\hat{C}^{-i})\geq U^{i}({x}^i,{C}^{i};\hat{C}^{-i}) \quad\text{ for all }\ (x^i,C^i)\in\mathcal{B}_{w^i},x^i\in\mathbb{L}^2(\mathcal{E}_x(T)d\Pb \otimes dt),
%\end{equation*}
%\(i=1,\dots,n\).
%\end{thm}

%%%%%%%%%%%%%%%%%%%%%%%%%%%%%%%%%%%%%%%%%%%%%%%%%%%%%%%%%%%%%%%%%%%%%%%%%%%%%%%%%%%%%%%%%%%%%%%%%%%%%%%%%%%%%%%%%%%%%%%%%%%%%%%%%%%%%%%%%%%%%%%%%%%%%%%%%%%%%%%%%%%%%%%%%%%%%%%%%%%%%%%%%%%%%%%%%%%%%%%%%%%%%%%%%%%%%%%%%%%%%%%%%%%%%%%%%%%%%%%%%%%%%%%%%%%%%%%%%%%%%%%%%%%%%%%%%%%%%%%%%%%%%%%%%%%%%%%%%%%%%%%%%%%%%%%%%%%%%%%%%%%%%%%%%%%%%%%%%%%%%%%%%%%%%%%%%%%%%%%%%%%%%%%%%%%%%%%%%%%%%%%%%%%%%%%%%%%%%%%%%%%%%%%%%%%%%%%%%%%%%%%%%%%%%%%%%%%%%%%%%%%%%%%%%%%%%%%%%%

\section{The Social Planner Problem}
\label{SocialPlanner}

We start our analysis by studying a social planner problem for the economy described in Section \ref{Model}. Throughout this section, denote by $(\underline{x}, \underline{C})$ a vector of investment processes valued in $\mathbb{R}_{+}^{2n}$ with components $(x^1,\dots,x^n,C^1,\dots,C^n)$ and introduce the nonempty, convex, social budget-feasible set
\begin{equation}
\label{Bw}
\begin{split}
\mathcal{B}_{w}:=\biggl\{&(\underline{x}, \underline{C})\colon \Omega \times [0,T] \mapsto \mathbb{R}_{+}^{2n}\, \text{ optional, s.t.\ } C^i \text{ is right-continuous and} \\ 
&\text{nondecreasing with } C^i(0-)=0\ \Pb\text{-a.s.},\, i=1,\dots,n, \text{ and } \sum_{i=1}^n \Psi(x^i,C^i) \leq w \biggr\}
\end{split}
\end{equation}
with $w:=\sum_{i=1}^n w^i$. We say that $(\underline{x}, \underline{C})$ is admissible if $(\underline{x}, \underline{C}) \in \mathcal{B}_{w}$. Suppose that there exists a fictitious social planner aiming to maximize the aggregate expected utility by allocating efficiently the available initial wealth. This amounts to solving the optimization problem with value 
\begin{equation}
\label{SPproblem}
V_{SP}(w):=\sup_{(\underline{x}, \underline{C}) \in \mathcal{B}_{w}} U_{SP}(\underline{x}, \underline{C}) := \sup_{(\underline{x}, \underline{C}) \in \mathcal{B}_{w}} \sum_{i=1}^n \gamma^i U^{i}(x^i,C^{i};C^{-i})
\end{equation}
with $U^{i}(x^i,C^{i};C^{-i})$ as in (\ref{payoff}) and for given positive weights $\gamma^i$, $i=1,\dots,n$, such that $\sum_{i=1}^n \gamma^i =1$.

\begin{thm}
\label{SPexistence}
Under Assumptions \ref{asm1} and \ref{asm2} there exists $(\underline{x}_{*}, \underline{C}_{*}) \in \mathcal{B}_w$ which solves the social planner problem \eqref{SPproblem}. $(\underline{x}_{*}, C_{*})$ with $C_*=\sum_{i=1}^n C^i_*$ is unique up to indistinguishability.
\end{thm}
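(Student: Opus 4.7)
The plan is a compactness argument combining Komlós' theorem for the private-consumption coordinates and Kabanov's Lemma 3.5 for the singular monotone controls, with convexity and upper semicontinuity of $U_{SP}$ delivering existence, and strict concavity delivering uniqueness.

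First I would observe that $V_{SP}<\infty$ by the uniform integrability in Assumption \ref{asm1}.\ref{unint}. Pick a maximizing sequence $(\underline{x}_k,\underline{C}_k)_{k\in\N}\subset \mathcal{B}_w$ with $U_{SP}(\underline{x}_k,\underline{C}_k)\uparrow V_{SP}$. The social budget inequality
$$\sum_{i=1}^n \Exp\!\left[\int_0^T \psi_x(t)\,x^i_k(t)\,dt+\int_0^T \psi_c(t)\,dC^i_k(t)\right]\le w$$
shows that each $(x^i_k)_k$ is bounded in $L^1(\Omega\times[0,T],\Pb\otimes\psi_x(t)dt)$ and that the random measures $\psi_c(\cdot)\,dC^i_k$ have uniformly bounded expected total mass.

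I would then apply Komlós' theorem (\cite{Komlos67}) coordinate by coordinate in $L^1$ with respect to the $\sigma$-finite measure $\Pb\otimes\psi_x\,dt$, which is equivalent to $\Pb\otimes dt$ since $\psi_x>0$. After replacing $(\underline{x}_k)$ by suitable convex combinations drawn from its tail, one obtains a sequence (still denoted $(\underline{x}_k)$) converging $\Pb\otimes dt$-a.e.\ to a nonnegative adapted limit $\underline{x}_*$. In parallel, Kabanov's Lemma 3.5 (\cite{Kabanov99}, see also the functional-analytic version in \cite{Balder}) yields a further refinement so that the right-continuous, nondecreasing $(\underline{C}_k)$ converges, in a sense strong enough to pass to the limit in the Lebesgue--Stieltjes integrals against $\psi_c$, to a right-continuous nondecreasing limit $\underline{C}_*$ with $C^i_*(0-)=0$ for each $i$. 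Since Komlós allows further thinning without destroying convergence, the $\underline{x}$- and $\underline{C}$-extractions can be made on a common sequence of convex weights. Controlling this joint limit of singular increments weighted by $\psi_c$ is the main technical step I foresee, as the $C^i_k$ are not bounded in any standard $L^p$ sense.

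Admissibility of $(\underline{x}_*,\underline{C}_*)$ then follows from the convexity of $\mathcal{B}_w$, so that the convex combinations remain admissible, combined with Fatou's lemma applied to the budget constraint in the limit, using $\psi_x,\psi_c\ge 0$ and the lower semicontinuity in expectation of $\psi_c$ from Assumption \ref{asm2}.\ref{psi_c}. For optimality, concavity of each $u^i$ in $(x,c)$ implies that the utility of a convex combination dominates the corresponding convex combination of utilities, so the averaged sequence still satisfies $U_{SP}(\underline{x}_k,\underline{C}_k)\to V_{SP}$; combined with the upper semicontinuity of $U_{SP}$ along the convergent subsequence, which in turn uses Assumption \ref{asm1}.\ref{unint} to pass Fatou through the expected integral, this yields $U_{SP}(\underline{x}_*,\underline{C}_*)\ge V_{SP}$, hence equality.

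Finally, uniqueness follows from strict concavity of each $u^i$ in $(x,c)$ (Assumption \ref{asm1}.\ref{Ui_x}): any two optimizers $(\underline{x}^a_*,\underline{C}^a_*)$, $(\underline{x}^b_*,\underline{C}^b_*)$ would have a midpoint lying in the convex set $\mathcal{B}_w$ and strictly improving upon $V_{SP}$, unless $x^{i,a}_*=x^{i,b}_*$ $\Pb\otimes dt$-a.e.\ for every $i$ and the aggregates $C^a_*=C^b_*$ agree up to indistinguishability. Since $U_{SP}$ depends on the individual $C^i$ only through their sum $C=\sum_i C^i$, uniqueness must be read at the level of the individual private consumptions and the aggregate public-good process; any admissible decomposition of $C_*$ across agents with the same cost remains optimal.
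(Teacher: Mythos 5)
Your proposal follows essentially the same route as the paper: Koml\'os' theorem for the consumption coordinates and Kabanov's Lemma 3.5 for the monotone controls applied to a maximizing sequence, with convexity of $\mathcal{B}_w$, Fatou's lemma and the uniform integrability of Assumption \ref{asm1}.\ref{unint} giving admissibility and optimality of the Ces\`aro limit, and strict concavity giving uniqueness (the paper implements your $L^1$ bounds via a change of measure with densities $\mathcal{E}_x(T)$ and $\mathcal{E}_c(T)$, which is equivalent to your weighting by $\psi_x$ and $\psi_c$, and it treats $T=\infty$ by truncation to $[0,K]$ and aggregation, a step you leave implicit). Your closing observation that only the individual $x^i_*$ and the aggregate $C_*$ are pinned down is a fair refinement of the uniqueness statement, which the paper asserts for the whole vector but justifies only ``as usual'' from strict concavity.
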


\begin{proof}
The proof is organized in three steps.\vspace{0.15cm}

\textsl{Step 1.}\quad In this first step we let $T < \infty$. Recall that $\psi_x(t)=e^{-\int_0^t r_x(s)ds}\mathcal{E}_x(t)$ and $\psi_c(t)=e^{-\int_0^t r_c(s)ds}\mathcal{E}_c(t)$ for some strictly positive and uniformly bounded processes $r_x$ and $r_c$, and for some exponential martingales $\mathcal{E}_x$ and $\mathcal{E}_c$ (cf.\ Assumption \ref{asm2}.\ref{psi_c}.).
Let $\tilde{\Exp}_c$ and $\tilde{\Exp}_x$ be the expectations under the measures $\tilde{\Pb}_c$ and $\tilde{\Pb}_x$ on $\F_T$ with Radon-Nikodym derivative $\mathcal{E}_c(T)$ and $\mathcal{E}_x(T)$, respectively, with respect to $\Pb$. Since $\mathcal{E}_x(T)>0$ and $\mathcal{E}_c(T)>0$ a.s., the measure $\Pb$ is equivalent both to $\tilde{\Pb}_c$ and $\tilde{\Pb}_x$. Also, from now on we shall set $d\mu_x:= d\tilde{\Pb}_x \otimes dt/T$.

Let $\{(\underline{x}_m, \underline{C}_m)\}_{m \in \mathbb{N}} \subset \mathcal{B}_w$ be a maximizing sequence; that is, a sequence of investment plans such that
$$\lim_{m \rightarrow \infty} \sum_{i=1}^n \gamma^i U^{i}(x^i_m,C^{i}_m;C^{-i}_m) = V_{SP}.$$
Notice that for any $i=1,\dots,n$
\begin{eqnarray}
\label{stimax}
w & \hspace{-0.25cm}  \geq \hspace{-0.25cm}  & \Exp\biggl[\int_0^T\psi_x(t)x^i_m(t) dt\biggr] = \Exp\biggl[\int_0^T e^{-\int_0^t r_x(s)ds} \Exp[\mathcal{E}_x(T)|\F_{t}]\,x^i_m(t)dt\biggr] \nonumber \\   
& \hspace{-0.25cm} = \hspace{-0.25cm} &  \Exp\biggl[\mathcal{E}_x(T) \int_0^Te^{-\int_0^t r_x(s)ds}x^i_m(t)dt\biggr] \geq K_1 \tilde{\Exp}_x\biggl[\int_0^T x^i_m(t) dt\biggr],
\end{eqnarray}
where the uniform boundedness of $r_x$ implies the last step with some constant $K_1>0$.
Hence each $\{x^i_m\}_{m\in \mathbb{N}}$, $i=1,\dots,n$, is $L^1$-bounded as sequence of random variables on the probability space $(\Omega \times [0,T],\mathcal{O}, d\mu_x)$, where $\mathcal{O}$ is the optional $\sigma$-algebra. Analogously, 
\begin{align*}%\label{stimac}
w &\geq \Exp\biggl[\int_0^T\psi_c(t)dC^i(t)\biggr] = \Exp\biggl[\int_0^T e^{-\int_0^t r_c(s)ds} \Exp[\mathcal{E}_c(T)|\F_{t}]\,dC^i(t)\biggr]\\
&=\Exp\biggl[\mathcal{E}_c(T) \int_0^T e^{-\int_0^t r_c(s)ds} dC^i(t)\biggr] 
= \tilde{\Exp}_c \biggl[\int_0^T e^{-\int_0^t r_c(s)ds} dC^i(t)\biggr] \geq K_2\tilde{\Exp}_c [C^i(T)],
\end{align*}
where the second equality follows from Theorem $1.33$ in \cite{Jacod79}, and with $K_2 > 0$ a suitable constant implied by the uniform boundedness of $r_c$.

Hence by Koml\'os' theorem (see \citealp{Komlos67}, for its standard formulation and \citealp{Kabanov99}, Lemma 3.5, for a version of Koml\'os' theorem for optional random measures\footnote{An optional random measure on $[0,T]$ is simply a random variable $\nu$ valued in the space of nonnegative Borel measures on $[0,T]$ (endowed with the topology of weak*-convergence) such that the process $\nu(\omega,t):=\nu(\omega, [0,t])$ is adapted. Our admissible public good contribution processes are the cumulative distributions of optional random measures, being adapted, right-continuous, and nonnegative.}), for every $i=1,\dots,n$ there exist two subsequences $\{\tilde{x}^i_m\}_{m \in \mathbb{N}} \subset \{x^i_m\}_{m \in \mathbb{N}}$ and $\{\tilde{C}^i_m\}_{m \in \mathbb{N}} \subset \{C^i_m\}_{m \in \mathbb{N}}$ and a $d\mu_x$-integrable and optional process $x^i_{*}$ and some optional random measure $dC^i_{*}$, $i=1,\dots,n$, such that, as $k\uparrow \infty$,
\begin{equation}
\label{convX}
X^i_k(t):=\frac{1}{k+1}\sum_{m=0}^k \tilde{x}^i_m(t) \rightarrow x^i_{*}(t),\,\,\,d\mu_x\text{-a.e.},
\end{equation}
and
\begin{equation}
\label{KomlosI}
I^i_k(t):=\frac{1}{k+1}\sum_{m=0}^k \tilde{C}^i_m(t) \rightarrow C^i_{*}(t) \text{ for every point of continuity of } C^i_{*}(\cdot) \text{ and } t=T,\, \tilde{\Pb}_c \text{-a.s.}
\end{equation}
From now on, with a slight abuse of notation, we will denote by $C^i_{*}$ as well the right-continuous modification of $C^i_{*}$. Notice that having $\lim_{k\rightarrow \infty} I^i_k(t) = C^i_{*}(t)$ $\tilde{\Pb}_c$-a.s.\ for every point of continuity of $C^i_{*}(\cdot)$ and for $t=T$ (cf.\ \eqref{KomlosI}) means that the sequence of measures $dI^i_k(\omega,\cdot)$ on $[0,T]$ converges weakly to $dC^i_{*}(\omega,\cdot)$ $\tilde{\Pb}_c$-a.e. $\omega$; that is (see, e.g., \cite{Billingsley86})
\begin{equation}
\label{weakconvI}
\lim_{k \rightarrow \infty} \int_0^T  f(t)dI^i_k(t) = \int_0^T f(t)dC^i_{*}(t),\,\,\,\tilde{\Pb}_c-a.s.,
\end{equation}
for every bounded and $dC^i_{*}$-a.e.\ continuous function $f$.
We now claim that the Koml\'os' limit $(\underline{x}_{*}, \underline{C}_{*}):=(x^1_{*},\dots,x^n_{*},C^1_{*},\dots,C^n_{*})$ belongs to $\mathcal{B}_w$  and that it is optimal for the social planner's problem (\ref{SPproblem}).
Indeed, $(\underline{X}_{k},\underline{I}_{k}):=(X^1_{k},\dots,X^n_{k},I^1_{k},\dots,I^n_{k}) \in \mathcal{B}_w$ by convexity of $\mathcal{B}_w$. Moreover \eqref{convX}, \eqref{weakconvI} and Fatou's Lemma imply 
\begin{align}\label{BudgetLim}
w &\geq \liminf_{k \rightarrow \infty}\sum_{i=1}^n\Exp\biggl[\int_0^T \psi_x(t)X^i_k(t) dt + \int_0^T\psi_c(t) dI^i_k(t)\biggr] \nonumber\\
&= \liminf_{k \rightarrow \infty}\sum_{i=1}^n \biggl( \tilde{\Exp}_x \biggl[\int_0^T e^{-\int_0^t r_x(s)ds} X^i_k(t) dt\biggr] + \tilde{\Exp}_c \biggl[\int_0^T e^{-\int_0^t r_c(s)ds} dI^i_k(t)\biggr] \biggr) \nonumber\\
&\geq \sum_{i=1}^n \biggl(\tilde{\Exp}_x \biggl[\int_0^T e^{-\int_0^t r_x(s)ds} x^i_*(t) dt\biggr] + \tilde{\Exp}_c \biggl[\int_0^T e^{-\int_0^t r_c(s)ds} dC^i_*(t)\biggr]\biggr) \\
&= \sum_{i=1}^n \Exp\biggl[\int_0^T \psi_x(t)x^i_*(t) dt + \int_0^T\psi_c(t) dC^i_*(t)\biggr]; \nonumber
\end{align}
that is, $(\underline{x}_{*}, \underline{C}_{*}) \in \mathcal{B}_w$. 
Recall now that $\Pb_x \sim \Pb$ and $\Pb_c \sim \Pb$. Then (\ref{KomlosI}) and (\ref{convX}) also hold $\Pb$-a.s.\ and $d\Pb \otimes dt$-a.e., respectively (in particular $I^i_k(t)\rightarrow C^i_{*}(t)$ $dt$-a.e., $\Pb$-a.s.), and therefore we may write
$$\sum_{i=1}^n \gamma^i U^i(x^i_{*}, C^{i}_{*}; C^{-i}_{*}) = \lim_{k \rightarrow \infty} \sum_{i=1}^n \gamma^i U^i(X^i_k, I^i_k; I^{-i}_k) = V_{SP}$$
by the uniform integrability assumed in Assumption \ref{asm1}.\ref{unint}.\ and because $(\underline{X}_{k}, \underline{I}_{k})$ is also a maximizing sequence by concavity of each \(U^i\).
Hence $(\underline{x}_{*}, \underline{C}_{*})$ is optimal.\vspace{0.15cm}

\noindent The above arguments also extend to the infinite horizon case $T=+\infty$ as it is shown in the next step.\vspace{0.25cm}

\textsl{Step 2.}\quad We only sketch the proof, since some of the arguments are similar to those employed in \textsl{Step 1}. Let $T=+\infty$ and $\{(\underline{x}_m, \underline{C}_m)\}_{m \in \mathbb{N}}$ be an admissible maximizing sequence for $V_{SP}$. For each $K\in\mathbb{N}$ we can apply the construction of \textsl{Step 1} to the interval $[0,K]$ to obtain measures $\tilde\Pb^K_x$ and $\tilde\Pb^K_c$ equivalent to $\Pb$ on $(\Omega,\F_K)$ and limit processes $x^{K,i}_*$ and optional random measures $dC^{K,i}_*$ such that the convergence \eqref{convX} resp.\ \eqref{KomlosI} holds on $[0,K]$. As the convergence occurs also $d\Pb \otimes dt$ a.e., resp.\ \nbd{\Pb}a.s., we can aggregate the limits $\{x^{K,i}_*\}_{K\in \mathbb{N}}$ and $\{dC^{K,i}_*\}_{K\in \mathbb{N}}$ consistently to optional processes $x^{i}_*$ and optional measures $dC^{i}_*$ while maintaining convergence. Now we can apply the budget estimate \eqref{BudgetLim} on each $[0,K]$, let $K\to\infty$ and conclude by monotone convergence that $(\underline{x}_{*}, \underline{C}_{*}) \in \mathcal{B}_w$. Optimality obtains exactly as in \textsl{Step 1}. \vspace{0.25cm}

\textsl{Step 3.}\quad Finally, uniqueness of $(\underline{x}_{*}, C_{*})$ up to indistinguishability follows as usual (both in the finite and in the infinite time-horizon case) from strict concavity of the utility functions $u^i$, $i=1,\dots,n$, and from convexity of $\mathcal{B}_w$.
\end{proof}

We now aim to characterize the social planner's optimal policy by means of a set of first order conditions for optimality. These conditions may be thought of as a stochastic, infinite dimensional generalization of the classical Kuhn-Tucker method and they have been used in diverse instances to solve singular stochastic control problems of the monotone follower type (cf.\ the overview in Section \ref{sec:intro}).%modeling irreversible investment problems or consumption problems with Hindy-Huang-Kreps preferences (see \cite{Bank05}, \cite{BankRiedel01}, \cite{Chiarollaetal12}, \cite{ChiarollaFerrari11}, \cite{Ferrari12}, \cite{RiedelSu11} and \cite{Steg12}, among others).

For any $(\underline{x}, \underline{C}) \in \mathcal{B}_w$ and for some Lagrange multiplier $\lambda > 0$, define the \textsl{Lagrangian functional} of problem (\ref{SPproblem}) as
\begin{align*}%\label{LagrangeSP}
\mathcal{L}^w(\underline{x},\underline{C}; \lambda):= 
%&\sum_{i=1}^n \gamma^i U^{i}(x^i,C^{i};C^{-i}) + \lambda\biggl\{ w - \sum_{i=1}^n \Exp\biggl[\int_0^T \psi_x(t) x^i(t) dt + \int_0^T  \psi_c(t) dC^i(t)\biggr]\biggr\} \\
&\sum_{i=1}^n \gamma^i \Exp\left[\int_0^T e^{-\int_0^t r(s)\,ds}u^{i}(x^i(t), C(t))\,dt\right] \\
&+ \,\lambda\biggl\{ w - \Exp\biggl[\int_0^T \psi_x(t) x(t) dt + \int_0^T  \psi_c(t) dC(t)\biggr]\biggr\},
\end{align*}
where again $x:= \sum_{i=1}^n x^i$ and $C:=\sum_{i=1}^n C^i$. Moreover, let $\mathcal{T}$ be the set of all $(\F_t)$-stopping times with values in $[0,T]$ a.s.\ and denote by $\nabla_{c} \mathcal{L}^w$ the Lagrangian functional's supergradient with respect to the aggregated public good contribution; that is, the unique optional process such that
\begin{equation*}
%\label{publicgoodsupergradient}
\nabla_{c} \mathcal{L}^w (\underline{x},\underline{C}; \lambda)(\tau):= \Exp\biggl[\int_{\tau}^T e^{-\int_0^t r(s)\,ds} \sum_{i=1}^n \gamma^i\,u^{i}_c(x^i(t), C(t))\,dt\biggm|\F_{\tau}\biggr] - \lambda\psi_c(\tau)
\end{equation*}
for any $\tau \in \mathcal{T}$. $\nabla_{c} \mathcal{L}^w (\underline{x},\underline{C}; \lambda)(\tau)$ may be interpreted as the future marginal expected utility net of the marginal contribution cost, which the social planner would incur by an infinitesimal investment into the public good at time $\tau \in \mathcal{T}$ when the investment plan is $(\underline{x},\underline{C}) \in \mathcal{B}_w$ and the shadow value of the budget is $\lambda$.

On the other hand, additional consumption of the private good $x^i$ affects marginal utility only at those times at which consumption actually occurs, thus leading to 
\begin{equation*}%\label{privategoodsupergradient}
\nabla_{x} \mathcal{L}^w (\underline{x},\underline{C}; \lambda)(t) := \gamma^i e^{-\int_0^{t} r(s)ds } u^i_{x}(x^i(t), C(t)) - \lambda \psi_x(t), \quad t \in [0,T].
\end{equation*}

\begin{rem}
\label{rem:supergrad}
Following Remark 3.1 in \cite{BankRiedel01}, $\nabla_{c}\mathcal{L}^w(\underline{x},\underline{C}; \lambda)$ is the Riesz representation of the Lagrangian gradient at $C$.
More precisely, for any arbitrary but fixed $\lambda>0$, define $\nabla_{c}\mathcal{L}^w(\underline{x},\underline{C}; \lambda)$ as the optional projection of the product-measurable process
\begin{equation*}%\label{progrmeas}
\Phi(\omega,t):= \int_{t}^T e^{-\int_0^s r(\omega, u)\,du} \sum_{i=1}^n \gamma^i\,u^{i}_c(\omega, x^i(\omega,s), C(\omega,s))\,ds - \,\lambda\psi_c(\omega, t)
\end{equation*}
for $\omega \in \Omega$ and $t \in [0,T]$. Hence $\nabla_{c}\mathcal{L}^w(\underline{x},\underline{C}; \lambda)$ is uniquely determined up to $\Pb$-indistinguishability and it holds
$$\mathbb{E}\biggl\{\,\int_{0}^T \nabla_{c}\mathcal{L}^w(\underline{x},\underline{C}; \lambda)dC(t)\biggr\} = \mathbb{E}\biggl\{\,\int_{0}^T\Phi(t) dC(t)\biggr\}$$
for all admissible $C$ \citep[cf.][Theorem 1.33]{Jacod79}.
\end{rem}

\begin{prop}
\label{prop1SP}
Let Assumptions \ref{asm1} and \ref{asm2} hold. An admissible policy $(\underline{x}_{*}, \underline{C}_{*})$ is optimal for the social planner's problem (\ref{SPproblem}) if and only if there exists a Lagrange multiplier $\lambda_*>0$ such that the following first order conditions (FOCs) hold true 
\begin{equation}
\label{FOC}
\left\{
\begin{array}{ll}
\displaystyle \Exp\biggl[\int_0^T \psi_x(t) x_{*}(t) dt + \int_0^T  \psi_c(t) dC_{*}(t)\biggr] = w, \\ \\
\displaystyle \Exp\biggl[\int_{\tau}^T e^{-\int_0^t r(s)\,ds} \sum_{i=1}^n \gamma^i u^{i}_c(x^i_{*}(t), C_{*}(t))\,dt\biggm|\F_{\tau}\biggr] \leq \lambda_*\psi_c(\tau)\,\,\,\Pb\text{-a.s.}\;\forall \,\,\tau \in \mathcal{T},  \\ \\
\displaystyle \Exp\biggl[\int_0^T \biggl( \Exp\biggl[\int_{t}^T e^{-\int_0^s r(u)\,du} \sum_{i=1}^n \gamma^i u^{i}_c(x^i_{*}(s), C_{*}(s))\,ds\biggm|\F_{t}\biggr] - \lambda_*\psi_c(t)\biggr)dC_{*}(t)\biggr] = 0, \\ \\
\displaystyle \gamma^i e^{-\int_0^{t} r(s)ds } u^i_{x}(x^i_{*}(t), C_{*}(t)) \leq \lambda_* \psi_x(t)\,\,\,\Pb\otimes dt\text{-a.e.},\, \text{with equality on } \{x^i_{*}>0\}. 
\end{array}
\right.
\end{equation}
\end{prop}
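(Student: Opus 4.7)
Suppose $(\underline{x}_*,\underline{C}_*)$ and $\lambda>0$ satisfy (i)--(iv). For any $(\underline{x},\underline{C})\in\mathcal{B}_w$, concavity of each $u^i$ on $\R_+^2$ yields, $\Pb\otimes dt$-a.e.,
\begin{align*}
\sum_{i=1}^n \gamma^i \bigl[u^i(x^i,C) - u^i(x^i_*,C_*)\bigr] &\leq \sum_{i=1}^n \gamma^i u^i_x(x^i_*,C_*)(x^i-x^i_*) \\
&\quad + \sum_{i=1}^n \gamma^i u^i_c(x^i_*,C_*)(C-C_*).
\end{align*}
Multiplying by $e^{-\int_0^t r(s)ds}$, integrating over $[0,T]$ and taking expectations, condition (iv) bounds the $x$-contribution pointwise by $\lambda\psi_x(x-x_*)$ (equality on $\{x^i_*>0\}$; on $\{x^i_*=0\}$ the inequality has the right sign since $x^i\geq 0$). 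For the $c$-contribution, I would write $C(t)-C_*(t)=\int_{[0,t]}d(C-C_*)(s)$, apply Fubini and the optional projection identity (Theorem 1.33 in \cite{Jacod79}) to rewrite it as
$$\Exp\!\left[\int_0^T \bigl(\nabla_c\mathcal{L}^w(\underline{x}_*,\underline{C}_*;\lambda)(s) + \lambda\psi_c(s)\mathds{1}_{\{s<T\}}\bigr)\,d(C-C_*)(s)\right].$$
Condition (ii) gives $\int \nabla_c\mathcal{L}^w\,dC\leq 0$ and (iii) gives $\int \nabla_c\mathcal{L}^w\,dC_* = 0$, so assembling the pieces yields
$$U_{SP}(\underline{x},\underline{C}) - U_{SP}(\underline{x}_*,\underline{C}_*) \leq \lambda\,\Exp\!\left[\int_0^T\psi_x(x-x_*)dt + \int_0^T\psi_c\,d(C-C_*)\right] \leq \lambda(w-w) = 0,$$
using admissibility of $(\underline{x},\underline{C})$ and the complementary slackness (i).

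\textbf{Necessity.} Suppose $(\underline{x}_*,\underline{C}_*)$ is optimal. The value $V_{SP}$ is the supremum of a concave functional over the convex set $\mathcal{B}_w$ defined by a single linear inequality, and Slater's condition is trivially satisfied by the null policy. Infinite-dimensional convex Lagrangian duality on the Banach space $\mathbb{L}^1(d\mu_x)^n\otimes\mathcal{V}_T^n$ introduced in the proof of Theorem \ref{SPexistence} therefore provides a multiplier $\lambda\geq 0$ such that $(\underline{x}_*,\underline{C}_*)$ maximizes $\mathcal{L}^w(\cdot;\lambda)$ and complementary slackness (i) holds; the Inada condition in Assumption \ref{asm1}.\ref{Ui_x} rules out $\lambda = 0$. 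The pointwise FOC (iv) is then obtained by perturbing $x^i_*$ by $\pm\epsilon h\mathds{1}_{[\tau,\tau+\delta]}$, with $h\in L^\infty(\F_\tau)$ nonnegative, $\epsilon,\delta\downarrow 0$, and using one-sided Lagrangian directional derivatives. Condition (ii) follows from the always-admissible upward perturbation $C^i_* + \epsilon h\mathds{1}_{[\tau,T]}$: non-improvement combined with the optional projection yields $\Exp[h\cdot \nabla_c\mathcal{L}^w(\tau)]\leq 0$ for every such $h$, hence $\nabla_c\mathcal{L}^w(\tau)\leq 0$ $\Pb$-a.s. Finally, (iii) follows from the radial perturbation $(1-\epsilon)\underline{C}_*$ (admissible since the cost strictly decreases): after Fubini it produces the reverse inequality $\Exp\bigl[\int_0^T\nabla_c\mathcal{L}^w\,dC_*\bigr]\geq 0$, which combined with (ii) forces equality.

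\textbf{Main obstacle.} The delicate point is the monotone, possibly singular nature of $\underline{C}$: only one-sided variations are unconditionally admissible, so (ii) and (iii) must be derived from upward and radial perturbations respectively, rather than two-sided ones. The optional projection defining $\nabla_c\mathcal{L}^w$ (cf.\ the Remark preceding the statement) is precisely what allows the integrated inequality to be promoted to a pointwise statement at every stopping time $\tau\in\mathcal{T}$ via an optional section argument; the remainder of the argument is routine convex analysis together with the Fubini/optional projection interchange already used in the sufficiency part.
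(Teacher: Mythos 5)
Your sufficiency argument coincides with the paper's: concavity of the $u^i$, Fubini, the optional-projection identity (Theorem 1.33 in \cite{Jacod79}), and the four conditions are combined in exactly the same way.

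For necessity you take a genuinely different route. The paper never invokes abstract duality: it defines the multiplier explicitly as $M:=\Pb\text{-}\esssup\sup_{t}\max_i\big\{e^{-\int_0^t r(s)ds}\gamma^i u^i_x(x^i_{*}(t),C_{*}(t))/\psi_x(t),\,\Psi_{*}(t)/\psi_c(t)\big\}$, first shows (via the convex combinations $\epsilon(\underline{x},\underline{C})+(1-\epsilon)(\underline{x}_{*},\underline{C}_{*})$, which never leave $\mathcal{B}_w$, together with a Vitali/uniform-integrability argument) that the optimum also solves the linearized problem, and then proves that the linear problem's value equals $Mw$ by exhibiting budget-exhausting strategies supported near the stopping times $\tau^i_K,\sigma_K$ where the ratios approach $M$; the flat-off conditions then drop out of the resulting chain of equalities. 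Your route --- scalar Slater duality to produce $\lambda$, followed by one-sided perturbations for (ii)--(iv) --- is cleaner in outline but has a concrete gap: the separation argument behind the saddle-point theorem requires $U_{SP}$ to be finite on the \emph{unconstrained} convex set of policies (otherwise the convex set $\{(\text{cost}-w,\,U_{SP})\}$ contains vertical rays and cannot be separated from $(0,V_{SP})$), and your subsequent perturbations $x^i_{*}+\epsilon h\mathds{1}_{[\tau,\tau+\delta]}$ and $C^i_{*}+\epsilon h\mathds{1}_{[\tau,T]}$ likewise leave $\mathcal{B}_w$. Assumption \ref{asm1}.\ref{unint}.\ only guarantees integrability of the utilities along budget-feasible plans, so neither the duality step nor the directional derivatives of the Lagrangian in those directions are justified without an additional integrability argument (e.g.\ extending the uniform integrability to $\mathcal{B}_{w'}$ for some $w'>w$, or dominating the marginal utilities of the perturbed plans by those of the unperturbed one via concavity). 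This is precisely the difficulty the paper's construction is engineered to avoid: every comparison strategy it uses stays inside $\mathcal{B}_w$. Your radial perturbation $(1-\epsilon)\underline{C}_{*}$ for (iii) and the derivation of (ii) pointwise via optional section are fine as such; it is the existence and use of the unconstrained Lagrangian maximizer that needs repair.
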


The proof of Proposition \ref{prop1SP} is given in Appendix \ref{AppProofs}, Section \ref{proofFOCSprop}. It generalizes that of Theorem $3.2$ in \cite{BankRiedel01} to the present setting of a multidimensional optimal consumption problem with both classical and monotone controls and it is obtained by suitably adapting the proof of the classical Kuhn-Tucker conditions to the stochastic, infinite-dimensional setting. Indeed, concavity of the utility functions $u^i$, $i=1,\dots,n$, yields sufficiency, whereas the proof of the necessity part is more delicate. We linearize the original problem (\ref{SPproblem}) around its optimal solution $(\underline{x}_{*}, \underline{C}_{*})$ and then show that $(\underline{x}_{*}, \underline{C}_{*})$ solves the linearized problem as well. It is then proved that any solution to the linearized problem (and therefore $(\underline{x}_{*}, \underline{C}_{*})$ as well) satisfies some flat-off conditions as the third and the fourth of (\ref{FOC}).

Notice that because of the Inada condition $\lim_{x\downarrow 0}u^i_x(x,c)=\infty$ (cf.\ Assumption \ref{asm1}.\ref{Ui_x}.), the fourth one of \eqref{FOC} must be always binding, i.e.
\begin{equation*}%\label{binding}
\gamma^i e^{-\int_0^{t} r(s)ds } u^i_{x}(x^i_{*}(t), C_{*}(t)) = \lambda_* \psi_x(t) \quad \Pb\otimes dt\text{-a.e.}
\end{equation*}
Thus, recalling that $(x,c) \mapsto u^i(x,c)$ is strictly concave (see again Assumption \ref{asm1}) and denoting by $g^i(\cdot, c)$ the inverse of $u^i_x(\cdot, c)$, we may write
\beq
\label{binding2}
x^i_{*}(t)=g^i\Big(\frac{\lambda_*}{\gamma^i} e^{\int_0^{t} r(s)ds } \psi_x(t), C_{*}(t)\Big) \quad \Pb \otimes dt\text{-a.e.}
\eeq
Then, by plugging (\ref{binding2}) into (\ref{FOC}) we obtain the equivalent formulation of the first order conditions for optimality
\begin{equation}
\label{FOC2}
\left\{
\begin{array}{ll}
\displaystyle \Exp\biggl[\int_0^T \psi_x(t) \sum_{i=1}^n g^i\Big(\frac{\lambda_*}{\gamma^i} e^{\int_0^{t} r(s)ds } \psi_x(t), C_{*}(t)\Big)\, dt + \int_0^T  \psi_c(t) dC_{*}(t)\biggr] = w, \\ \\
\displaystyle \Exp\biggl[\int_{\tau}^T e^{-\int_0^t r(s)\,ds} \sum_{i=1}^n \gamma^i h^{i}\Big(\frac{\lambda_*}{\gamma^i} e^{\int_0^{t} r(u)du }\psi_x(t), C_{*}(t)\Big)\,dt \biggm|\F_{\tau}\biggr] \leq \lambda_*\psi_c(\tau)\,\,\,\Pb\text{-a.s.}\;\forall \,\,\tau \in \mathcal{T},  \\ \\
\displaystyle \Exp\biggl[\int_0^T  \biggl( \Exp\biggl[\int_{t}^T e^{-\int_0^s r(u)\,du} \sum_{i=1}^n \gamma^i h^i\Big(\frac{\lambda_*}{\gamma^i} e^{\int_0^{s} r(u)du }\psi_x(s), C_{*}(s)\Big)\,ds\biggm|\F_{t}\biggr] - \lambda_*\psi_c(t)\biggr)dC_{*}(t)\biggr] = 0,
\end{array}
\right.
\end{equation}
with $h^i(\psi,c):=u^i_c(g^i(\psi,c),c)$. In Lemma \ref{fn:h_c<0} in the appendix some properties of $h^i$ are proved.

Although the first order conditions of Proposition \ref{prop1SP} (or those in (\ref{FOC2})) completely characterize the optimal policy, they are not binding at all times and so they cannot be directly used to determine $C_{*}$ and consequently $\underline{x}_{*}$ by (\ref{binding2}). As usual in the literature on stochastic control problems with monotone controls (see, e.g., \cite{ChiarollaHaussmann94}, \cite{ElkarouiKaratzas91}, \cite{Karatzas81}, \cite{KaratzasShreve84} as classical references), the optimal policy consists of keeping the controlled process close to some barrier (which is the free boundary of an associated optimal stopping problem in a Markovian setting) in a `minimal way' (i.e.\ according to a Skorokhod's reflection principle). Here we derive the social planner's optimal investment into the public good $C_{*}$ in terms of the running supremum of an index process representing the desirable value of investment or consumption the agents would like to have. Mathematically, such an index process is the optional solution of a stochastic backward equation in the spirit of Bank-El Karoui (cf.\ \cite{BankElKaroui04}, Theorem $1$ and Theorem $3$) and it may be represented in terms of the value functions of a family of optimal stopping problems (see also \cite{BankFoellmer03} for further details and applications).

The following result is proved in Appendix \ref{AppProofs}, Section \ref{proof:prop:existenceback}.

\begin{prop}
\label{prop:existenceback}
Define for every \(i=1,\dots,n\) \(g^i(\cdot,c)\) as the inverse of \(u_x^i(\cdot,c)\), as well as \(h^i(\psi,c):=u_c^i(g(\psi,c),c)\) for any \(\psi,c>0\). Let Assumptions \ref{asm1} and \ref{asm2} hold and suppose that the Inada conditions $\lim_{c \downarrow 0}h^i(\psi,c)=+\infty$ and $\lim_{c \uparrow \infty}h^i(\psi,c)=0$ are satisfied. Then for any given set of weights $\{\gamma_i\}_{i=1,...,n}$ and for any given Lagrange multiplier $\lambda>0$ there exists a unique (up to indistinguishability) optional, upper right-continuous process $l^{*}$ solving the stochastic backward equation
\begin{equation}
\label{SPback}
\Exp\biggl[\int_{\tau}^T e^{-\int_0^t r(s)\,ds} \sum_{i=1}^n \gamma^i h^{i}\biggl(\frac{\lambda}{\gamma^i} e^{\int_0^{t} r(u)du }\psi_x(t), \sup_{\tau \leq u \leq t} l(u)\biggr)\,dt \biggm|\F_{\tau}\biggr] = \lambda\psi_c(\tau)\mathds{1}_{\{\tau < T\}} 
\end{equation}
for any stopping time $\tau \in [0,T]$ subject to $l^{*}_T=0$, \(\Pb\)-a.s.
\end{prop}

Notice that differently to those on $u^i$ (cf.\ Assumption \ref{asm1}), Inada conditions on $h^i$ do not have an immediate economic interpretation. However these requirements have a clear mathematical importance, as they suffice to show existence of a unique optional, upper right-continuous $l^{*}$ solving \eqref{SPback} (see Section \ref{proof:prop:existenceback} of Appendix \ref{AppProofs} for details).  
\begin{thm}
\label{SPoptpolicy}
Under the assumptions of Proposition \ref{prop:existenceback}, $(x^1_*,\dots,x^n_*,C^1_*,\dots,C^n_*)$ solves the social planner's problem \eqref{SPproblem} if and only if 
\begin{equation}
\label{optimalSP}
\left\{
\begin{array}{ll}
C_{*}(t)=\sum_{i=1}^n C^i_{*}(t)=\sup_{0 \leq u \leq t}l^{*}(u) \vee 0, \\ \\
x^i_{*}(t)=g^i\Big(\frac{\lambda_*}{\gamma^i} \psi_x(t), C_{*}(t)\Big),\,\,\,\,i=1,\dots,n,
\end{array}
\right.
\end{equation}
where $l^*$ is the unique solution of \eqref{SPback} for a suitable Lagrange multiplier $\lambda_*>0$ such that $\sum_{i=1}^n\Psi(x^i_{*},C^i_{*})=w$.
\end{thm}

\begin{proof}
To show optimality of $(\underline{x}_{*}(t),C_{*}(t))$ as in (\ref{optimalSP}) it suffices to verify that it is admissible and satisfies the (necessary and) sufficient first order conditions (\ref{FOC2}).
By Theorem 33 in Chapter IV of \cite{DellacherieMeyer78}, $C_{*}$ as in (\ref{optimalSP}) is progressively measurable since $l^{*}$ is optional; also, it has right-continuous sample paths since $l^{*}$ is upper right-continuous. Hence it is optional. On the other hand, $x^i_{*}$, $i=1,\dots,n$, is optional and positive since $g^i$ is continuous and positive. Moreover, for any $\tau \in [0,T)$ we have
\begin{align}\label{checkFOCs}
&\Exp\biggl[\int_{\tau}^T e^{-\int_0^t r(s)\,ds} \sum_{i=1}^n \gamma^i h^{i}\Big(\frac{\lambda_*}{\gamma^i} e^{\int_0^{t} r(u)du }\psi_x(t), \sup_{0 \leq u \leq t}l^{*}(u) \vee 0\Big)\,dt \biggm|\F_{\tau}\biggr] \notag\\
\leq\; &\Exp\biggl[\int_{\tau}^T e^{-\int_0^t r(s)\,ds} \sum_{i=1}^n \gamma^i h^{i}\Big(\frac{\lambda_*}{\gamma^i} e^{\int_0^{t} r(u)du }\psi_x(t), \sup_{\tau \leq u \leq t}l^{*}(u)\Big)\,dt \biggm|\F_{\tau}\biggr] = \lambda_* \psi_c(\tau),
\end{align}
where the first inequality follows from the fact that $c \mapsto h^i(\psi,c)$ is strictly decreasing (see Lemma \ref{fn:h_c<0}), whereas (\ref{SPback}) implies the last equality. On the other hand, if $\tau \in \mathcal{T}$ is a time of investment, i.e.\ such that $C_{*}(\tau + \varepsilon) - C_{*}(\tau-) > 0$\footnote{That is, $\tau$ is a time of increase for $C_*(\omega,\cdot)$.} for any $\varepsilon>0$, we have $\sup_{0 \leq u \leq t}l^{*}(u) \vee 0 = \sup_{\tau \leq u \leq t}l^{*}(u)$ on $(\tau,T]$ and equality holds in (\ref{checkFOCs}). Since those times carry the measure $dC_*$, the third line of (\ref{FOC2}) is satisfied as well (note that $dC_*$ does not charge $[T]$ by $l^*_T=0$).

We now argue that the required $\lambda_*$ to satisfy the budget constraint $\sum_{i=1}^n\Psi(x^i_{*},C^i_{*})=w$ exists. Indeed, by Theorem \ref{SPexistence} we know a priori that a solution $(\underline{x}_{*}(t),C_{*}(t))$ to our control problem exists, and by necessity in Proposition \ref{prop1SP} there is a corresponding $\lambda_*$. That solution satisfies also the FOCs \eqref{FOC2}, of which we now ignore the first line. Then we have FOCs for a control problem \emph{without} budget constraint, but with linear investment cost $\lambda_*\psi_c\,dC$ and profit stream
\[
H(\omega,t,c):=e^{-\int_0^t r(\omega,s)\,ds} \sum_{i=1}^n \gamma^i \int_0^c h^{i}\Big(\frac{\lambda_*}{\gamma^i} e^{\int_0^{t} r(\omega,u)du }\psi_x(\omega,t), y\Big)\,dy,
\]
i.e., the problem of maximizing
\[
\Exp\biggl[\int_0^T H(t,C_t)\,dt - \int_0^T \lambda_*\psi_c(t)\,dC_t\biggr]
\]
over $C$ adapted, right-continuous and nondecreasing with $C(0-)=0$, \nbd{\Pb}a.s. This latter problem has a unique solution (by strict concavity) given by $(\underline{x}_{*}(t),C_{*}(t))$ since that solves the sufficient FOCs in \eqref{FOC2}. On the other hand, our solution from \eqref{optimalSP} (now with $\lambda_*$ from Proposition \ref{prop1SP}) also solves those FOCs, as verified above. Hence, the latter solution must coincide with $(\underline{x}_{*}(t),C_{*}(t))$ from Theorem \ref{SPexistence} and thus be budget-feasible.

Finally, necessity follows from the uniqueness established in Theorem \ref{SPexistence}.
\end{proof}

\begin{rem}
\label{onthesignalprocess}
The process $l^{*}$ may be found numerically by backward induction on a discretized version of problem \eqref{SPback} (see \cite{BankFoellmer03}, Section $4$). In some cases, when $T=+\infty$, \eqref{SPback} has a closed form solution as in the case of a Cobb-Douglas utility function (see Section \ref{freerider} below).
\end{rem}

%%%%%%%%%%%%%%%%%%%%%%%%%%%%%%%%%%%%%%%%%%%%%%%%%%%%%%%%%%%%%%%%%%%%%%%%%%%%%%%%%%%%%%%%%%%%%%%%%%%%%%%%%%%%%%%%%%%%%%%%%%%%%%%%%%%%%%%%%%%%%%%%%%%%%%%%%%%%%%%%%%%%%%%%%%%%%%%%%%%%%%%%%%%%%%%%%%%%%%%%%%%%%%%%%%%%%%%%%%%%%%%%%%%%%%%%%%%%%%%%%%%%%%%%%%%%%%%%%%%%%%%%%%%%%%%%%%%%%%%%%%%%%%%%%%%%%%%%%%%%%%%%%%%%%%%%%%%%%%%%%%%%%%%%%%%%%%%%%%%%%%%%%%%%%%%%%%%%%%%%%%%%%%%%%%%%%%%%%%%%%%%%%%%%%%%%%%%%%%%%%%%%%%%%%%%%%%%%%%%%%%%%%%%%%%%%%%%%%%%%%%%%%%%%%%%%%%%%%%

\section{The Public Good Contribution Game}
\label{game}

In Section \ref{SocialPlanner} we have taken the point of view of a fictitious social planner aiming to efficiently maximize the social welfare. Here we aim to study strategic interaction between the agents of our economy. Therefore assume from now on $n>1$.

Determining agent \(i\)'s optimal choice of a strategy against a given process \(C^{-i}\) specifying aggregate contributions by the opponents amounts to solving the stochastic control problem with value function
\begin{equation}
\label{Vi}
V^{i}(C^{-i}):=\sup_{(x^i,C^{i})\in\mathcal{B}_{w^i}}U^{i}(x^i,C^{i};C^{-i}), \qquad i=1,\dots,n,
\end{equation}
where $\mathcal{B}_{w^i}$ and $U^i$ are as in (\ref{Bwi}) and (\ref{payoff}), respectively.
The description of the game is completed by the introduction of a standard Nash equilibrium concept.
\begin{defn}
\label{precNash}
\((\hat{x}^{1},\dots,\hat{x}^{n},\hat{C}^{1},\dots,\hat{C}^{n})\) is a \emph{Nash equilibrium} if for all \(i\in\{1,\dots,n\}\), \((\hat{x}^{i},\hat{C}^{i})\in\mathcal{B}_{w^i}\) and \(U^{i}(\hat{x}^{i}, \hat{C}^{i},\hat{C}^{-i})=V^{i}(\hat{C}^{-i})\).
\end{defn}

While this equilibrium notion does not limit the ability of any agent to optimize against given strategies of the others, it does limit the extent of dynamic interaction that can take place. Although agents do react to the evolving exogenous uncertainty, they take the contribution processes of others as given and do not react to deviations from announced (equilibrium) play. Therefore, one might term such an equilibrium as one in \emph{precommitment strategies}. Unfortunately there are serious conceptual difficulties in defining a related game with more explicit feedback strategies as argued by \cite{BackPaulsen09}, which is why we consider simple Nash equilibria here. Besides these conceptual problems, the choice of open-loop strategies we provide here can be justified at the modeling stage if agents are not able to observe the opponents' investments in the public good.

As in the social planner's case we shall first characterize solutions of the best reply problems (\ref{Vi}) by means of a stochastic Kuhn-Tucker approach. The next Proposition accomplishes this. Its proof may be obtained by adopting arguments similar to those employed to prove Proposition \ref{prop1SP} and therefore we omit its proof for the sake of brevity.

\begin{prop}\label{prop1game}
Let $\hat{C}^{-i}$ be given and Assumptions \ref{asm1} and \ref{asm2} hold. Then $(\hat{x}^i, \hat{C}^i) \in \mathcal{B}_{w^i}$ attains $V^{i}(\hat{C}^{-i})$ (cf.\ (\ref{Vi})) if and only if there exists a Lagrange multiplier $\hat{\lambda}^i>0$ such that the following first order conditions hold true
\begin{equation}
\label{FOCgame}
\left\{
\begin{array}{ll}
\displaystyle \Exp\biggl[\int_0^T \psi_x(t) \hat{x}^i(t) dt + \int_0^T  \psi_c(t) d\hat{C}^i(t)\biggr] = w^i, \\ \\
\displaystyle \Exp\biggl[\int_{\tau}^T e^{-\int_0^t r(s)\,ds} u^{i}_c(\hat{x}^i(t), \hat{C}(t))\,dt\biggm|\F_{\tau}\biggr] \leq \hat{\lambda}^i\psi_c(\tau)\,\,\,\Pb\text{-a.s.}\;\forall \,\,\tau \in \mathcal{T},  \\ \\
\displaystyle \Exp\biggl[\int_0^T  \biggl( \Exp\biggl[\int_{t}^T e^{-\int_0^s r(u)\,du}  u^{i}_c(\hat{x}^i(s), \hat{C}(s))\,ds\biggm|\F_{t}\biggr] - \hat{\lambda}^i\psi_c(t)\biggr)d\hat{C}^i(t)\biggr] = 0,\\ \\
\displaystyle e^{-\int_0^{t} r(u)\,du} u^i_{x}(\hat{x}^i(t), \hat{C}(t)) \leq \hat{\lambda}^i \psi_x(t)\,\,\,\Pb\otimes dt\text{-a.e.},\,\text{with equality on } \{\hat{x}^i>0\}. 
\end{array}
\right.
\end{equation}
\end{prop}

Again, the Inada conditions (cf.\ Assumption \ref{asm1}.\ref{Ui_x}.) imply that the fourth one of (\ref{FOCgame}) is always binding. Hence, we may equivalently rewrite (\ref{FOCgame}) as
\begin{equation}
\label{FOC2game}
\left\{
\begin{array}{ll}
\displaystyle \Exp\biggl[\int_0^T \psi_x(t) g^{i}(\hat{\lambda}^i e^{\int_0^{t} r(s)\,ds}\psi_x(t), \hat{C}(t))\, dt + \int_0^T  \psi_c(t) d\hat{C}^i(t)\biggr] = w^i, \\ \\
\displaystyle \Exp\biggl[\int_{\tau}^T e^{-\int_0^t r(s)\,ds}  h^{i}(\hat{\lambda}^i e^{\int_0^{t} r(s)\,ds}\psi_x(t), \hat{C}(t))\,dt \biggm|\F_{\tau}\biggr] \leq \hat{\lambda}^i \psi_c(\tau)\,\,\,\Pb\text{-a.s.}\;\forall \,\,\tau \in \mathcal{T},  \\ \\
\displaystyle \Exp\biggl[\int_0^T  \biggl( \Exp\biggl[\int_{t}^T e^{-\int_0^s r(u)\,du}  h^i(\hat{\lambda}^i e^{\int_0^{s} r(u)\,du}\psi_x(s), \hat{C}(s))\,ds\biggm|\F_{t}\biggr] - \hat{\lambda}^i\psi_c(t)\biggr)d\hat{C}^i(t)\biggr] = 0,
\end{array}
\right.
\end{equation}
where again $h^i(\psi,c):=u^i_c(g^i(\psi,c),c)$ with $g^i(\cdot,c)$ the inverse of $u^i_x(\cdot,c)$.

A more explicit characterization of a Nash equilibrium (if it exists) in the spirit of Theorem \ref{SPoptpolicy} for the social planner's optimal policy seems difficult to obtain in the generality of our public good contribution game. In fact the first order conditions in \eqref{FOC2game} are mutually interconnected and this leads to a daunting (infinite-dimensional) fixed point problem.

However, when all the agents have the same utility function and the same initial wealth, we can make quite clear predictions by our approach. We can now establish existence of a symmetric Nash equilibrium in which $(\hat x^i,\hat C^i)=(\hat x^1,\hat C^1)$ for all $i=1,\dots,n$. We can relate all other equilibria to the symmetric one, giving us also a qualitative uniqueness. Therefore we exploit the fact that in the symmetric case the first order conditions of Proposition \ref{prop1game} are the same for all agents (including $\hat\lambda^i$) and they have a similar form as those for the social planner with equal weights (up to some factor $n$). Nevertheless, we cannot transform equilibrium determination into a control problem. For these results we make the additional assumption that $u_{xc}$ is nonnegative, which holds for instance in the standard setting of quasilinear (i.e., additively separable) utility in the economics literature or also for Cobb-Douglas utilities considered below.

\begin{thm}
\label{thm:eqlunique}
Let Assumption \ref{asm2} hold and suppose that all agents have the same initial wealth $w^i=w/n>0$ and the same utility function $u^i\equiv u$ satisfying Assumption \ref{asm1}. Further assume $u_{xc}\geq 0$\footnote{Mathematically, the utility function $u$ is supermodular (see \cite{Topkis}). Economically, this means that the private and the public goods are complements. Two goods are complements if the increase of a good's demand corresponds to a decrease in the price of the other good.}. Then there exists a symmetric Nash equilibrium \((\hat{x}^{1},\dots,\hat{x}^{n},\hat{C}^{1},\dots,\hat{C}^{n})\) with $(\hat x^i,\hat C^i)=(\hat x^1,\hat C^1)$ for all $i=1,\dots,n$. Furthermore, any tuple of processes \((\tilde{x}^{1},\dots,\tilde{x}^{n},\tilde{C}^{1},\dots,\tilde{C}^{n})\) is a Nash equilibrium of the public good contribution game \eqref{Vi} if and only if $(\tilde x^i,\tilde C^i)\in\mathcal{B}_{w^i}$ with $\tilde x^i=\hat x^i$ $P\otimes dt$-a.e.\ and $\Exp\bigl[\int_0^T \psi_c(t) d\tilde C^i(t)\bigr]=\Exp\bigl[\int_0^T \psi_c(t) d\hat C^i(t)\bigr]$ for all $i=1,\dots,n$, and $\sum_i\tilde C^i(t)=\sum_i\hat C^i(t)$ for all $t\in[0,T)$, $\Pb$-a.s.
\end{thm}

\begin{proof}
We first prove the second claim in five steps. Afterwards we will prove existence of a symmetric Nash equilibrium in two further steps.\vspace{0.25cm}

\emph{Step 1.}\,\,\,As preliminary, notice that in \emph{any} equilibrium \((\hat{x}^{1},\dots,\hat{x}^{n},\hat{C}^{1},\dots,\hat{C}^{n})\), every agent $i=1,\dots,n$ must exhaust his budget, $w^i=\Exp\bigl[\int_0^T \psi_x(t) \hat{x}^i(t) dt +\int_0^T\psi_c(t)\,d\hat C^i(t) \bigr]$, by the first of \eqref{FOCgame}, but not incur any costs for a final contribution, $E[\psi_c(T)\,\Delta\hat C^i(T)]=0$ (where $\Delta\hat C^i(T):=\hat C^i(T)-\hat C^i(T-)$). Indeed, since $\Delta\hat C^i(T)$ does not affect $U^{i}(\hat x^i,\hat C^i;\hat C^{-i})$ in \eqref{payoff} and $u^i$ is strictly increasing in $x$ under Assumption \ref{asm1}, if $\delta:=E[\psi_c(T)\,\Delta\hat C^i(T)]>0$, then $(\tilde x^i,\tilde C^i)=(\hat x^i+\delta/\Exp[\int_0^T \psi_x(t) dt],\hat C^i-\Delta\hat C^i(T)\indi{t=T})\in\mathcal{B}_{w^i}$ would yield $U^{i}(\tilde x^i,\tilde C^i;\hat C^{-i})>U^{i}(\hat x^i,\hat C^i;\hat C^{-i})$.
\vspace{0.25cm}

\emph{Step 2.}\,\,\,Now we show that under the assumption $u_{xc}\geq 0$, we must have symmetric Lagrange multipliers $\hat\lambda^1=\cdots=\hat\lambda^n$ in the (necessary) first order conditions from Proposition \ref{prop1game} in \emph{any} equilibrium \((\hat{x}^{1},\dots,\hat{x}^{n},\hat{C}^{1},\dots,\hat{C}^{n})\), and also symmetric private consumption \(\hat{x}^{1}=\cdots=\hat{x}^{n}\) $P\otimes dt$-a.e. 

In any equilibrium, every agent $i=1,\dots,n$ must have a Lagrange multiplier $\hat\lambda^i$ such that the first order conditions \eqref{FOC2game} hold. By the third condition for an arbitrary agent $i$ and the second condition for any other agent $j$,
\begin{align*}
&\Exp\biggl[\int_0^T \biggl( \Exp\biggl[\int_{t}^T e^{-\int_0^s r(u)\,du}\frac{1}{\hat\lambda^i}  h(\hat{\lambda}^i e^{\int_0^{s} r(u)\,du}\psi_x(s), \hat{C}(s))\,ds\biggm|\F_{t}\biggr] - \psi_c(t)\biggr)d\hat{C}^i(t)\biggr] = 0 \\
\geq{}&\Exp\biggl[\int_0^T \biggl( \Exp\biggl[\int_{t}^T e^{-\int_0^s r(u)\,du}\frac{1}{\hat\lambda^j}  h(\hat{\lambda}^j e^{\int_0^{s} r(u)\,du}\psi_x(s), \hat{C}(s))\,ds\biggm|\F_{t}\biggr] - \psi_c(t)\biggr)d\hat{C}^i(t)\biggr].
\end{align*}
This inequality can only hold if $E[\hat C^i_{T-}]=0$ (then indeed equality holds) or if $\hat \lambda^j\geq \hat \lambda^i$, because the mapping $\lambda \mapsto \frac{1}{\lambda}h(\lambda\psi,c)$ is strictly decreasing under the assumption $u_{xc}\geq 0$ by Lemma \ref{fn:h_c<0} in the appendix. Thus, the $\hat\lambda^i$ of all agents $i$ with $E[\hat C^i_{T-}]>0$ must be minimal and hence identical. Then also the private consumptions $\hat x^i(t)=g\bigl(\hat \lambda^i \psi_x(t), \hat C(t)\bigr)$ $P\otimes dt$-a.e.\ must be identical among these agents. 

Now consider an agent $j$ with $E[\hat C^j_{T-}]=0$ and an arbitrary other agent $i$. Suppose $\hat\lambda^j \geq \hat\lambda^i$ (which must hold if $E[\hat C^i_{T-}]>0$ as we have already shown, or which is without loss of generality if also $E[\hat C^i_{T-}]=0$). Then $\hat x^j(t)=g(\hat\lambda^j e^{\int_0^{t} r(s)\,ds}\psi_x(t), \hat{C}(t)) \leq g(\hat\lambda^i e^{\int_0^{t} r(s)\,ds}\psi_x(t), \hat{C}(t))=\hat x^i(t)$ $\Pb\otimes dt$-a.e.\ by the monotonicity of $g(\psi,c)$ in $\psi$; the inequality is even strict if $\hat\lambda^j > \hat\lambda^i$. 

By \emph{Step 1}, $j$ exhausts his budget on private consumption, $w^j=w/n=\Exp\bigl[\int_0^T \psi_x(t) \hat{x}^j(t) dt \bigr] \leq \Exp\bigl[\int_0^T \psi_x(t) \hat{x}^i(t) dt +\int_0^T\psi_c(t)\,d\hat C^i(t) \bigr]=w^i=w/n$. As the inequality cannot be strict, we must in fact have $\hat\lambda^j=\hat\lambda^i$, and thus also $\hat x^i(t)=g\bigl(\hat \lambda^i \psi_x(t), \hat C(t)\bigr)$ $P\otimes dt$-a.e.\ must be the same for all agents.
\vspace{0.25cm}

\emph{Step 3.}\,\,\,Consider again \emph{any} Nash equilibrium \((\hat{x}^{1},\dots,\hat{x}^{n},\hat{C}^{1},\dots,\hat{C}^{n})\), whence the necessary and sufficient first order conditions \eqref{FOCgame} hold for all agents $i=1,\dots,n$ with symmetric Lagrange multipliers $\hat\lambda^1=\cdots=\hat\lambda^n$ and also $\hat x^1=\cdots=\hat x^n$ $P\otimes dt$-a.e.\ by \emph{Step 2}. We now show that if another tuple \((\tilde{x}^{1},\dots,\tilde{x}^{n},\tilde{C}^{1},\dots,\tilde{C}^{n})\) satisfies $(\tilde x^i,\tilde C^i)\in\mathcal{B}_{w^i}$ with $\tilde x^i=\hat x^i$ $P\otimes dt$-a.e.\ and $\Exp\bigl[\int_0^T \psi_c(t) d\tilde C^i(t)\bigr]=\Exp\bigl[\int_0^T \psi_c(t) d\hat C^i(t)\bigr]$ for all $i=1,\dots,n$, and $\sum_i\tilde C^i(t)=\sum_i\hat C^i(t)$ for all $t\in[0,T)$, $\Pb$-a.s., then the first order conditions \eqref{FOCgame} also hold for \((\tilde{x}^{1},\dots,\tilde{x}^{n},\tilde{C}^{1},\dots,\tilde{C}^{n})\) with $\tilde\lambda^i=\hat\lambda^1$, $i=1,\dots,n$. This will prove sufficiency in the second claim of the theorem.

Indeed, the first condition of \eqref{FOCgame} holds by the feasibility hypothesis $(\tilde x^i,\tilde C^i)\in\mathcal{B}_{w^i}$. The second and fourth of \eqref{FOCgame} are identical to those from the initial equilibrium as $\tilde x^i=\hat x^i$ and $\tilde C=\hat C$ $P\otimes dt$-a.e., and $\tilde\lambda^i=\hat\lambda^i$ by hypothesis. If one takes the sum of the third condition of \eqref{FOCgame} over all $i=1,\dots,n$ in the initial equilibrium, where now only $d\hat C^i$ can depend on $i$, then one gets
\beq
\label{Foc:eqlbinding}
\Exp\biggl[\int_0^T  \biggl( \Exp\biggl[\int_{t}^T e^{-\int_0^s r(u)\,du}  u_c(\hat{x}^1(s), \hat{C}(s))\,ds\biggm|\F_{t}\biggr] - \hat{\lambda}^1\psi_c(t)\biggr)d\hat{C}(t)\biggr] = 0.
\eeq
As the measure $d\tilde C^i$ is absolutely continuous with respect to $d\tilde C=d\hat C$ on $[0,T)$, \eqref{Foc:eqlbinding} implies the third condition of \eqref{FOCgame} for $\tilde x^i=\hat x^1$, $\tilde C=\hat C$ ($P\otimes dt$-a.e.), $\tilde\lambda^i=\hat\lambda^1$ and $d\tilde C^i$, up to a possible terminal mass $\Delta\tilde C^i(T)$. By \emph{Step 1}, however, $E[\psi_c(T)\,\Delta\hat C^i(T)]=0$, thus $\Exp\bigl[\int_0^T \psi_c(t) d\hat C(t)\bigr]=\Exp\bigl[\int_{[0,T)} \psi_c(t) d\hat C(t)\bigr]=\Exp\bigl[\int_{[0,T)} \psi_c(t) d\tilde C(t)\bigr]$ in sum. By hypothesis, also $\Exp\bigl[\int_0^T \psi_c(t) d\hat C(t)\bigr]=\Exp\bigl[\int_0^T \psi_c(t) d\tilde C(t)\bigr]$, so $E[\psi_c(T)\,\Delta\tilde C(T)]=0$ and hence $E[\psi_c(T)\,\Delta\tilde C^i(T)]=0$, which completes the third condition of \eqref{FOCgame} for $\tilde x^i$, $\tilde C$, $\tilde\lambda^i$ and $d\tilde C^i$.

Notice that one can set in particular $\tilde C^i=\hat C/n$, $i=1,\dots,n$. Indeed, $\Exp\bigl[\int_0^T \psi_c(t) d\hat C^i(t)\bigr]=w/n-\Exp\bigl[\int_0^T \psi_x(t) \hat{x}^i(t) dt\bigr]$ are identical for all agents in equilibrium, as the $\hat x^i$ are identical by \emph{Step 2} and the budget constraints binding by \emph{Step 1}; thus, summing up, $\Exp\bigl[\int_0^T\psi_c(t)\,d\hat C(t) \bigr]=w-n\Exp\bigl[\int_0^T \psi_x(t) \hat{x}^1(t) dt\bigr]=n\Exp\bigl[\int_0^T\psi_c(t)\,d\hat C^1(t) \bigr]$ and $(\hat x^i,\hat C/n)\in\mathcal{B}_{w/n}$.
\vspace{0.25cm}

\emph{Step 4.}\,\,\,Now we can establish uniqueness of $\hat C$ on $[0,T)$ in equilibrium. Consider two equilibria with aggregate $\hat C$ and $\tilde C$ and symmetric Lagrange multipliers $\hat\lambda^1=\cdots=\hat\lambda^n=:\hat\lambda$ and $\tilde\lambda^1=\cdots=\tilde\lambda^n=:\tilde\lambda$ by \emph{Step 2}. Let $\hat\lambda\geq\tilde\lambda$ without loss of generality. Only employing the last two of \eqref{FOC2game} we now show that we then have a uniform ordering $\hat C\leq\tilde C$, $P\otimes dt$-a.e. Therefore let $\sigma_{\varepsilon}:=\inf\{t\geq 0 \mid \hat C_t>\tilde C_t+\varepsilon\}\wedge T\in\mathcal{T}$ for arbitrary $\varepsilon>0$, and $\tau_{\varepsilon}:=\inf\{t\geq \sigma_{\varepsilon} \mid \tilde C_t>\hat C_t-\varepsilon/2\}\wedge T\in\mathcal{T}$. Then $\tau_{\varepsilon}>\sigma_{\varepsilon}$ on $\{\sigma_{\varepsilon}<T\}$ by right-continuity of $\hat C-\tilde C$ and $\hat C(t)>\tilde C(t)$ on $[\sigma_\varepsilon,\tau_\varepsilon)$ (all $\Pb$-a.s.). $\sigma_\varepsilon$ and $\tau_\varepsilon$ are points of increase of the processes $\hat C$ and $\tilde C$, respectively, and hence of $\hat C^i$ and $\tilde C^j$ for some players $i,j\in\{1,\dots,n\}$.\footnote{%
$\tau\in\mathcal{T}$ is a \emph{point of increase} of the optional, non-decreasing process $\hat C$ if $\hat C(t)>\hat C(\tau-)$ for all $t\in(\tau,T]$, $\Pb$-a.s.
}
Therefore, by the third condition of \eqref{FOC2game}, the second condition of \eqref{FOC2game} for $\hat C$, $\hat\lambda$ must be binding at $\tau=\sigma_\varepsilon$ on $\{\sigma_\varepsilon<T\}$ (see Proposition 3.2 of \cite{Steg12} for a proof). Then, on $\{\sigma_\varepsilon<T\}$ we obtain the following chain (up to a $\Pb$-nullset):
\begin{eqnarray*}
&&\Exp\biggl[\int_{\sigma_{\varepsilon}}^T e^{-\int_0^t r(s)\,ds} \frac{1}{\hat\lambda} h(\hat\lambda e^{\int_0^{t} r(s)\,ds}\psi_x(t), \hat{C}(t))\,dt \biggm|\F_{\sigma_{\varepsilon}}\biggr]  =  \psi_c(\sigma_{\varepsilon}) \nonumber \\
&&\geq \Exp\biggl[\int_{\sigma_{\varepsilon}}^T e^{-\int_0^t r(s)\,ds} \frac{1}{\tilde\lambda} h(\tilde\lambda e^{\int_0^{t} r(s)\,ds}\psi_x(t), \tilde{C}(t))\,dt \biggm|\F_{\sigma_{\varepsilon}}\biggr] \nonumber \\
&&>\Exp\biggl[\int_{\sigma_{\varepsilon}}^{\tau_{\varepsilon}} e^{-\int_0^t r(s)\,ds} \frac{1}{\tilde\lambda} h(\tilde\lambda e^{\int_0^{t} r(s)\,ds}\psi_x(t), \hat{C}(t))\,dt\biggm|\F_{\sigma_{\varepsilon}}\biggr]\nonumber 
\end{eqnarray*}
\begin{eqnarray}
\label{long}
&&\hspace{0.5cm}+\Exp\biggl[\int_{\tau_{\varepsilon}}^T e^{-\int_0^t r(s)\,ds} \frac{1}{\tilde\lambda} h(\tilde\lambda e^{\int_0^{t} r(s)\,ds}\psi_x(t), \tilde{C}(t))\,dt \biggm|\F_{\sigma_{\varepsilon}}\biggr] \nonumber \\
&&=\Exp\biggl[\int_{\sigma_{\varepsilon}}^{\tau_{\varepsilon}} e^{-\int_0^t r(s)\,ds} \frac{1}{\tilde\lambda} h(\tilde\lambda e^{\int_0^{t} r(s)\,ds}\psi_x(t), \hat{C}(t))\,dt \biggm|\F_{\sigma_{\varepsilon}}\biggr]+\Exp\bigl[\psi_c(\tau_{\varepsilon}) \bigm|\F_{\sigma_{\varepsilon}}\bigr] \nonumber \\
&&\geq\Exp\biggl[\int_{\sigma_{\varepsilon}}^{\tau_{\varepsilon}} e^{-\int_0^t r(s)\,ds} \frac{1}{\tilde\lambda} h(\tilde\lambda e^{\int_0^{t} r(s)\,ds}\psi_x(t), \hat{C}(t))\,dt \biggm|\F_{\sigma_{\varepsilon}}\biggr]  \\
&&\hspace{0.5cm}+\Exp\biggl[\int_{\tau_{\varepsilon}}^T e^{-\int_0^t r(s)\,ds} \frac{1}{\hat\lambda} h(\hat\lambda e^{\int_0^{t} r(s)\,ds}\psi_x(t), \hat{C}(t))\,dt \biggm|\F_{\sigma_{\varepsilon}}\biggr] \nonumber \\
&&\geq\Exp\biggl[\int_{\sigma_{\varepsilon}}^{T} e^{-\int_0^t r(s)\,ds} \frac{1}{\hat\lambda} h(\hat\lambda e^{\int_0^{t} r(s)\,ds}\psi_x(t), \hat{C}(t))\,dt \biggm|\F_{\sigma_{\varepsilon}}\biggr].\nonumber 
\end{eqnarray}
Here the first inequality is the second condition of \eqref{FOC2game} for $\tilde C$, $\tilde\lambda$; the strict inequality holds by the strict monotonicity of $h$ in $c$ and $\hat C(t)>\tilde C(t)$ on $[\sigma_\varepsilon,\tau_\varepsilon)$, which is non-empty on $\{\sigma_\varepsilon<T\}$; the equality is from the second condition of \eqref{FOC2game} for $\tilde C$, $\tilde\lambda$ being binding at $\tau=\tau_\varepsilon$ on $\{\tau_\varepsilon<T\}$ (since $\tau_\varepsilon$ is a point of increase of some $\tilde C^j$, $j=1,\dots,n$), using iterated expectations; the next inequality is the second condition of \eqref{FOC2game} for $\hat C$, $\hat\lambda$; the last inequality is from $\lambda \mapsto \frac{1}{\lambda}h(\lambda\psi,c)$ being strictly decreasing (see again Lemma \ref{fn:h_c<0} in the appendix) and $\hat\lambda\geq\tilde\lambda$. To avoid a contradiction in \eqref{long}, we must have $\sigma_\varepsilon=T$ $\Pb$-a.s.\ for all $\varepsilon>0$, and thus $\hat C\leq\tilde C$ on $[0,T)$, $\Pb$-a.s.

Notice now that with $u_{xc}\geq 0$, $g(\psi,c)$ is nondecreasing in $c$ (and strictly decreasing in $\psi$). Thus, given the ordering $\hat C\leq\tilde C$ ($P\otimes dt$-a.e.) and $\hat\lambda\geq\tilde\lambda$, we obtain $\hat x^i(t)=g\bigl(\hat\lambda \psi_x(t),\hat C(t)\bigr)\leq\tilde x^i(t)=g\bigl(\tilde\lambda \psi_x(t),\tilde C(t)\bigr)$. We may assume that the first hypothesized equilibrium is symmetric, $\hat C^i=\frac1n\hat C$, by \emph{Step 3}. In that equilibrium, any player, say $i=1$, now has a feasible deviation by following $\frac1n\tilde C\geq\frac1n\hat C$, which costs $\Exp\bigl[\int_0^T\psi_c(t)\,d\tilde C^i(t) \bigr]\leq w/n-\Exp\bigl[\int_0^T \psi_x(t) \tilde{x}^i(t) dt\bigr]\leq w/n-\Exp\bigl[\int_0^T \psi_x(t) \hat{x}^i(t) dt\bigr]$. If $\frac1n\tilde C>\frac1n\hat C$ with positive measure, the former would be strictly better since $u^i$ is strictly increasing in $c$, so we must have $\tilde C=\hat C$ $\Pb\otimes dt$ a.e.\ if the latter is indeed an equilibrium. By right-continuity, then even $\hat C(t)=\tilde C(t)$ for all $t\in[0,T)$, $\Pb$-a.s.
\vspace{0.25cm}

\emph{Step 5.}\,\,\,To complete the proof of necessity in the second claim of the theorem, it remains to prove uniqueness of the private consumption and of the costs of individual contributions in equilibrium. Consider two Nash equilibria \((\hat{x}^{1},\dots,\hat{x}^{n},\hat{C}^{1},\dots,\hat{C}^{n})\) and \((\tilde{x}^{1},\dots,\tilde{x}^{n},\tilde{C}^{1},\dots,\tilde{C}^{n})\) with respective symmetric Lagrange multipliers $\hat\lambda^1=\cdots=\hat\lambda^n=:\hat\lambda$ and $\tilde\lambda^1=\cdots=\tilde\lambda^n=:\tilde\lambda$ by \emph{Step 2}. Let $\hat\lambda\geq\tilde\lambda$ without loss of generality. By \emph{Step 3}, $\hat C=\tilde C$ $\Pb\otimes dt$ a.e. Then the corresponding private consumptions are $\hat x^i(t)=g\bigl(\hat \lambda\psi_x(t), \hat C(t)\bigr)\leq\tilde x^i(t)=g\bigl(\tilde\lambda\psi_x(t), \tilde C(t)\bigr)$ $\Pb\otimes dt$ a.e.\ by the monotonicity of $g(\psi,c)$ in $\psi$, strictly if $\hat\lambda>\tilde\lambda$. However, by \emph{Step 1}, $\sum_i\Exp\bigl[\int_0^T \psi_x(t) \hat{x}^i(t) dt\bigr]=w-\Exp\bigl[\int_0^T\psi_c(t)\,d\hat C(t) \bigr]=w-\Exp\bigl[\int_0^T\psi_c(t)\,d\tilde C(t) \bigr]=\sum_i\Exp\bigl[\int_0^T \psi_x(t) \tilde{x}^i(t) dt\bigr]$, so we must have $\hat\lambda=\tilde\lambda$ and thus $\hat x^i=\tilde x^i$, $\Pb\otimes dt$ a.e. Then, again by \emph{Step 1}, $\Exp\bigl[\int_0^T\psi_c(t)\,d\hat C^i(t) \bigr]=w^i-\Exp\bigl[\int_0^T \psi_x(t) \hat{x}^i(t) dt\bigr]=w^i-\Exp\bigl[\int_0^T \psi_x(t) \tilde{x}^i(t) dt\bigr]=\Exp\bigl[\int_0^T\psi_c(t)\,d\tilde C^i(t) \bigr]$.
\vspace{0.25cm}

Finally, in the next two steps we establish existence of a symmetric equilibrium. First, we argue that one can take the solution of an auxiliary single-agent problem with any given budget $w'$ to construct a symmetric $n$-agent equilibrium with some resulting budget $\bar w$. To find a symmetric  equilibrium for a given budget $w$, we argue afterwards that one can chose the initial single-agent budget $w'$ to match $\bar w=w$.\vspace{0.25cm}

\emph{Step 6.}\,\,\,$n$ is our given number of symmetric agents. Consider now the auxiliary single-agent problem with $n'=1$ and utility function $u$ of the symmetric agents and arbitrary budget $w'>0$. This problem has a solution $(x'_*,C'_*)$ by Theorem \ref{SPexistence} (with $\gamma^1=1$) and a Lagrange multiplier $\lambda'_*$ by necessity in Proposition \ref{prop1SP} such that the first order conditions \eqref{FOC2} are satisfied. The latter differ from the first order conditions \eqref{FOC2game} in a symmetric $n$-agent Nash equilibrium only by the budget constraint, where we have $dC'_*$ for the single agent and $d\hat C/n$ in the symmetric equilibrium (the factor $1/n$ does not affect the respective third condition). If we now define $\hat C^i:=C'_*/n$ (so that $\hat C \equiv C'_*$) and $\hat\lambda^i:=\lambda'_*$ (whence $\hat x^i=x'_*$ via the function $g$), then we have a symmetric solution to the equilibrium first order conditions \eqref{FOC2game} with individual budgets
\begin{align*}
w^i \equiv \frac{\bar w}{n}:={}&\Exp\biggl[\int_0^T \psi_x(t) g(\hat{\lambda} e^{\int_0^{t} r(s)\,ds}\psi_x(t), \hat{C}(t))\, dt + \int_0^T  \psi_c(t)\frac1n d\hat{C}(t)\biggr].
%\\
%={}&w'-\frac{n-1}{n}\Exp\biggl[\int_0^T  \psi_c(t)\,d\hat{C}(t)\biggr]
\end{align*} 
and hence a symmetric equilibrium for the aggregate budget $\bar w$. Notice that simple algebra implies
\begin{equation}
\Exp\biggl[\int_0^T  \psi_c(t)\frac1n d\hat{C}(t)\biggr] = \Exp\biggl[\int_0^T \psi_c(t) d\hat{C}(t)\biggr] - \frac{n-1}{n}\Exp\biggl[\int_0^T \psi_c(t) d\hat{C}(t)\biggr]
\end{equation}
and therefore we can rewrite
\begin{equation}
\label{rewritewbar}
\frac{\bar w}{n}=w'-\frac{n-1}{n}\Exp\biggl[\int_0^T \psi_c(t)\,d\hat{C}(t)\biggr],
\end{equation}
upon recalling $\hat C \equiv C'_*$ and using the budget constraint of the auxiliary single agent with initial wealth $w'$.
\vspace{0.25cm}

\emph{Step 7.}\,\,\,We now have to find an initial $w'$ such that the construction in \emph{Step 6} yields $\bar w$ equal to our given $w$. First of all, by noticing that $0 \leq \Exp[\int_0^T \psi_c(t)\,d\hat{C}(t)] \leq w'$ since $\hat C \equiv C'_*$, it follows from \eqref{rewritewbar} that $\bar w\in[w',nw']$ and therefore the candidates $w'=w/n$ and $w'=w$ yield symmetric equilibria with budgets enclosing $w$. Thus it remains to show that $\bar w$ depends continuously on $w'$ to complete the proof. 

As the first two elements of any solution $(x'_*,C'_*,\lambda'_*)$ of the first order conditions \eqref{FOC2} for the auxiliary single-agent problem are $P\otimes dt$-essentially unique, they can be understood as a function of $w'$, and thus also $\bar w$ is a function of $w'$ by \eqref{rewritewbar} with $\hat C \equiv C'_*$ and $E[\psi_c(T)\,\Delta\hat C(T)]=0$ by \emph{Step 1}.

Now we show that $(x'_*,C'_*)$ is non-decreasing in $w'$. Therefore consider a second single-agent budget $w''>0$ and corresponding solution $(x''_*,C''_*,\lambda''_*)$ of \eqref{FOC2}. Suppose $\lambda'_*\leq\lambda''_*$. We have shown in the first part of \emph{Step 4}~-- which applies also to the particular case $n'=1$, when Nash equilibrium and social planner solution become the same concept, and further to two equilibria with different budgets, since only the last two conditions of \eqref{FOC2game} were used~--  that under the assumption $u_{xc}\geq 0$, we must then have $C''_*\leq C'_*$, $P\otimes dt$-a.e. Further, as $g(\psi,c)$ is strictly decreasing in $\psi$ and now $g_c(\psi,c)=-u_{xc}(g(\psi,c),c))/u_{xx}(g(\psi,c),c)\geq 0$, also $x''_*(t)=g\bigl(\lambda''_* \psi_x(t), C''_*(t)\bigr)\leq g\bigl(\lambda'_* \psi_x(t), C'_*(t)\bigr)=x'_*(t)$ $P\otimes dt$-a.e., strictly if $\lambda'_*<\lambda''_*$. Due to the monotonicity of $u$ in $(x,c)$, now the maximized utilities given in \eqref{SPproblem} must satisfy $V_{SP}(w'')\leq V_{SP}(w')$, strictly if $\lambda'_*<\lambda''_*$. By switching roles, we get the opposite inequalities throughout if $\lambda'_*\geq\lambda''_*$.

On the other hand, if $w''\geq w'$, then
%the strict monotonicity of $u$ in $x$ and $(x'',C'')=(x'_*+(w''-w')/\Exp[\int_0^T \psi_x(t) dt],C'_*)\in\mathcal{B}_{w''}$ would imply $V_{SP}(w')=U(x'_*,C'_*;0)\leq U(x'',C'';0)\leq V_{SP}(w'')$, with the first inequality strict if  $w''>w'$. 
$\mathcal{B}_{w'}\subset\mathcal{B}_{w''}$ and thus $V_{SP}(w'')\geq V_{SP}(w')$. In combination with the contraposition of the previously established implication $\lambda'_*<\lambda''_*\Rightarrow V_{SP}(w'')< V_{SP}(w')$, we obtain $w''\geq w'\Rightarrow V_{SP}(w'')\geq V_{SP}(w')\Rightarrow\lambda'_*\geq\lambda''_*\Rightarrow C'_*\leq C''_*$ and $x'_*\leq x''_*$, $P\otimes dt$-a.e.
%$\lambda'_*\leq\lambda''_*\Rightarrow w''\leq w'$ and 
%$w''>w'\Rightarrow \lambda'_*>\lambda''_*$ and 
%(the latter further yields $w''=w'\Rightarrow \lambda'_*=\lambda''_*$ by switching roles)

Now consider a monotone sequence of single-agent budgets $\{w'_k\}_{k\in \mathbb{N}}$ converging to some $w'>0$, with the associated monotone sequences $\{C'_{*k}\}_{k\in \mathbb{N}}$ and $\{x'_{*k}\}_{k\in \mathbb{N}}$ (and $\{\lambda'_{*k}\}_{k\in \mathbb{N}}$). Then
\begin{align*}
\lim_{k\to\infty}w'_k&=\lim_{k\to\infty}\Exp\biggl[\int_0^T \psi_x(t) x'_{*k}(t)\, dt + \int_0^T  \psi_c(t) dC'_{*k}(t)\biggr]\\
&=w'=\Exp\biggl[\int_0^T \psi_x(t) x'_*(t)\, dt + \int_0^T  \psi_c(t) dC'_*(t)\biggr].
\end{align*}
Here both integrals are non-decreasing in $k$ in expectation, the first clearly by monotonicity of $\{x'_{*k}\}_{k\in \mathbb{N}}$. Further, $C'_{*k}\leq C'_{*k+1}$ $\Pb\otimes dt$ a.e.\ must also imply $\delta_k:=\Exp[\int_0^T \psi_c(t) dC'_{*k}(t)]-\Exp[\int_0^T  \psi_c(t) dC'_{*k+1}(t)]\leq 0$, because if $\delta_k>0$, then $(x'_k,C'_k):=(x'_{*k}+\delta_k/\Exp[\int_0^T \psi_x(t) dt],C'_{*k+1})\in\mathcal{B}_{w'_k}$ would yield $U^{1}(x'_k,C'_k)>U^{1}(x'_{*k},C'_{*k})$ since $u$ is strictly increasing in $(x,c)$. By monotonicity, both integrals must converge in expectation. Thus the associated sequence
\[
\frac{\bar w_k-\bar w}{n}=w'_k-w'-\frac{n-1}{n}\Exp\biggl[\int_0^T \psi_c(t) \Big(dC'_{*k}(t)-dC'_*(t)\Big)\biggr]
\]
from the construction in \emph{Step 6} must vanish as $k\to\infty$. In summary, $\bar w$ depends continuously on $w'$, such that we can indeed find an initial $w'$ yielding $\bar w=w$.\vspace{0.25cm}
\end{proof}

Since the first order conditions in a symmetric equilibrium are the same for all agents and of a similar form as those for the social planner, we can ``solve'' them in form of a signal process $\hat l$ as in Theorem \ref{SPoptpolicy}. The proof uses analogous arguments as that of Theorem \ref{SPoptpolicy} and is therefore omitted.

\begin{thm}
\label{SymmetricNashoptimalpolicy}
Let Assumption \ref{asm2} hold. Also, assume that all the agents have the same initial wealth $w^i=w/n>0$ and the same utility function $u^i\equiv u$ satisfying Assumption \ref{asm1}%{\color{red}, and suppose that $h(\psi,c):=u_c((u_x)^{-1}(\psi,c),c)$ satisfies $\lim_{c \downarrow 0}h(\psi,c)=+\infty$ and $\lim_{c \uparrow \infty}h(\psi,c)=0$}
. Then there is a Nash equilibrium of the public good contribution game \eqref{Vi} given by 
\begin{equation}
\label{optimalNashsymm}
\left\{
\begin{array}{ll}
\hat C^i(t)=\sup_{0 \leq u \leq t}\hat l(u) \vee 0, \\ \\
\hat x^i(t)=g\bigl(\hat{\lambda} \psi_x(t), \hat C(t)\bigr)
\end{array}
\right.
\end{equation}
for all $i=1,\dots,n$ if $\hat l$ is an optional, upper right-continuous process solving the stochastic backward equation
\begin{equation}
\label{Nashbacksymm}
\Exp\biggl[\int_{\tau}^T e^{-\int_0^t r(s)\,ds} h\Big(\hat{\lambda} e^{\int_0^{t} r(u)du }\psi_x(t), n \sup_{\tau \leq u \leq t} l(u)\Big)\,dt \biggm|\F_{\tau}\biggr] = \hat{\lambda}\psi_c(\tau)\mathds{1}_{\{\tau < T\}} 
\end{equation}
for any stopping time $\tau \in [0,T]$ with $\hat l_T=0$, \(\Pb\)-a.s., for a Lagrange multiplier $\hat{\lambda}>0$ such that $\Psi(\hat x^i,\hat C^i)=w/n$.
\end{thm}
\begin{rem}
Notice that if the Inada conditions from Proposition \ref{prop:existenceback} hold for $h$, then we have also necessity in Theorem \ref{SymmetricNashoptimalpolicy} for the symmetric equilibrium identified in Theorem \ref{thm:eqlunique}.
\end{rem}

\section{The Free Rider Effect}
\label{freerider}

In economics, the free rider problem occurs when those who benefit from resources, goods, or services do not pay (sufficiently) for them, which results in either an under-provision of those goods or services, or in an overuse or degradation of a common property resource. The free rider problem is common among public goods, because of their \textsl{non-excludability} -- once provided it is for everybody -- and \textsl{non-rivalry} -- the consumption of the good by an agent does not reduce the amount available to others. We refer to \cite{CornesSandler96} or \cite{Laffont88} for more details.

The next proposition shows that in a symmetric setting, the total expenditure for the public good is higher if one follows the symmetric social planner's optimal policy than the symmetric Nash equilibrium.
\begin{prop}
\label{prop:expenditure}
Under the assumptions of Theorem \ref{thm:eqlunique}, let $(x_*^i, C_*^i) \equiv (\frac{1}{n} x_*, \frac{1}{n}C_*)$, $i=1,\dots,n$, represent the symmetric social planner solution of Theorem \ref{SPoptpolicy} with $\gamma^i=1/n$, $n>1$, and $(\hat x^i, \hat C^i) \equiv (\frac{1}{n} \hat x, \frac{1}{n}\hat C)$, $i=1,\dots,n$, represent the symmetric Nash equilibrium of Theorem \ref{thm:eqlunique}.
If $\Exp[C_*(T)]>0$, then
$$\Exp\biggl[ \int_0^T \psi_c(t)(dC_*(t) - d\hat{C}(t))\biggr] > 0;$$
that is, the total expenditure for the public good is higher for the social planner.
\end{prop}
\begin{proof}
If $\Exp[C_*(T)]>0$, then the social planner solution cannot be a Nash equilibrium by comparing the first order conditions \eqref{FOC2} and \eqref{FOC2game} (here with $\gamma^i=1/n$, $n>1$). Indeed, if it were, then by the budget constraint we would need to have $\lambda_*/\gamma^i=\hat\lambda^i$, whence the respective third conditions could not be reconciled if $dC_*>0$ with positive measure. 
By uniqueness of the social planner optimal policy (cf.\ Theorem \ref{SPexistence}), the symmetric social planner solution $(\frac{1}{n}x_*, \frac{1}{n}C_*)$ now gives everybody a higher expected utility than the symmetric equilibrium $(\frac{1}{n}\hat x, \frac{1}{n}\hat C)$. Hence
\begin{eqnarray}
\label{expenditure1}
&& 0 <  \Exp\biggl[\int_0^T e^{-\int_0^t r(s)\,ds} \left[u\Big(\frac{1}{n}x_*(t),C_*(t)\Big)- u\Big(\frac{1}{n}\hat{x}(t),\hat C(t)\Big)\right] dt\biggr] \nonumber \\
&& \leq \Exp\biggl[\int_0^T e^{-\int_0^t r(s)\,ds} u_x\Big(\frac{1}{n}\hat{x}(t), \hat{C}(t)\Big)\frac{1}{n}(x_*(t) - \hat{x}(t))\,dt\biggr] \nonumber \\
&& +\, \Exp\biggl[\int_{0}^T e^{-\int_0^t r(s)\,ds} u_c\Big(\frac{1}{n}\hat{x}(t), \hat{C}(t)\Big)(C_*(t) - \hat{C}(t))\,dt\biggr]   \\
&& = \Exp\biggl[\int_0^T e^{-\int_0^t r(s)\,ds} u_x\Big(\frac{1}{n}\hat{x}(t), \hat{C}(t)\Big)\frac{1}{n}(x_*(t) - \hat{x}(t))\,dt\biggr] \nonumber \\
&& +\, \Exp\biggl[\int_0^T \Exp\biggl[\int_{t}^T e^{-\int_0^s r(u)\,du}  u_c\Big(\frac{1}{n}\hat{x}(s), \hat{C}(s)\Big)\,ds\biggm|\F_{t}\biggr](dC_*(t) - d\hat{C}(t))\biggr] \nonumber 
\end{eqnarray}
where concavity of $u$ has been used to derive the second inequality, whereas an application of Fubini's Theorem and Theorem 1.33 in \cite{Jacod79} justify the last step (see also Remark \ref{rem:supergrad}).
Employing now the first order conditions \eqref{FOCgame} in the last two expected values in \eqref{expenditure1} and noting that $\hat\lambda^i>0$ we get
\begin{equation}
\label{expenditure2}
\Exp\biggl[\int_0^T \frac{1}{n}\psi_x(t)(x_*(t) - \hat{x}(t)) dt + \int_0^T \psi_c(t)(dC_*(t) - d\hat{C}(t))\biggr] > 0.
\end{equation}
On the other hand, subtracting the budget constraint associated to the symmetric social planner solution (i.e.\ the first of \eqref{FOC}) and the aggregated budget constraint associated to the symmetric Nash equilibrium (i.e.\ the sum over all the agents of the first in \eqref{FOCgame}) one easily obtains
\begin{equation}
\label{expenditure3}
\Exp\biggl[\int_0^T \psi_x(t)(x_*(t) - \hat{x}(t))dt + \int_0^T \psi_c(t)(dC_*(t) - d\hat{C}(t))\biggr] =0.
\end{equation}
Therefore, by \eqref{expenditure2} and \eqref{expenditure3} one finds
$$\Exp\biggl[\int_0^T \psi_x(t)(x_*(t) - \hat{x}(t))dt\biggr] < 0 \qquad \text{and} \qquad \Exp\biggl[ \int_0^T \psi_c(t)(dC_*(t) - d\hat{C}(t))\biggr] > 0;$$
that is, the total expenditure for the public good is higher for the social planner.
\end{proof}

In order to find if our model exhibits a (dynamic) free rider effect one should be able to compare the efficient social planner solution $C_*$ with a Nash equilibrium outcome of the public good contribution game, $\hat{C}$, so to show $C_* \geq \hat{C}$ $\Pb \otimes dt$-a.e. In the generality of our model that seems a difficult task, even in a symmetric setting in which all the economic agents share the same utility function and the same wealth. Indeed, a comparison of the symmetric Nash equilibrium of \eqref{optimalNashsymm} with the social planner solution \eqref{optimalSP} seems possible only if a ranking between the Lagrange multipliers $\lambda_*$ and $\hat{\lambda}$ is known. Unfortunately, we have not been able to determine such a ranking in the general framework.

We have thus specified the setting, so to perform direct calculations and explicitly evaluate the free rider effect. Our chosen model satisfies the following assumption which shall hold throughout the remainder of this section.
\begin{asm}
\label{ass:freerider}
\indent\par
\begin{enumerate}%\hspace{10cm}
	\item $u^i(x,c) = \frac{x^{\alpha}c^{\beta}}{\alpha + \beta}$, $i=1,\dots,n$, for some $\alpha, \beta \in (0,1)$ such that $\alpha + \beta < 1$;
	\item $w^i = w_o$ for all $i=1,\dots,n$;
	\item $\psi_x(t)=e^{-rt} \mathcal{E}_x(t) = e^{-rt + Z_x(t) -\pi_x(1)t}$ and $\psi_c(t)=e^{-rt} \mathcal{E}_c(t) = e^{-rt + Z_c(t) -\pi_c(1)t}$ for some independent L\'evy processes $Z_x$ and $Z_c$ such that the Laplace transforms 
\[
\pi_x(\xi):=\log\Exp[e^{\xi Z_x(1)}] \quad\text{and}\quad \pi_c(\xi):=\log\Exp[e^{\xi Z_c(1)}],
\] 
are well defined for all $\xi \in \mathbb{R}$. 
\end{enumerate}
\end{asm}
We refer the reader to \cite{Bertoin96}, among others, for a detailed introduction to L\'evy processes. Notice that uncertainty in our model is still much more general than the Brownian one commonly assumed in the literature. Indeed we drop the assumption of normally distributed increments, while keeping a convenient Markovian structure. Our setup also covers the case of jump processes, like the Poisson process, or that of jump-diffusion processes, as well as the deterministic case. 

The next lemma will be useful in the following
\begin{lem}
\label{newLevy}
Let Assumption \ref{ass:freerider} hold and recall the measure $\tilde{\Pb}_c$ such that $\frac{d\tilde{\Pb}_c}{d\Pb}=\mathcal{E}_c(T)$ on $\F_T$. The processes 
$$\hat{Z}(t):=-\frac{\beta}{1-\alpha - \beta}(Z_c(t)-\pi_c(1)t) - \frac{\alpha\beta}{(1-\alpha)(1-\alpha - \beta)} (Z_x(t)-\pi_x(1)t)$$ and $$\widetilde{Z}(t):=-\frac{\alpha}{1-\alpha-\beta}(Z_x(t)-\pi_x(1) t) - \frac{1-\alpha}{1-\alpha-\beta}(Z_c(t)-\pi_c(1)t)$$
are L\'evy. Moreover, 
\begin{enumerate}
	\item the Laplace exponent under $\Pb$ of $\hat{Z}$ at $\xi=2$ is given by
\begin{eqnarray*}
\hat{\pi}(2)&\hspace{-0.25cm}:= \hspace{-0.25cm}&\log\Exp[e^{2\hat{Z}(1)}] \\
&\hspace{-0.25cm} = \hspace{-0.25cm}& \frac{2}{1-\alpha-\beta}\Big[\beta\pi_c(1) + \frac{\alpha\beta}{1-\alpha}\pi_x(1)\Big] + \pi_c\left(-\frac{2\beta}{1-\alpha-\beta}\right) + \pi_x\left(-\frac{2\alpha\beta}{(1-\alpha)(1-\alpha-\beta)}\right);
\end{eqnarray*}
\item the Laplace exponent under $\tilde{\Pb}_c$ of $\widetilde{Z}$ at $\xi=1$ is given by 
\begin{eqnarray*}
\widetilde{\pi}(1)&\hspace{-0.25cm}:= \hspace{-0.25cm}&\log\widetilde{\Exp}_c[e^{\widetilde{Z}(1)}]\\
&\hspace{-0.25cm}= \hspace{-0.25cm}&\frac{1}{1-\alpha-\beta}\Big[\beta\pi_c(1) + \alpha\pi_x(1)\Big] + \pi_c\left(-\frac{\beta}{1-\alpha-\beta}\right) + \pi_x\left(-\frac{\alpha}{1-\alpha-\beta}\right).
\end{eqnarray*}
\end{enumerate}
\end{lem}
\begin{proof}
The processes $\hat{Z}$ and $\widetilde{Z}$ are L\'evy because they are linear combinations of independent L\'evy processes.
The expression of the Laplace exponent $\hat{\pi}(2)$ can be easily obtained exploiting independence of $Z_x$ and $Z_c$. On the other hand, recalling that $\frac{d\tilde{\Pb}_c}{d\Pb}=\mathcal{E}_c(1)$ on $\F_1$ and using again independence of $Z_x$ and $Z_c$, $\widetilde{\pi}(1)$ follows.
\end{proof}

We now make the following assumption to ensure finiteness of relevant quantities in the following.
\begin{asm}
\label{ass:rbig}
$r>\max\Big\{0,\frac{\alpha}{1-\alpha}\pi_x(1), \frac{\alpha}{1-\alpha}\pi_x(1) + \pi_x(-\frac{2\alpha}{1-\alpha}), \hat{\pi}(2) + \frac{\alpha}{1-\alpha}\pi_x(1), \widetilde{\pi}(1)\Big\}$.
\end{asm}
Assumptions \ref{ass:freerider} and \ref{ass:rbig} will be kept throughout this section. In the next two propositions we explicitly solve the social planner problem and we find the explicit form of the symmetric Nash equilibrium of Theorem \ref{SymmetricNashoptimalpolicy}.

\begin{prop}
\label{solsymmetric}
Assume that the social planner weights are $\gamma^i= \frac{1}{n}$ for every $i=1,\dots,n$ and define the processes
\begin{equation}
\label{procgamma}
\gamma(t):=\frac{1}{A}\Big[\left(\frac{\alpha + \beta}{\alpha}\right)\mathcal{E}_x(t) \inf_{0 \leq s \leq t} \left(\mathcal{E}_c^{\frac{\beta(1 - \alpha)}{1 - \alpha - \beta}}(s)\mathcal{E}_x^{\frac{\alpha\beta}{1 - \alpha - \beta}}(s)\right)\Big]^{-\frac{1}{1 - \alpha}},
\end{equation}
\begin{equation}
\label{proctheta}
\theta(t):=\sup_{0 \leq s \leq t}\left(\mathcal{E}_c^{-\frac{1 - \alpha}{1 - \alpha - \beta}}(s) \mathcal{E}_x^{-\frac{\alpha}{1 - \alpha - \beta}}(s) \right),
\end{equation}
and the constants
\begin{equation}
\label{lzero}
l_0:=\frac{nw_o}{\displaystyle \Exp\biggl[\int_0^{\infty} \psi_x(t) \gamma(t) dt  + \int_{0}^{\infty}\psi_c(t)d\theta(t)\biggr]}
\end{equation}
and
\begin{equation}
\label{A}
A:=\Exp\biggl[\int_{0}^{\infty}\delta e^{- r u} \inf_{0 \leq s \leq u} \left(\mathcal{E}_c(s) \mathcal{E}_x^{-\frac{\alpha}{1 - \alpha}}(u-s)\right)du\biggr]
\end{equation}
with $\delta:=\frac{\beta}{\alpha}\left(\frac{\alpha + \beta}{\alpha}\right)^{\frac{1}{\alpha-1}}$.

Then the social planner's optimal solution is such that
%$l_0$ and $A$ are finite
%\footnote{Notice that $l_0$ of (\ref{lzero}) and $A$ of (\ref{A}) are finite under some further specifications on the discount factor as it is shown in the proof of Proposition \ref{freeridereffectprop} in Section \ref{freerider} below.}, the social planner's optimal solution is such that
\begin{equation}
\label{SPpublic}
C_{*}(t)= l_0 \theta(t)
\end{equation}
and
\begin{equation}
\label{SPprivate}
x^{i}_{*}(t) = \frac{1}{n} l_0 \gamma(t), \quad i=1,\dots,n,
\end{equation}
with
\begin{equation*}%\label{SPlambda}
\lambda_* = \frac{1}{n^{\alpha}} A^{1-\alpha} l_0^{\alpha + \beta - 1}.
\end{equation*}
\end{prop}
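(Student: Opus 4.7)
The strategy is to apply Theorem~\ref{SPoptpolicy} constructively, by exhibiting and verifying an explicit signal process $l^{*}$. A direct inversion of $u_x^i(x,c)=\tfrac{\alpha}{\alpha+\beta}x^{\alpha-1}c^{\beta}$ gives $g^i(\psi,c)=\bigl(\tfrac{\alpha}{(\alpha+\beta)\psi}\bigr)^{1/(1-\alpha)}c^{\beta/(1-\alpha)}$ and consequently $h^i(\psi,c)=\delta\,\psi^{-\alpha/(1-\alpha)}c^{-(1-\alpha-\beta)/(1-\alpha)}$ with $\delta$ as in \eqref{A}. Since $\gamma^i=1/n$, $u^i$ and $w^i$ agree across agents and $r$ is a constant, the Bank--El Karoui equation \eqref{SPback} collapses to a scalar backward equation. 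The natural ansatz, suggested by the homogeneity of $h$, is
\begin{equation*}
l^{*}(t):=l_0\,\mathcal{E}_c(t)^{-(1-\alpha)/(1-\alpha-\beta)}\mathcal{E}_x(t)^{-\alpha/(1-\alpha-\beta)},
\end{equation*}
for which $(\sup_{0\le u\le t}l^{*}(u))\vee 0=l_0\theta(t)$, since $l^{*}(0)=l_0>0$, giving the desired $C_{*}(t)=l_0\theta(t)$.

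The central step is to plug $l^{*}$ into \eqref{SPback} and verify the identity. Applying the exponent identity $-\tfrac{1-\alpha-\beta}{1-\alpha}\cdot-\tfrac{1-\alpha}{1-\alpha-\beta}=1$ (and the analogous one for the $\mathcal{E}_x$-exponent) converts the running supremum into $l_0^{-(1-\alpha-\beta)/(1-\alpha)}\inf_{\tau\le u\le t}\mathcal{E}_c(u)\mathcal{E}_x(u)^{\alpha/(1-\alpha)}$. Using the L\'evy decomposition $\mathcal{E}_c(u)=\mathcal{E}_c(\tau)\tilde{\mathcal{E}}_c(u-\tau)$, $\mathcal{E}_x(u)=\mathcal{E}_x(\tau)\tilde{\mathcal{E}}_x(u-\tau)$ with tildes independent of $\F_{\tau}$ and distributionally identical to the original processes, the factors $\mathcal{E}_x(\tau)^{\alpha/(1-\alpha)}$ inside the infimum cancel against the $\mathcal{E}_x(t)^{-\alpha/(1-\alpha)}$ term outside, leaving $\mathcal{E}_c(\tau)$ as the only $\F_{\tau}$-measurable factor. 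A joint L\'evy time-reversal on $[0,t-\tau]$ then identifies the residual expectation with the integrand of \eqref{A}; Fubini and integration in $t$ produce the constant $A$. The backward equation reduces to $(n\lambda)^{-\alpha/(1-\alpha)}l_0^{-(1-\alpha-\beta)/(1-\alpha)}A\,e^{-r\tau}\mathcal{E}_c(\tau)=\lambda\,e^{-r\tau}\mathcal{E}_c(\tau)$, which holds for every $\tau$ precisely when $\lambda=n^{-\alpha}A^{1-\alpha}l_0^{\alpha+\beta-1}$.

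The optimal private consumption follows from the binding Inada FOC, $x^{i}_{*}(t)=g(n\lambda\mathcal{E}_x(t),C_{*}(t))$: substituting $C_{*}(t)=l_0\theta(t)$, the value of $\lambda$ just obtained, and noting that $\theta(t)^{\beta/(1-\alpha)}$ equals the $(-1/(1-\alpha))$-th power of the infimum appearing in \eqref{procgamma}, all exponents combine to deliver $x^{i}_{*}(t)=\tfrac{l_0}{n}\gamma(t)$. Finally the aggregate budget equality (first line of \eqref{FOC2}) reads $nw=l_0\Exp[\int_0^\infty\psi_x(t)\gamma(t)\,dt+\int_0^\infty\psi_c(t)\,d\theta(t)]$, which fixes $l_0$ as in \eqref{lzero}. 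The main obstacle is the L\'evy time-reversal step, which relies on the joint identity $(\mathcal{E}_c(v)/\mathcal{E}_c(v-s),\mathcal{E}_x(v)/\mathcal{E}_x(v-s))_{s\in[0,v]}\stackrel{d}{=}(\mathcal{E}_c(s),\mathcal{E}_x(s))_{s\in[0,v]}$ together with a careful change of variable inside the infimum to transform $\mathcal{E}_x(v)^{-\alpha/(1-\alpha)}\mathcal{E}_x(s)^{\alpha/(1-\alpha)}$ into the form $\mathcal{E}_x(v-s)^{-\alpha/(1-\alpha)}$ required by \eqref{A}; the accompanying exponent arithmetic, while elementary, is delicate.
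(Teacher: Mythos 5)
Your proof is correct and follows essentially the same route as the paper's: reduce to the Bank--El Karoui backward equation \eqref{SPback} via Theorem \ref{SPoptpolicy}, make the power ansatz $l^{*}(t)=l_0\,\mathcal{E}_c(t)^{-(1-\alpha)/(1-\alpha-\beta)}\mathcal{E}_x(t)^{-\alpha/(1-\alpha-\beta)}$, use independence and stationarity of the L\'evy increments to reduce the conditional expectation to the constant $A$ and solve for $\lambda$, then recover $x^i_{*}$ from the binding Inada first-order condition and pin down $l_0$ from the budget equality. The only difference is cosmetic: you spell out the time-reversal/duality identity behind the $\inf_{0\le s\le u}(\mathcal{E}_c(s)\mathcal{E}_x^{-\alpha/(1-\alpha)}(u-s))$ form of $A$, which the paper compresses into the phrase ``use independence and stationarity of L\'evy increments.''
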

\begin{proof}
By Theorem \ref{SPoptpolicy}, to find the social planner's optimal policy it suffices to solve the backward equation \eqref{SPback}.

Recall that $h^i(\psi,c)=u^i_c(g^i(\psi,c),c)$, where $g^i(\cdot, c)$ is the inverse of $u^i_x(\cdot, c)$. For any $\lambda > 0$, simple algebra leads to $h^i(\frac{\lambda}{\gamma^i} e^{rt}\psi_x(t), C(t)) = \delta( n \lambda \mathcal{E}_x(t))^{\frac{\alpha}{\alpha-1}}C^{\frac{\alpha + \beta -1}{1 - \alpha}}(t)$ with $\delta:=\frac{\beta}{\alpha}\left(\frac{\alpha + \beta}{\alpha}\right)^{\frac{1}{\alpha-1}}$. 
Set $C_{*}(t)= \sup_{0\leq s \leq t}l^{*}(s) \vee 0$ for some progressively measurable process $l^{*}$ to be found and then (\ref{SPback}) becomes
\begin{equation*}
\Exp\biggl[\int_{\tau}^{\infty} \delta e^{-rs} (n\lambda \mathcal{E}_x(s))^{\frac{\alpha}{\alpha-1}} \Big( \sup_{\tau \leq u \leq s} l^{*}(u)\Big)^{\frac{\alpha + \beta -1}{1 - \alpha}} ds \biggm|\F_{\tau}\biggr] = \lambda e^{-r\tau} \mathcal{E}_c(\tau),
\end{equation*} 
i.e.,
\begin{equation}
\label{backSPexample}
\Exp\biggl[\int_{0}^{\infty} \delta e^{-ru} (n\lambda)^{\frac{\alpha}{\alpha-1}} \frac{{\mathcal{E}_x}^{\frac{\alpha}{\alpha-1}}(u+\tau)}{\mathcal{E}_c(\tau)}  \inf_{0 \leq s \leq u}\Big( {l^{*}}^{\frac{\alpha + \beta -1}{1 - \alpha}}(s+\tau)\Big) du \biggm|\F_{\tau}\biggr] = \lambda.
\end{equation}
Make now the \textsl{ansatz} $l^{*}(t):=l_0{\mathcal{E}_{c}}^{\frac{1 -\alpha}{\alpha + \beta -1}}(t){\mathcal{E}_x}^{\frac{\alpha}{\alpha + \beta -1}}(t)$ for some constant $l_0$, and use independence and stationarity of L\'evy increments to rewrite (\ref{backSPexample}) as
\begin{equation*}%\label{backSPexample2}
\frac{1}{n^{\frac{\alpha}{1 - \alpha}}} l_0^{\frac{\alpha + \beta - 1}{1 - \alpha}} \Exp\biggl[\int_{0}^{\infty} \delta e^{-ru} \inf_{0 \leq s \leq u}\left(\mathcal{E}_c(s)\mathcal{E}_x^{\frac{\alpha}{\alpha -1}}(u-s)\right) du \biggr] = \lambda^{\frac{1}{1-\alpha}}.
\end{equation*}
Set now $A:= \Exp[\int_{0}^{\infty}\delta e^{- r u} \inf_{0 \leq s \leq u} (\mathcal{E}_c(s) \mathcal{E}_x^{-\frac{\alpha}{1 - \alpha}}(u-s))du]$ and notice that it is finite. 
Indeed 
\begin{eqnarray*}
&& \Exp\bigg[\int_{0}^{\infty}\delta e^{- r u} \inf_{0 \leq s \leq u} (\mathcal{E}_c(s) \mathcal{E}_x^{-\frac{\alpha}{1 - \alpha}}(u-s))du\bigg] \leq \Exp\bigg[\int_{0}^{\infty}\delta e^{- r u} \mathcal{E}_c(u) du\bigg] \\
&& = \int_{0}^{\infty}\delta e^{- r u} \Exp\big[\mathcal{E}_c(u)\big] du = \frac{\delta}{r},
\end{eqnarray*}
where the second step follows from Tonelli's theorem, whereas the last one by the fact that $\mathcal{E}_c$ is a martingale (cf.\ Assumption \ref{asm2}).

%because $r>\frac{\alpha}{1-\alpha}\pi_x(1) + \pi_x(-\frac{\alpha}{1-\alpha})$ (cf.\ Assumption \ref{ass:rbig}). Indeed, 
%\begin{eqnarray*}
%&& \Exp\bigg[\int_{0}^{\infty}\delta e^{- r u} \inf_{0 \leq s \leq u} (\mathcal{E}_c(s) \mathcal{E}_x^{-\frac{\alpha}{1 - \alpha}}(u-s))du\bigg] \leq \Exp\bigg[\int_{0}^{\infty}\delta e^{- r u} \mathcal{E}_x^{-\frac{\alpha}{1 - \alpha}}(u)du\bigg] \\
%&& = \int_{0}^{\infty}\delta e^{- (r - \frac{\alpha}{1-\alpha}\pi_x(1) -\pi_x(-\frac{\alpha}{1-\alpha})) u} < \infty.
%\end{eqnarray*}
Then, by solving the previous equation for $\lambda$ one easily obtains
\begin{equation*}%\label{lambda1}
\lambda = \frac{1}{n^{\alpha}} A^{1-\alpha} l_0^{\alpha + \beta - 1}=:\lambda_*.
\end{equation*}
On the other hand, $x^i_{*}(t) = [ n\lambda_* \left(\frac{\alpha + \beta}{\alpha}\right) \mathcal{E}_x(t) C_{*}^{-\beta}(t)]^{\frac{1}{\alpha - 1}}$ by (\ref{binding2}) and therefore
\begin{equation*}%\label{xistar1}
x^i_{*}(t) = (n\lambda_*)^{-\frac{1}{1-\alpha}}\biggl[\left(\frac{\alpha + \beta}{\alpha}\right) \mathcal{E}_x(t) l_0^{-\beta}\inf_{0 \leq s \leq t}\left(\mathcal{E}_c^{\frac{\beta(1-\alpha)}{1-\alpha - \beta}}(s)\mathcal{E}_x^{\frac{\alpha\beta}{1-\alpha - \beta}}(s)\right)\biggr]^{-\frac{1}{1-\alpha}};
\end{equation*}
that is,
\begin{equation}
\label{xistar2}
x^i_{*}(t) = \frac{1}{n} l_0 \gamma(t)
\end{equation}
with $\gamma(t)$ as in (\ref{procgamma}).

To determine $l_0$ we make use of the budget constraint $\mathbb{E}[\int_{0}^{\infty}\psi_x(t) x_{*}(t) dt + \int_{0}^{\infty}\psi_c(t) dC_{*}(t)] = nw_o$. In fact, recalling that $x_{*} := \sum_{i=1}^n x^i_{*}$, from (\ref{xistar2}) we find
\begin{equation}
\label{findinglzero}
l_0 \Exp\biggl[\int_0^{\infty} \psi_x(t) \gamma(t) dt  + \int_{0}^{\infty}\psi_c(t)d\theta(t)\biggr] = nw_o,
\end{equation}
since $C_{*}(t) = \sup_{0 \leq s \leq t}l^{*}(s) = l_0 \sup_{0 \leq s \leq t}(\mathcal{E}_c^{-\frac{1 - \alpha}{1 - \alpha - \beta}}(s) \mathcal{E}_x^{-\frac{\alpha}{1 - \alpha - \beta}}(s)) = l_0 \theta(t)$ with $\theta(t)$ as in (\ref{proctheta}) and 
if $\Exp[\int_0^{\infty} \psi_x(t) \gamma(t) dt + \int_{0}^{\infty}\psi_c(t)d\theta(t)] < \infty$. Then, by solving \eqref{findinglzero} for $l_0$, \eqref{lzero} follows.
\smallskip

To conclude the proof it thus remains to show that $\Exp[\int_0^{\infty} \psi_x(t) \gamma(t) dt + \int_{0}^{\infty}\psi_c(t)d\theta(t)] < \infty$. 

We start showing $\Exp[\int_0^{\infty} \psi_x(t) \gamma(t) dt]< \infty$. Define the L\'evy process $\hat{Z}(t):=-\frac{\beta}{1-\alpha - \beta}(Z_c(t)-\pi_c(1)t) - \frac{\alpha\beta}{(1-\alpha)(1-\alpha - \beta)} (Z_x(t)-\pi_x(1)t)$ (cf.\ Lemma \ref{newLevy}) and denote by $\tau_{\xi}$ an independent random time, exponentially distributed with parameter $\xi:=r -\frac{\alpha}{1-\alpha}\pi_x(1)$. Noticing that $\xi>0$ by Assumption \ref{ass:rbig}, one has 
\begin{eqnarray*}
&& \hspace{-0.25cm} A \left(\frac{\alpha+\beta}{\alpha}\right)^{\frac{1}{1-\alpha}}\Exp\bigg[\int_0^{\infty} \psi_x(t) \gamma(t) dt\bigg] = \Exp\bigg[\int_0^{\infty} e^{-rt}\mathcal{E}_x^{-\frac{\alpha}{1-\alpha}}(t)\sup_{0 \leq s \leq t}\left(\mathcal{E}_c^{\frac{\beta(1-\alpha)}{1-\alpha - \beta}}(s)\mathcal{E}_x^{\frac{\alpha\beta}{1-\alpha - \beta}}(s)\right)^{-\frac{1}{1-\alpha}}dt\bigg]\nonumber\\
&& \hspace{-0.25cm}= \xi^{-1}\Exp\bigg[e^{-\frac{\alpha}{1-\alpha}Z_x(\tau_{\xi})}e^{\sup_{0\leq s \leq \tau_{\xi}}\hat{Z}(s)}\bigg] \leq \xi^{-1}\Exp\bigg[e^{-\frac{2\alpha}{1-\alpha}Z_x(\tau_{\xi})}\bigg]^{\frac{1}{2}}\Exp\bigg[e^{2\sup_{0\leq s \leq \tau_{\xi}}\hat{Z}(s)}\bigg]^{\frac{1}{2}},
\end{eqnarray*}
where H\"older's inequality has been used in the last step. Denoting by $\hat{\pi}$ the Laplace exponent of $\hat{Z}$ under $\Pb$, by the Wiener-Hopf factorization (see, e.g., Chapter VI.2 in \cite{Bertoin96}) we have $\Exp[e^{2\sup_{0\leq s \leq \tau_{\xi}}\hat{Z}(s)}]<\infty$ and $\Exp[e^{-\frac{2\alpha}{1-\alpha}Z_x(\tau_{\xi})}]<\infty$ because $\xi>\hat{\pi}(2) \vee \pi_x(-\frac{2\alpha}{1-\alpha})$ by definition of $\xi$ and Assumption \ref{ass:rbig}.
\smallskip

\noindent We now prove $\Exp[\int_{0}^{\infty}\psi_c(t)d\theta(t)] < \infty$. Fix $0< T<\infty$ and recall (cf.\ proof of Theorem \ref{SPexistence}) the measure $\tilde{\Pb}_c$ such that $\frac{d\tilde{\Pb}_c}{d\Pb}=\mathcal{E}_c(T)$ on $\F_T$. Then a change of measure and integration by parts lead to
$$\Exp\bigg[\int_{0}^{T}\psi_c(t)d\theta(t)\bigg] = \tilde{\Exp}_c\bigg[\int_0^{T} e^{-rt}d\theta(t)\bigg] =\tilde{\Exp}_c\bigg[e^{-rT}\theta(T) -1 + \int_0^T re^{-rt}\theta(t)dt\bigg],$$
where we have used that $\theta(0)=1$ due to \eqref{proctheta}.
Therefore, taking limits as $T\uparrow \infty$ and invoking the monotone convergence theorem we obtain 
$$\Exp\bigg[\int_{0}^{\infty}\psi_c(t)d\theta(t)\bigg] = \tilde{\Exp}_c\Big[ \lim_{T\uparrow\infty} e^{-rT}\theta(T)\Big] + \tilde{\Exp}_c\bigg[\int_0^{\infty} re^{-rt}\theta(t)dt\bigg] - 1.$$
Hence $\tilde{\Exp}_c[\int_0^{\infty} re^{-rt}\theta(t)dt]< \infty$ is necessary to have $\Exp[\int_{0}^{\infty}\psi_c(t)d\theta(t)]<\infty$. It is actually also sufficient since it also implies $\lim_{T\uparrow\infty} e^{-rT}\theta(T)=0$ $\tilde{\Pb}_c$-a.s.\ as it is shown, e.g., in \cite{BankRiedel01}, proof of Lemma 4.9-(i). But now
\begin{equation}
\label{finite}
\tilde{\Exp}_c\bigg[\int_0^{\infty} re^{-rt}\theta(t)dt\bigg] = \tilde{\Exp}_c\bigg[e^{\sup_{0\leq s \leq \tau_r}\widetilde{Z}(s)}\bigg],
\end{equation}
where we have defined the L\'evy process (cf.\ Lemma \ref{newLevy}) $\widetilde{Z}(t):=-\frac{\alpha}{1-\alpha-\beta}(Z_x(t)-\pi_x(1) t) - \frac{1-\alpha}{1-\alpha-\beta}(Z_c(t)-\pi_c(1)t)$ and where $\tau_r$ is an independent random time exponentially distributed with parameter $r$. By Wiener-Hopf factorization we conclude that the right hand-side of \eqref{finite} is finite if $r>\widetilde{\pi}(1)$ where $\widetilde{\pi}$ is the Laplace exponent of $\widetilde{Z}$ under $\tilde{\Pb}_c$ (cf.\ Lemma \ref{newLevy}). The proof is now complete due to Assumption \ref{ass:rbig}.
%Indeed, if $\limsup_{T\uparrow \infty}e^{-rT}\theta(T) > 0$ on a set $\Omega_o$ such that $\tilde{\Pb}_c(\Omega_o) > 0$, then for any $\omega \in \Omega_o$ there exists $\epsilon(\omega)$ such that
%$$e^{-r\tau_n(\omega)}\theta(\omega,\tau_n(\omega)) \geq \epsilon(\omega)$$
%along a sequence of stopping times converging to $+\infty$ as $n\uparrow\infty$. Without loss of generality we take 
\end{proof}

We now explicitly solve the best reply problems (\ref{Vi}). The proof of the following result employs arguments similar to those used for the proof of Proposition \ref{solsymmetric}; it is relegated to Appendix \ref{AppProofs}, Section \ref{proofgameexample}, for the sake of completeness.

\begin{prop}
\label{gamesolsymmetric}
Define the processes
\begin{equation}
\label{gamegamma}
\gamma(t):=\frac{1}{A}\Big[\left(\frac{\alpha + \beta}{\alpha}\right)\mathcal{E}_x(t) \inf_{0 \leq s \leq t} \left(\mathcal{E}_c^{\frac{\beta(1 - \alpha)}{1 - \alpha - \beta}}(s)\mathcal{E}_x^{\frac{\alpha\beta}{1 - \alpha - \beta}}(s)\right)\Big]^{-\frac{1}{1 - \alpha}},
\end{equation}
\begin{equation}
\label{gametheta}
\theta(t):=\sup_{0 \leq s \leq t}\left(\mathcal{E}_c^{-\frac{1 - \alpha}{1 - \alpha - \beta}}(s) \mathcal{E}_x^{-\frac{\alpha}{1 - \alpha - \beta}}(s) \right),
\end{equation}
and the constants
\begin{equation}
\label{kappa}
\kappa:=\frac{w_o}{\displaystyle \Exp\biggl[\int_0^{\infty} \psi_x(t) \gamma(t) dt  + \frac{1}{n}\int_{0}^{\infty}\psi_c(t)d\theta(t)\biggr]}
\end{equation}
and
\begin{equation}
\label{gameA}
A:=\Exp\biggl[\int_{0}^{\infty}\delta e^{- r u} \inf_{0 \leq s \leq u} \left(\mathcal{E}_c(s) \mathcal{E}_x^{-\frac{\alpha}{1 - \alpha}}(u-s)\right)du\biggr]
\end{equation}
with $\delta:=\frac{\beta}{\alpha}\left(\frac{\alpha + \beta}{\alpha}\right)^{\frac{1}{\alpha-1}}$.

Then the symmetric Nash equilibrium of game (\ref{Vi}) is given by
\begin{equation}
\label{gamepublic}
\hat{C}^i(t)= \frac{\kappa}{n}\theta(t), \quad i=1,\dots,n,
\end{equation}
\begin{equation}
\label{gameprivate}
\hat{x}^{i}(t) = \kappa \gamma(t), \quad i=1,\dots,n,
\end{equation}
with
\begin{equation*}%\label{gamelambda}
\hat{\lambda}^i =  A^{1-\alpha} \kappa^{\alpha + \beta - 1}, \quad i=1,\dots,n.
\end{equation*}
\end{prop}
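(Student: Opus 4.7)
My plan is to mimic the proof of Proposition~\ref{solsymmetric} but starting from the symmetric Nash characterisation in Theorem~\ref{SymmetricNashoptimalpolicy}. That is, I would solve the backward equation \eqref{Nashbacksymm}, set the individual contribution to be $\sup_{0\leq u\leq t}\ell^*(u)\vee 0$, and close the system via the per-agent budget constraint. The first step is to compute $h(\psi,c)$ explicitly for $u(x,c)=\frac{x^\alpha c^\beta}{\alpha+\beta}$: from $u_x(g,c)=\psi$ one gets $g(\psi,c)=(\frac{\alpha+\beta}{\alpha}\psi)^{1/(\alpha-1)}c^{\beta/(1-\alpha)}$ and hence $h(\psi,c)=\delta\,\psi^{\alpha/(\alpha-1)}c^{(\alpha+\beta-1)/(1-\alpha)}$ with $\delta$ as in the statement. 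Substituting $\psi=\lambda e^{rt}\psi_x(t)=\lambda\mathcal{E}_x(t)$ and the aggregate $c=n\sup_{\tau\leq u\leq t}\ell^*(u)$, equation \eqref{Nashbacksymm} (with $T=\infty$) becomes
\begin{equation*}
\Exp\bigg[\int_\tau^\infty \delta e^{-rt}(\lambda\mathcal{E}_x(t))^{\alpha/(\alpha-1)}\Big(n\sup_{\tau\leq u\leq t}\ell^*(u)\Big)^{(\alpha+\beta-1)/(1-\alpha)}dt\,\Big|\,\F_\tau\bigg]=\lambda e^{-r\tau}\mathcal{E}_c(\tau).
\end{equation*}

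Next, I would make the same ansatz as in the social-planner proof, namely $\ell^*(t)=\ell_0\,\mathcal{E}_c^{-(1-\alpha)/(1-\alpha-\beta)}(t)\mathcal{E}_x^{-\alpha/(1-\alpha-\beta)}(t)$ for a constant $\ell_0>0$ to be determined. A short exponent calculation gives $\ell^*(u)^{(\alpha+\beta-1)/(1-\alpha)}=\ell_0^{(\alpha+\beta-1)/(1-\alpha)}\mathcal{E}_c(u)\mathcal{E}_x^{\alpha/(1-\alpha)}(u)$; since $(\alpha+\beta-1)/(1-\alpha)<0$, the supremum over $u$ turns into an infimum. Substituting the $\tau$-shifted form of the Lévy processes and using the independence/stationarity of their increments (which lets me pull $\mathcal{E}_c(\tau)$ and a cancelling factor $\mathcal{E}_x(\tau)^{\alpha/(\alpha-1)}\mathcal{E}_x(\tau)^{\alpha/(1-\alpha)}=1$ out of the conditional expectation), the equation collapses to a purely algebraic identity in $\lambda$ and $\ell_0$. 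After dividing through by $\lambda e^{-r\tau}\mathcal{E}_c(\tau)$ and using Fubini, what remains on the left is exactly $A/\delta$ (cf.~\eqref{gameA}), leading to
\begin{equation*}
\delta\,\lambda^{1/(\alpha-1)}(n\ell_0)^{(\alpha+\beta-1)/(1-\alpha)}(A/\delta)=1,
\end{equation*}
from which $\lambda=A^{1-\alpha}(n\ell_0)^{\alpha+\beta-1}$. Writing $\kappa:=n\ell_0$, this is precisely the form of $\lambda^i$ stated in the proposition and yields $\hat{C}^i(t)=\ell_0\theta(t)=\frac{\kappa}{n}\theta(t)$.

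For the private consumption, I would invert $u^i_x$ via \eqref{binding2} applied to the game FOC: $\hat{x}^i(t)=g(\lambda\mathcal{E}_x(t),\hat{C}(t))$ with $\hat{C}(t)=\kappa\theta(t)$. Plugging in $g$ and using $\lambda^{1/(\alpha-1)}=A^{-1}\kappa^{(1-\alpha-\beta)/(1-\alpha)}$, the factor $\kappa^{(1-\alpha-\beta)/(1-\alpha)}\cdot\kappa^{\beta/(1-\alpha)}=\kappa$ pops out, and comparing with \eqref{gamegamma} one reads off $\hat{x}^i(t)=\kappa\gamma(t)$. Finally, $\kappa$ is pinned down by the \emph{individual} budget constraint (the first line of \eqref{FOC2game}), which becomes $\kappa\bigl(\Exp[\int_0^\infty\psi_x(t)\gamma(t)\,dt]+\tfrac{1}{n}\Exp[\int_0^\infty\psi_c(t)\,d\theta(t)]\bigr)=w$, matching \eqref{kappa}. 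Admissibility and sufficiency then follow because all four conditions in \eqref{FOC2game} are satisfied (the second is the backward equation we just solved, binding on the support of $d\hat{C}^i$ by the flat-off nature of the ansatz, which makes the third condition hold too).

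The main obstacle is the book-keeping of the factor $n$: compared with the social-planner computation, the $n$ appears \emph{inside} the concavity exponent (since the aggregate equals $n$ times the individual's strategy, not a weighted sum of heterogeneous contributions), producing the exponent $n^{(\alpha+\beta-1)/(1-\alpha)}$ rather than $n^{\alpha/(\alpha-1)}$. This is exactly what shifts the per-capita contribution by a factor of $1/n$ relative to the planner solution and underpins the quantification of the free-rider effect later on; everything else is a mechanical rerun of the Lévy-stationarity argument from Proposition~\ref{solsymmetric}.
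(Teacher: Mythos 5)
Your proposal is correct and follows essentially the same route as the paper's own proof in Appendix \ref{proofgameexample}: reduce to the backward equation \eqref{Nashbacksymm} via Theorem \ref{SymmetricNashoptimalpolicy}, compute $h$ explicitly for the Cobb--Douglas utility, make the power-type ansatz $\ell^*(t)=\frac{\kappa}{n}\mathcal{E}_c^{-\frac{1-\alpha}{1-\alpha-\beta}}(t)\mathcal{E}_x^{-\frac{\alpha}{1-\alpha-\beta}}(t)$, use independence and stationarity of the L\'evy increments to reduce to the algebraic relation $\kappa^{\frac{\alpha+\beta-1}{1-\alpha}}A=(\lambda^i)^{\frac{1}{1-\alpha}}$, recover $\hat{x}^i$ from the binding consumption first-order condition, and pin down $\kappa$ from the individual budget constraint. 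All exponent computations and the book-keeping of the factor $n$ match the paper.
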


Thanks to the results of Propositions \ref{solsymmetric} and \ref{gamesolsymmetric}, we are now able to explicitly evaluate the free rider effect for our symmetric economy with Cobb-Douglas utilities and L\'evy uncertainty.
Let $x^{i}_{*}$ be the optimal private consumption in the social planner's problem (cf.\ (\ref{SPprivate})), and let $\hat{x}^{i}$ denote the Nash equilibrium private consumption (cf.\ (\ref{gameprivate})). Then one has 
\begin{align*}
\displaystyle x^{i}_{*}(t) &= \frac{w_o\gamma(t)}{\displaystyle \Exp\biggl[\int_0^{\infty} \psi_x(t) \gamma(t) dt  + \int_{0}^{\infty}\psi_c(t)d\theta(t)\biggr]} \nonumber \\
&\leq \displaystyle \frac{w_o\gamma(t)}{\displaystyle \Exp\biggl[\int_{0}^{\infty} \psi_x(t) \gamma(t) dt  + \frac{1}{n}\int_0^{\infty}\psi_c(t)d\theta(t)\biggr]} = \hat{x}^{i}(t), \nonumber
\end{align*}
with equality for $n=1$. It follows that in a strategic context each agent spends more for the private consumption than what would be suggested by the social planner.
On the other hand, we have $\kappa \leq l_0$ (with $\kappa$ as in (\ref{kappa}), $l_0$ as in (\ref{lzero}) and equality if $n=1$) which implies that 
$$C_*(t) = l_0 \theta(t) \geq \kappa \theta(t) = \hat{C}(t), \quad \Pb\text{-a.s.}\,\,\forall\; t\geq 0;$$
that is, the social planner's optimal cumulative contribution into the public good (\ref{SPpublic}) is bigger than the corresponding Nash equilibrium counterpart (\ref{gamepublic}). Our model thus exhibits a free rider effect.

\subsection{Free Rider Effect: The Role of Uncertainty}
\label{free-rider-uncertainty}

The evaluation of the free rider effect can be made even more explicit in a Black-Scholes setting and with the public good taken as a num\'{e}raire. 

\begin{prop}
\label{freeridereffectprop}
Let $C_{*}$ be the optimal aggregated public good contribution for the social planner problem (cf.\ (\ref{SPpublic})) and let $\hat{C}$ denote its symmetric Nash-equilibrium value (cf.\ (\ref{gamepublic})). Assume $\psi_c(t)=e^{-rt}$ and $\psi_x(t)=e^{-rt}\mathcal{E}_{x}(t) \equiv e^{-rt + \sigma W(t)}$, $\sigma > 0$, for a one-dimensional Brownian motion $W$ and for some $r$ such that $\sqrt{2r} > \frac{\sigma \alpha}{1-\alpha-\beta}$.\footnote{Notice that the martingale property of Assumption \ref{asm2} is without loss of generality in this case, one just has to correct \(r\) by $\frac{1}{2}\sigma^2$, i.e.\ by the Laplace exponents of $\sigma \frac{W(t)}{t}$.} Then, for any $n \geq 1$ one has
\begin{equation}
\label{freeridereffectirr}
\frac{\hat{C}(t)}{C_{*}(t)}=\frac{\kappa}{l_0}=\frac{\alpha + \beta}{n\alpha + \beta} \leq 1,
\end{equation}
where $\kappa$ and $l_0$ are as in (\ref{kappa}) and (\ref{lzero}), respectively.
\end{prop}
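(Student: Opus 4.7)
The plan is to reduce the claim to a single expectation identity and then apply the explicit formulas for $\kappa$ and $l_0$. First note the ratio is deterministic: by \eqref{gamepublic}, $\hat{C}(t)=\sum_{i=1}^n\hat{C}^i(t)=\kappa\,\theta(t)$, and by \eqref{SPpublic}, $C_{*}(t)=l_0\,\theta(t)$, so $\hat{C}(t)/C_{*}(t)\equiv \kappa/l_0$. Writing
$$I_1 := E\!\left[\int_0^\infty \psi_x(t)\gamma(t)\,dt\right], \qquad I_2 := E\!\left[\int_0^\infty \psi_c(t)\,d\theta(t)\right],$$
the definitions \eqref{lzero} and \eqref{kappa} yield $\kappa/l_0 = (I_1+I_2)/(n I_1+I_2)$. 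Substituting $I_2=(\beta/\alpha)I_1$ converts this quotient into $(\alpha+\beta)/(n\alpha+\beta)$. Hence everything reduces to the single identity $\beta I_1 = \alpha I_2$.

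To establish it, I would combine the Cobb-Douglas Euler identities $x\,u_x(x,c)=\alpha\,u(x,c)$ and $c\,u_c(x,c)=\beta\,u(x,c)$ with the social-planner first-order conditions \eqref{FOC}. Multiplying the (binding, by Inada) fourth line of \eqref{FOC} by $x^i_{*}(\tau)$ gives $\psi_x(\tau)\,x^i_{*}(\tau)=(\alpha/\lambda)\gamma^i e^{-\int_0^\tau r\,ds}u^i(x^i_{*},C_{*})$; summing over $i$ and taking the expected time integral yields $l_0 I_1 = \alpha V_{SP}/\lambda$. For the public good, the third (complementary-slackness) line of \eqref{FOC} together with a Fubini swap — the tower property lets one drop the inner conditional expectation against the $\F_t$-adapted random measure $dC_{*}$ — produces
$$E\!\left[\int_0^\infty \lambda\,\psi_c(t)\,dC_{*}(t)\right] \;=\; E\!\left[\int_0^\infty e^{-\int_0^s r\,du}\sum_{i=1}^n\gamma^i u^i_c(x^i_{*},C_{*})\,C_{*}(s)\,ds\right],$$
and $c\,u_c=\beta u$ turns the right side into $\beta V_{SP}$, so $l_0 I_2 = \beta V_{SP}/\lambda$. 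Dividing cancels $V_{SP}/\lambda$ and delivers $I_1/I_2=\alpha/\beta$. (The same relation can be read off from the symmetric Nash conditions \eqref{FOC2game} by using $\hat{C}^i=\hat{C}/n$, which provides a cross-check.)

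The remaining ingredient is the finiteness of $l_0,\kappa,A$ under the hypothesis $\sqrt{2r}>\sigma\alpha/(1-\alpha-\beta)$. In the Black-Scholes setting $\theta(t)=\exp(-\mu\,m(t))$ with $\mu:=\alpha\sigma/(1-\alpha-\beta)$ and $m(t):=\inf_{s\le t}W(s)$; the reflection principle $-m(t)\stackrel{d}{=}|W(t)|$ gives $E[\theta(t)]=2\,e^{\mu^2 t/2}\Phi(\mu\sqrt{t})$, and integration by parts on the Lebesgue-Stieltjes integral expresses $I_2 = -1 + r\int_0^\infty e^{-rt}E[\theta(t)]\,dt$, which converges precisely when $r>\mu^2/2$. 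An analogous Gaussian-moment estimate handles $I_1$ (the same running extreme of $W$ appears inside $\gamma$), and the bound $E[e^{-(\alpha\sigma/(1-\alpha))M(u)}]\le 1$ gives $A\le\delta/r$ directly. The most delicate step in the whole argument is the Fubini/tower-property exchange in the complementary-slackness line, where optionality of $dC_{*}$ and the integrability bounds just established are both needed to justify swapping the order of the stochastic integrals.
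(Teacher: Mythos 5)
Your argument is correct, but the route to the central identity is genuinely different from the paper's. The paper computes the two expectations $I_1=\Exp[\int_0^\infty\psi_x(t)\gamma(t)\,dt]$ and $I_2=\Exp[\int_0^\infty\psi_c(t)\,d\theta(t)]$ \emph{explicitly}, via fluctuation theory for Brownian motion (the supremum at an independent exponential time is exponentially distributed with parameter $\sqrt{2r}$, plus the independence of $W(\tau_r)-\sup_{s\le\tau_r}W(s)$ from the supremum and duality), obtaining $I_2=\sqrt{2r}/(\sqrt{2r}-\tfrac{\sigma\alpha}{1-\alpha-\beta})$ and $I_1=\tfrac{\alpha}{\beta}I_2$, and only then forms the ratio. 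You instead derive $\beta I_1=\alpha I_2$ abstractly from the Kuhn--Tucker conditions \eqrref{FOC} combined with the Euler identities $x\,u_x=\alpha u$ and $c\,u_c=\beta u$: multiplying the binding consumption condition by $x^i_*$ gives $\lambda l_0 I_1=\alpha V_{SP}$, and the complementary-slackness condition plus the projection/Fubini swap (which the paper itself justifies via Jacod's Theorem 1.33 in the Remark following the Lagrangian) gives $\lambda l_0 I_2=\beta V_{SP}$. This is a cleaner explanation of \emph{why} the answer $(\alpha+\beta)/(n\alpha+\beta)$ is independent of $\sigma$ and $r$ — it is forced by homogeneity of the utility together with the budget and flat-off conditions — and it would extend verbatim to general exponential L\'evy prices, whereas the paper's computation buys the explicit values of $I_1$, $I_2$, $l_0$ and $\lambda$ separately (which you still need, at least as estimates, for the finiteness claim; your Gaussian bounds for $\Exp[\theta(t)]$ and for $A$ handle that correctly under $\sqrt{2r}>\sigma\alpha/(1-\alpha-\beta)$). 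One small caveat: the paper's convention is that $\int_0^T\psi_c\,dC$ means $\int_{[0,T]}$, including the initial bulk $\theta(0)=1$, so $I_2=r\int_0^\infty e^{-rt}\Exp[\theta(t)]\,dt$ \emph{without} the ``$-1$'' you wrote in the integration-by-parts step; your own Fubini argument (which uses $\int_{[0,s]}dC_*=C_*(s)$) implicitly adopts the atom-including convention, and the result \eqrref{freeridereffectirr} is only correct under that convention, so the two parts of your write-up should be reconciled on this point.
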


\begin{proof}
From (\ref{SPpublic}) and (\ref{gamepublic}) it easily follows that 
\begin{eqnarray}
\label{irrfreeridingratio}
\frac{\hat{C}(t)}{C_{*}(t)} = \frac{\kappa}{l_0} &\hspace{-0.25cm}=\hspace{-0.25cm}&\frac{\Exp\biggl[\displaystyle \int_{0}^{\infty}e^{-rt}\mathcal{E}_{x}(t)\gamma(t)dt + \int_{0}^{\infty}e^{-rt}d\theta(t)\biggr]}{\Exp\biggl[\displaystyle n\int_{0}^{\infty}e^{-rt}\mathcal{E}_{x}(t)\gamma(t)dt + \int_{0}^{\infty}e^{-rt}d\theta(t)\biggr]}\nonumber \\
&\hspace{-0.25cm}=\hspace{-0.25cm}&\frac{\Exp\biggl[\displaystyle \int_{0}^{\infty}e^{-rt}\mathcal{E}_{x}(t)\gamma(t)dt + r\int_{0}^{\infty}e^{-rt}\theta(t)dt\biggr]}{\Exp\biggl[\displaystyle n\int_{0}^{\infty}e^{-rt}\mathcal{E}_{x}(t)\gamma(t)dt + r\int_{0}^{\infty}e^{-rt}\theta(t)dt\biggr]},
\end{eqnarray}
with $\gamma(t)$ and $\theta(t)$ as in (\ref{gamegamma}) and (\ref{gametheta}), respectively.
Then, in order to obtain (\ref{freeridereffectirr}), we need to evaluate
$$\Exp\biggl[\int_{0}^{\infty}e^{-rt}\mathcal{E}_{x}(t)\gamma(t)dt\biggr] \qquad\,\, \text{and} \qquad\,\, \Exp\biggl[\int_{0}^{\infty}re^{-rt}\theta(t)dt\biggr].$$
We have
\begin{eqnarray}
\label{integraltheta}
&&\Exp\biggl[\int_{0}^{\infty}re^{-rt}\theta(t)dt\biggr] = \Exp\biggl[\int_{0}^{\infty}re^{-rt} \sup_{0 \leq s \leq t} \mathcal{E}_{x}^{-\frac{\alpha}{1-\alpha-\beta}}(s)\, dt\biggr] =\Exp\Big[e^{-\frac{\sigma \alpha}{1-\alpha-\beta}\inf_{0 \leq s \leq \tau_r}W(s)}\Big] \nonumber\\
&& = \Exp\Big[e^{\frac{\sigma \alpha}{1-\alpha-\beta}\sup_{0 \leq s \leq \tau_r}(\tilde{W}(s))}\Big] =\frac{\sqrt{2r}}{\sqrt{2r} - \frac{\sigma \alpha}{1-\alpha-\beta}},
\end{eqnarray}
where $\tilde{W}:=-W$, $\tau_r$ is an independent exponentially distributed random time and where the last equality follows from $\sup_{0 \leq s \leq \tau_r}(\tilde{W}(s)) \sim Exp{(\sqrt{2r})}$ (cf., e.g., \cite{Bertoin96}, Chapter VII or \cite{BorodinSalminen}).
On the other hand, recall $\gamma$ as in (\ref{gamegamma}) and exploit that $W(\tau_r) - \sup_{0\leq u \leq \tau_r}W(u)$ is independent of $\sup_{0\leq u \leq \tau_r}W(u)$ (see, e.g., Theorem VI.5(i) in \cite{Bertoin96}) and that $W(\tau_r) - \sup_{0\leq u \leq \tau_r}W(u)$ has the same distribution as $\inf_{0\leq u \leq \tau_r}W(u)$ (Duality Theorem) to find
\begin{eqnarray*}%\label{integralgamma}
&&\Exp\biggl[\int_{0}^{\infty}e^{-rt}\mathcal{E}_{x}(t)\gamma(t)dt\biggr] 
=\frac{1}{rA}\left(\frac{\alpha + \beta}{\alpha}\right)^{-\frac{1}{1-\alpha}} \Exp\biggl[\int_{0}^{\infty}re^{-rt} e^{-\frac{\sigma \alpha}{1-\alpha}W(t)}
e^{-\frac{\sigma \alpha\beta}{(1-\alpha)(1-\alpha-\beta)}\inf_{0\leq u \leq t}W(u)}dt\biggr] \\
%&&=\frac{1}{rA}\left(\frac{\alpha + \beta}{\alpha}\right)^{-\frac{1}{1-\alpha}} \Exp\biggl[e^{\frac{\sigma \alpha}{1-\alpha}(-W(\tau_r))} e^{\frac{\sigma \alpha\beta}{(1-\alpha)(1-\alpha-\beta)}\sup_{0\leq u \leq \tau_r}(-W(u))}\biggr] \\
&&=\frac{1}{rA}\left(\frac{\alpha + \beta}{\alpha}\right)^{-\frac{1}{1-\alpha}} \Exp\biggl[e^{\frac{\sigma \alpha}{1-\alpha}[\tilde{W}(\tau_r)) - \sup_{0\leq u \leq \tau_r}\tilde{W}(u)]} e^{\frac{\sigma \alpha}{1-\alpha-\beta}\sup_{0\leq u \leq \tau_r}\tilde{W}(u)} \biggr] \\
&&=\frac{1}{rA}\left(\frac{\alpha + \beta}{\alpha}\right)^{-\frac{1}{1-\alpha}} \Exp\biggl[e^{\frac{\sigma \alpha}{1-\alpha}\inf_{0\leq u \leq \tau_r}\tilde{W}(u)}\biggr]\Exp\biggl[e^{\frac{\sigma \alpha}{1-\alpha-\beta}\sup_{0\leq u \leq \tau_r}\tilde{W}(u)} \biggr] \\
%&&=\frac{1}{rA}\left(\frac{\alpha + \beta}{\alpha}\right)^{-\frac{1}{1-\alpha}} \Exp\biggl[e^{-\frac{\sigma \alpha}{1-\alpha}\sup_{0\leq u \leq \tau_r}W(u)}\biggr]\Exp\biggl[e^{\frac{\sigma \alpha}{1-\alpha-\beta}\sup_{0\leq u \leq \tau_r}\tilde{W}(u)} \biggr] \\
&&=\frac{1}{rA}\left(\frac{\alpha + \beta}{\alpha}\right)^{-\frac{1}{1-\alpha}} \left[\frac{\sqrt{2r}}{\sqrt{2r} + \frac{\sigma \alpha}{1-\alpha}}\right]\left[\frac{\sqrt{2r}}{\sqrt{2r} - \frac{\sigma \alpha}{1-\alpha - \beta}}\right],
\end{eqnarray*}
where we have used once more $\sup_{0 \leq s \leq \tau_r}\tilde{W}(s) \sim Exp{(\sqrt{2r})}$.
Also, setting $\delta:=\frac{\beta}{\alpha}\left(\frac{\alpha + \beta}{\alpha}\right)^{-\frac{1}{1-\alpha}}$ we have 
\begin{eqnarray*}
A & \hspace{-0.25cm}  = \hspace{-0.25cm}  & \Exp\biggl[\int_0^{\infty}\delta\,e^{-rt}\inf_{0 \leq s \leq t}\mathcal{E}_{x}^{-\frac{\alpha}{1-\alpha}}(t-s)\,dt\biggr] = \frac{\delta}{r}\Exp\biggl[e^{-\frac{\sigma \alpha}{1-\alpha} \sup_{0 \leq s \leq \tau_r}W(\tau_r -s)}\biggr] \\
& \hspace{-0.25cm} = \hspace{-0.25cm} & \frac{\delta}{r}\Exp\biggl[e^{-\frac{\sigma \alpha}{1-\alpha} \sup_{0 \leq s' \leq \tau_r}W(s')}\biggr] = \frac{\delta}{r}\left[\frac{\sqrt{2r}}{\sqrt{2r} + \frac{\sigma \alpha}{1-\alpha}}\right].
\end{eqnarray*}
Therefore
\begin{eqnarray}
\label{integralgamma2}
& & \Exp\biggl[\int_{0}^{\infty}e^{-rt}\mathcal{E}_{x}(t)\gamma(t)dt\biggr] = \frac{\alpha}{\beta}\left[\frac{\sqrt{2r}}{\sqrt{2r} - \frac{\sigma \alpha}{1-\alpha - \beta}} \right].
\end{eqnarray}
Finally, by plugging (\ref{integraltheta}) and (\ref{integralgamma2}) into (\ref{irrfreeridingratio}), some simple algebra leads to (\ref{freeridereffectirr}). 
\end{proof}

We observe that the ratio $\hat{C}/C_{*}$, the underprovision of the public good due to free-riding, does not depend on $\sigma$, the volatility of the Brownian motion $W$. Thus, in our model
\begin{cor}
The degree of free-riding does not depend on the level of uncertainty.
\end{cor}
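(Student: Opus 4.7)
The plan is to start from the explicit formulas in Propositions~\ref{solsymmetric} and~\ref{gamesolsymmetric} and observe that, independently of the price specification,
\[
\frac{\hat{C}(t)}{C_{*}(t)} = \frac{\kappa}{l_0} = \frac{I_x + I_\theta}{n\,I_x + I_\theta},
\qquad
I_x := \Exp\!\left[\int_0^\infty \psi_x(t)\gamma(t)\,dt\right],\quad I_\theta := \Exp\!\left[\int_0^\infty \psi_c(t)\,d\theta(t)\right].
\]
Since $(a+b)/(na+b)=(\alpha+\beta)/(n\alpha+\beta)$ iff $b/a=\beta/\alpha$, the statement reduces to verifying $I_\theta / I_x = \beta/\alpha$ in the Black--Scholes setting.

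For $I_\theta$, I would use that $\psi_c(t)=e^{-rt}$ and that $\theta$ is nondecreasing with $\theta(0-)=0$, so integration by parts gives $I_\theta = r\,\Exp[\int_0^\infty e^{-rt}\theta(t)\,dt]$. Introducing an independent exponential random time $\tau_r$ with rate $r$, this becomes $\Exp[\theta(\tau_r)]$. Substituting $\mathcal{E}_x(t)=e^{\sigma W(t)}$ (with the required correction to $r$ absorbed as pointed out in the footnote) rewrites $\theta(\tau_r) = \exp\!\bigl(\tfrac{\sigma\alpha}{1-\alpha-\beta}\sup_{0\leq s\leq\tau_r}(-W(s))\bigr)$, and the classical identity $\sup_{0\leq s\leq\tau_r}(-W(s))\sim \mathrm{Exp}(\sqrt{2r})$ yields $I_\theta = \sqrt{2r}/(\sqrt{2r}-\sigma\alpha/(1-\alpha-\beta))$, finite precisely under the standing hypothesis on $r$.

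For $I_x$, using \eqref{gamegamma} with $\mathcal{E}_c\equiv 1$ the integrand equals a prefactor involving $A^{-1}$ times
\[
e^{-rt}\exp\!\Bigl(-\tfrac{\sigma\alpha}{1-\alpha}W(t) \;-\; \tfrac{\sigma\alpha\beta}{(1-\alpha)(1-\alpha-\beta)}\inf_{0\leq u\leq t}W(u)\Bigr).
\]
Again inserting an independent $\tau_r$, the key step is to decompose $-W(\tau_r) = \bigl(\sup_{0\leq u\leq\tau_r}W(u)-W(\tau_r)\bigr) - \sup_{0\leq u\leq\tau_r}W(u)$ and invoke two standard excursion-theoretic facts: the bracketed term is independent of $\sup_{0\leq u\leq\tau_r}W(u)$, and is distributed as $\sup_{0\leq u\leq\tau_r}(-W(u))$ by duality. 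The exponential then splits into a product, and the expectation factors into two Laplace transforms of exponentially distributed suprema, each evaluated via the same formula $\sqrt{2r}/(\sqrt{2r}\pm\mathrm{const})$ used above. A parallel calculation for $A$ in \eqref{gameA}, based on time-reversal of $W$ on $[0,\tau_r]$, produces a matching factor $\sqrt{2r}/(\sqrt{2r}+\sigma\alpha/(1-\alpha))$ which cancels cleanly, leaving $I_x = (\alpha/\beta)\cdot\sqrt{2r}/(\sqrt{2r}-\sigma\alpha/(1-\alpha-\beta))$.

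The ratio $I_\theta/I_x = \beta/\alpha$ then follows at once, and plugging into the first display yields \eqref{freeridereffectirr}. The main technical obstacle is the bookkeeping in the factorization step for $I_x$: one must cleanly isolate the $W$-dependence into a piece depending only on $\sup W$ and one depending only on $W-\sup W$, and then keep the exponents straight so that the prefactor $1/A$ exactly cancels the undesired Laplace transform, leaving only the term that matches $I_\theta$ up to the factor $\alpha/\beta$. A misplaced sign or a miscounted exponent in $\gamma$ would destroy this cancellation; once the cancellation is verified, the rest is routine application of the classical Laplace transforms of Brownian suprema sampled at an independent exponential time.
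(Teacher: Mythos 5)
Your proposal is correct and follows essentially the same route as the paper: the corollary is read off from the explicit ratio $\hat{C}/C_{*}=(\alpha+\beta)/(n\alpha+\beta)$ of Proposition \rref{freeridereffectprop}, which you establish by the same computation of the two Laplace transforms via suprema of Brownian motion sampled at an independent exponential time, the same excursion/duality factorization for the $\gamma$-integral, and the same cancellation against $A$. The only cosmetic difference is your upfront reduction to checking $I_\theta/I_x=\beta/\alpha$, which organizes the bookkeeping slightly more transparently than the paper's direct substitution.
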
 
This seems to be in contrast to the finding of other models in the economic literature in which it is shown that uncertainty may play some role in the free rider effect (cf.\ \cite{Austen-Smith80}, \cite{EichbergerKelsey99} and \cite{Wang10}, among others).

\subsection{Free Rider Effect: The Role of Irreversibility of Public Good Contributions}
\label{free-rider-irreversibility}

As in Proposition \ref{freeridereffectprop} assume $\psi_c(t)=e^{-rt}$ and $\psi_x(t)=e^{-rt}\mathcal{E}_{x}(t) \equiv e^{-rt + \sigma W(t)}$, $\sigma > 0$, for a one-dimensional Brownian motion $W$ and for some $r$ such that $\sqrt{2r} > \frac{\sigma \alpha}{1-\alpha-\beta}$. We now study the role played by irreversibility of the public good contributions (i.e.\ by the fact that these are monotone controls, possibly singular with respect to Lebesgue measure as functions of time) in the free rider effect. Given initial wealths $w^i=w_o$, $i=1,\dots,n$, define the nonempty, convex sets
\begin{equation}
\label{Swi}
\begin{split}
\mathcal{S}_{w_o}:=\biggl\{&(x^i, c^i) : \Omega \times [0,T] \mapsto \mathbb{R}_{+}^2\, \text{ optional, s.t.\ } \\ 
& \Exp\biggl[\int_0^T \psi_x(t) x^i(t) dt + r\int_0^T  \psi_c(t) c^i(t) dt\biggr] \leq w_o \biggr\}. 
\end{split}
\end{equation}
and
\begin{equation}
\label{Sw}
\begin{split}
\mathcal{S}_{nw_o}:=\biggl\{&(\underline{x}, \underline{c}) : \Omega \times [0,T] \mapsto \mathbb{R}_{+}^{2n}\, \text{ optional, s.t.\ } \\ 
&\sum_{i=1}^n \Exp\biggl[\int_0^T \psi_x(t) x^i(t) dt + r\int_0^T  \psi_c(t) c^i(t) dt\biggr] \leq nw_o \biggr\}.
\end{split}
\end{equation}
Notice that, differently to \eqref{Bwi} and \eqref{Bw}, in these two sets we do not require anymore the public good provisions to be monotone processes, possibly singular with respect to the Lebesgue measure (as functions of time). We have indeed here that the cumulative investment made up to time $t$, $C^i(t)$, satisfies $dC^i(t)=c^i(t) dt$.

Consider now the social planner problem and the public good contribution game (cf.\ Section \ref{SocialPlanner} and Section \ref{game}) when, however, the social planner picks her investment strategies from the set $\mathcal{S}_{nw_o}$ and agents from the set $\mathcal{S}_{w_o}$; that is, they face the optimal control problems
$$v_{SP}:=\sup_{(\underline{x}, \underline{c}) \in \mathcal{S}_{nw_o}} \sum_{i=1}^n \gamma^i U^{i}(x^i,c^{i};c^{-i})$$
and 
$$v^{i}(c^{-i}):= \sup_{(x^i,c^{i})\in\mathcal{S}_{w_o}}U^{i}(x^i,c^{i};c^{-i}), \qquad i=1,\dots,n.$$
Finally, we denote by $(\underline{x}_*, \underline{c}_*)$ the optimal policy of the social planner and, similarly to Definition \eqref{precNash}, we say 
\begin{defn}
\label{precNash-bis}
\((\hat{x}^{1},\dots,\hat{x}^{n},\hat{c}^{1},\dots,\hat{c}^{n})\) is a \emph{Nash equilibrium} if for all \(i\in\{1,\dots,n\}\), \((\hat{x}^{i},\hat{c}^{i})\in\mathcal{S}_{w_o}\) and \(U^{i}(\hat{x}^{i}, \hat{c}^{i},\hat{c}^{-i})=v^{i}(\hat{c}^{-i})\).
\end{defn}

Economically, taking public good contributions in the set $\mathcal{S}_{w_o}$ or $\mathcal{S}_{nw_o}$, we are assuming perfect reversibility of $C$; i.e., each agent can adjust contribution in the public good freely at every point of time.

\begin{prop}
Under the same assumptions of Proposition \ref{freeridereffectprop}, one has
\begin{equation}
\label{freeridingrev}
\frac{\hat{c}(t)}{c_{*}(t)} = \frac{\alpha + \beta}{n\alpha + \beta}
\end{equation}
for any $n \geq 1$.
\end{prop}
\begin{proof}
Employing arguments similar to those in the proof of Proposition \ref{prop1SP} (but now only with classical controls) one can show that the first-order conditions for optimality in the social planner's problem read $\Pb\otimes dt$-a.e.\footnote{Notice that such a result is in line with the well known finding from the economic literature (see, e.g., \cite{Jorgenson63}) that under perfect reversibility the optimal investment criterion is to equate the marginal operating profit with the user cost of capital.}
\begin{equation}
\label{FOCSPrev}
\left\{
\begin{array}{ll}
\displaystyle \frac{\alpha}{\alpha + \beta}(x_{*}^{i})^{\alpha-1}(t)c_{*}^{\beta}(t) = \lambda_{*} n\mathcal{E}_{x}(t),  \\ \\
\displaystyle \frac{\beta}{\alpha + \beta}(x_{*}^i)^{\alpha}(t)c_{*}^{\beta-1}(t) = \lambda_{*} r, \\ \\
\displaystyle \Exp\biggl[\int_0^{\infty}\psi_x(t)\sum_{i=1}^n x^i_{*}(t)\,dt + r\int_{0}^{\infty}e^{-rt}c_{*}(t)\,dt\biggr] = nw_o, 
\end{array}
\right.
\end{equation}
which are always binding because the Cobb-Douglas utility satisfies Inada conditions in both variables. On the other hand, for the Nash equilibrium they are $\Pb\otimes dt$-a.e.
\begin{equation}
\label{FOCSPnash}
\left\{
\begin{array}{ll}
\displaystyle \frac{\alpha}{\alpha + \beta}(\hat{x}^i)^{\alpha-1}(t)\hat{c}^{\beta}(t) = \hat{\lambda} \mathcal{E}_{x}(t),  \\ \\
\displaystyle \frac{\beta}{\alpha + \beta}(\hat{x}^i)^{\alpha}(t)\hat{c}^{\beta-1}(t) = \hat{\lambda} r, \\ \\
\displaystyle \Exp\biggl[\int_0^{\infty}\psi_x(t) \hat{x}^i(t)\,dt + r\int_{0}^{\infty}e^{-rt}\frac{1}{n}\hat{c}(t)\,dt\biggr] = w_o.
\end{array}
\right.
\end{equation}
By solving systems \eqref{FOCSPrev} and \eqref{FOCSPnash} one easily obtains
\begin{equation*}%\label{SolSPrev}
\left\{
\begin{array}{ll}
\displaystyle x_{*}^i(t) = \left(\frac{r\alpha}{\beta}\right)\Big[\lambda_{*}\left(\frac{\alpha + \beta}{\alpha}\right)\left(\frac{r\alpha}{\beta}\right)^{1-\alpha}\Big]^{-\frac{1}{1-\alpha-\beta}} (n\mathcal{E}_{x}(t))^{-\frac{(1-\beta)}{1-\alpha-\beta}},  \\ \\
\displaystyle c_{*}(t)= \Big[\lambda_{*}\left(\frac{\alpha + \beta}{\alpha}\right)\left(\frac{r\alpha}{\beta}\right)^{1-\alpha}\Big]^{-\frac{1}{1-\alpha-\beta}} (n\mathcal{E}_{x}(t))^{-\frac{\alpha}{1-\alpha-\beta}}, \\ \\
\displaystyle \lambda_{*}^{-\frac{1}{1-\alpha-\beta}}=\frac{nw_o}{rn^{-\frac{\alpha}{1-\alpha-\beta}}\left(\frac{\alpha+\beta}{\beta}\right)\Big[ \left(\frac{r\alpha}{\beta}\right)^{1-\alpha}\left(\frac{\alpha + \beta}{\alpha}\right)\Big]^{-\frac{1}{1-\alpha-\beta}}\Exp\biggl[\displaystyle \int_0^{\infty}e^{-rt}\mathcal{E}_{x}^{-\frac{\alpha}{1-\alpha-\beta}}(t)\,dt\biggr]}, 
\end{array}
\right.
\end{equation*}
and
\begin{equation*}%\label{Solgamerev}
\left\{
\begin{array}{ll}
\displaystyle \hat{x}^i(t) = \left(\frac{r\alpha}{\beta}\right)\Big[\hat{\lambda}\left(\frac{\alpha + \beta}{\alpha}\right)\left(\frac{r\alpha}{\beta}\right)^{1-\alpha}\Big]^{-\frac{1}{1-\alpha-\beta}} \mathcal{E}_{x}^{-\frac{(1-\beta)}{1-\alpha-\beta}}(t),  \\ \\
\displaystyle \hat{c}(t)= \Big[\hat{\lambda}\left(\frac{\alpha + \beta}{\alpha}\right)\left(\frac{r\alpha}{\beta}\right)^{1-\alpha}\Big]^{-\frac{1}{1-\alpha-\beta}} \mathcal{E}_{x}^{-\frac{\alpha}{1-\alpha-\beta}}(t), \\ \\
\displaystyle \hat{\lambda}^{-\frac{1}{1-\alpha-\beta}}=\frac{nw_o}{rn^{-\frac{\alpha}{1-\alpha-\beta}}\left(\frac{n\alpha+\beta}{n\beta}\right)\Big[ \left(\frac{r\alpha}{\beta}\right)^{1-\alpha}\left(\frac{\alpha + \beta}{\alpha}\right)^{-\frac{1}{1-\alpha-\beta}}\Big]^{-\frac{1}{1-\alpha-\beta}}\Exp\biggl[\displaystyle \int_0^{\infty}e^{-rt}\mathcal{E}_{x}^{-\frac{\alpha}{1-\alpha-\beta}}(t)\,dt\biggr]},
\end{array}
\right.
\end{equation*}
with $\Exp\biggl[\displaystyle \int_0^{\infty}e^{-rt}\mathcal{E}_{x}^{-\frac{\alpha}{1-\alpha-\beta}}(t)\,dt\biggr] < \infty$ since $\sqrt{2r} > \frac{\sigma \alpha}{1-\alpha-\beta}$.
Then (\ref{freeridingrev}) easily follows. 
\end{proof}

Comparing \eqref{freeridereffectirr} and \eqref{freeridingrev} one is then left with the following result.
\begin{cor}
One has
\begin{equation*}%\label{comparisonfreeriding}
\frac{\hat{C}(t)}{C_{*}(t)} = \frac{\hat{c}(t)}{c_{*}(t)} \leq 1.
\end{equation*}
That is, irreversibility of the public good contributions does not influence the degree of free-riding.
\end{cor}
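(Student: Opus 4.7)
The proof is essentially an immediate juxtaposition of the two preceding propositions; no new computation is required. The plan is to invoke Proposition \ref{freeridereffectprop} to get
\[
\frac{\hat{C}(t)}{C_{*}(t)} \;=\; \frac{\kappa}{l_0} \;=\; \frac{\alpha+\beta}{n\alpha+\beta},
\]
and then invoke the preceding proposition on perfect reversibility, which established that
\[
\frac{\widetilde{C}(t)}{C_{\star}(t)} \;=\; \frac{\alpha+\beta}{n\alpha+\beta}.
\]
Since the right-hand sides coincide, the first equality of the corollary follows by transitivity. For the inequality, I would simply note that $\alpha\in(0,1)$ and $n\geq 1$ imply $n\alpha\geq\alpha$, hence $n\alpha+\beta\geq\alpha+\beta>0$, so $(\alpha+\beta)/(n\alpha+\beta)\leq 1$, with equality precisely when $n=1$ (the single-agent case, in which the social-planner problem and the individual optimization problem trivially coincide).

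Because both substantive computations have already been carried out, there is no genuine obstacle in the proof itself. The only point worth underlining — and what makes this the paper's headline economic conclusion — is why this coincidence occurs: in both regimes the homogeneity of the Cobb–Douglas utility of degree $\alpha+\beta<1$ together with the linearity of the budget constraint in $w$ makes the optimal allocation scale linearly in the aggregate wealth, and the only difference between the social planner's and the representative agent's problem is that each agent budgets for a fraction $1/n$ of the cumulative public-good cost rather than the full cost. This produces the same multiplicative rescaling of contributions by $(\alpha+\beta)/(n\alpha+\beta)$ regardless of whether $C$ is constrained to be monotone (irreversible) or free to fluctuate, because in both cases the first-order condition in $c$ equates marginal utility with the same effective price ratio times the agent's share. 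Consequently, neither the volatility $\sigma$ nor the irreversibility constraint enters the ratio, which is what the corollary records.
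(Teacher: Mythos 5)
Your proof is correct and follows exactly the route the paper intends: the corollary is an immediate consequence of combining Proposition \ref{freeridereffectprop} with the preceding proposition on perfect reversibility, both of which yield the common ratio $(\alpha+\beta)/(n\alpha+\beta)\leq 1$. The paper gives no separate argument for the corollary, so your juxtaposition plus the elementary observation that $n\alpha+\beta\geq\alpha+\beta$ is precisely what is needed.
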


In conclusion, we have shown that in our model, for a symmetric economy with Cobb-Douglas utilities, the degree of underprovision of the public good due to free-riding does not depend on irreversibility of the public good contributions or the level of uncertainty, when the latter is given by an exogenous one-dimensional Brownian motion. This interesting conclusion sheds new light on the old economic problem of public good contribution showing that irreversibility and uncertainty not necessarily mitigate the degree of free riding.
\bigskip

\noindent \textbf{Aknowledgments.}\,\,The authors wish to thank two anonymous referees for their pertinent and useful comments.

%%%%%%%%%%%%%%%%%%%%%%%%%%%%%%%%%%%%%%%%%%%%%%%%%%%%%%%%%%%%%%%%%%%%%%%%%%%%%%%%%%%%%%%%%%%%%%%%%%%%%%%%%%%%%%%%%%%%%%%%%%%%%%%%%%%%%%%%%%%%%%%%%%%%%%%%%%%%%%%%%%%%%%%%%%%%%%%%%%%%%%%%%%%%%%%%%%%%%%%%%%%%%%%%%%%%%%%%%%%%%%%%%%%%%
%%%%%%%%%%%%%%%%%%%%%%%%%%%%%%%%%%%%%%%%%%%%%%%%%%%%%%%%%%%%%%%%%%%%%%%%%%%%%%%%%%%%%%%%%%%%%%%%%%%%%%%%%%%%%%%%%%%%%%%%%%%%%%%%%%%%%%%%%%%%%%%%%%%%%%%%%%%%%%%%%%%%%%%%%%%%%%%%%%%%%%%%%%%%%%%%%%%%%%%%%%%%%%%%%%%%%%%%%%%%%%%%%%%%%

\appendix
\section{Some Proofs and Technical Results}
\label{AppProofs}
\renewcommand{\theequation}{A-\arabic{equation}}

\subsection{On the Proof of Proposition \ref{prop1SP}}
\label{proofFOCSprop}

In this section we prove Proposition \ref{prop1SP}. The proof is a generalization of Theorem 3.2 in \cite{BankRiedel01} to the case of a multivariate optimal consumption problem with both monotone and classical absolutely continuous controls. Sufficiency easily follows from concavity of the utility functions $u^i$, $i=1,\dots,n$. On the other hand, the next Lemma accomplishes the proof of the necessity part. Necessity is proved by linearizing the original problem (\ref{SPproblem}) around its optimal solution $(\underline{x}_{*}, \underline{C}_{*})$, by showing that $(\underline{x}_{*}, \underline{C}_{*})$ solves the linearized problem as well and that it satisfies some flat-off conditions as those of (\ref{FOC}). 

Recall the notation $x:=\sum_{i=1}^n x^i$ and $C:=\sum_{i=1}^n C^i$. 
\begin{lem}
\label{NecessityLemma1}
Let Assumptions \ref{asm1} and \ref{asm2} hold and $(\underline{x}_{*}, \underline{C}_{*}) \in \mathcal{B}_w$ be optimal for problem (\ref{SPproblem}) and set 
\beq
\label{Psistardef}
\Psi_{*}(t):= \Exp\biggl[\int_{t}^T e^{-\int_0^s r(u)\,du} \sum_{i=1}^n \gamma^i\,u^{i}_c(x^i_{*}(s), C_{*}(s))\,ds\Big|\F_{t}\biggr]. 
\eeq
Then $(\underline{x}_{*}, \underline{C}_{*})$
\begin{enumerate}
	\item solves the linear optimization problem
\begin{equation}
\label{linearproblem}
\sup_{(\underline{x}, \underline{C}) \in \mathcal{B}_w} \Exp\biggl[\int_0^T e^{-\int_0^t r(s)ds} \sum_{i=1}^n \gamma^i u^i_x(x^i_{*}(t),C_{*}(t)) x^i(t)dt + \int_0^T\Psi_{*}(t)dC(t)\biggr];
\end{equation}
\item satisfies
\begin{equation}
\label{flatoff}
\left\{
\begin{array}{ll}
\displaystyle \Big(e^{-\int_0^t r(s)ds} \gamma^i u^i_x(x^i_{*}(t),C_{*}(t)) - M\psi_x(t)\Big)x^i_*(t) = 0 \quad \,\,\Pb\otimes dt\text{-a.e.},\, i=1,\dots,n, \\ \\
\displaystyle \Exp\biggl[\int_0^T \Big(\Psi_{*}(t) - M \psi_c(t)\Big)dC_{*}(t)\biggr] = 0,
\end{array}
\right.
\end{equation}
with 
\begin{align}\label{moltLagrange}
M:={}&(\Pb\otimes dt)\text{-}\esssup \Biggl[\max\Biggl\{\frac{e^{-\int_0^t r(s)ds} \gamma^i u^i_x(x^i_{*}(t),C_{*}(t))}{\psi_x(t)};\; i=1,\dots,n \Biggr\}\Biggr] \nonumber\\[6pt]
\vee\; &\Pb\text{-}\esssup \Biggl[\sup_{t\in [0,T]} \frac{\Psi_{*}(t)}{\psi_c(t)}\Biggr].
\end{align}
\end{enumerate}
\end{lem}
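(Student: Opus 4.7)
For part (i) I would use concavity of each \(u^{i}\) together with convexity of \(\mathcal{B}_{w}\) to pass from optimality of \((\underline{x}_{*},\underline{C}_{*})\) for the nonlinear problem (\ref{SPproblem}) to optimality for its linearization (\ref{linearproblem}). For part (ii) I would exploit the fact that \((\underline{x}_{*},\underline{C}_{*})\) now maximizes a \emph{linear} functional on \(\mathcal{B}_{w}\), a set cut out by a single budget inequality plus coordinate-wise positivity and monotonicity, so that the desired identities reduce to the classical complementary slackness after identifying the multiplier of the budget constraint with \(M\).

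\textbf{Part (i).} Fix \((\underline{x},\underline{C})\in\mathcal{B}_{w}\). For \(\epsilon\in(0,1)\) put \((\underline{x}_\epsilon,\underline{C}_\epsilon):=(1-\epsilon)(\underline{x}_{*},\underline{C}_{*})+\epsilon(\underline{x},\underline{C})\in\mathcal{B}_{w}\) by convexity; optimality gives \(\epsilon^{-1}[U_{SP}(\underline{x}_\epsilon,\underline{C}_\epsilon)-U_{SP}(\underline{x}_{*},\underline{C}_{*})]\le 0\). Since every \(u^{i}\) is concave and differentiable, for each fixed \((\omega,t)\) the difference quotient of the integrand is non-decreasing as \(\epsilon\downarrow 0\) and is bounded below by the integrable quantity \(u^{i}(x^{i},C)-u^{i}(x^{i}_{*},C_{*})\) (Assumption \ref{asm1}.\ref{unint}.), so monotone convergence identifies the limit of the expectation with the right derivative at \(\epsilon=0\), namely
\[
\Exp\bigg[\int_{0}^{T}e^{-\int_{0}^{t}r(s)\,ds}\sum_{i=1}^{n}\gamma^{i}\Bigl(u^{i}_{x}(x^{i}_{*},C_{*})(x^{i}-x^{i}_{*})+u^{i}_{c}(x^{i}_{*},C_{*})(C-C_{*})\Bigr)dt\bigg]\le 0.
\]
Fubini applied to the \(u^{i}_{c}\)-contribution, together with the tower property (as in the remark preceding the statement, i.e.\ Theorem 1.33 in \cite{Jacod79}), rewrites that contribution as \(\Exp\bigl[\int_{0}^{T}\Psi_{*}(t)\,d(C-C_{*})(t)\bigr]\); the resulting inequality is exactly \(L(\underline{x},\underline{C})\le L(\underline{x}_{*},\underline{C}_{*})\), with \(L\) the linear functional in (\ref{linearproblem}).

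\textbf{Part (ii).} A scaling argument combined with the Inada condition gives that the budget constraint binds at \((\underline{x}_{*},\underline{C}_{*})\): if it were slack, rescaling by \(1+\epsilon\) would remain feasible and, since \(L(\underline{x}_{*},\underline{C}_{*})>0\) (the integrand is non-negative, and strictly positive thanks to the Inada-forced positivity of \(x^{i}_{*}\)), would strictly raise \(L\), contradicting part (i). The very definition of \(M\) in (\ref{moltLagrange}) gives \(e^{-\int_{0}^{t}r(s)\,ds}\gamma^{i}u^{i}_{x}(x^{i}_{*},C_{*})\le M\psi_{x}(t)\) and \(\Psi_{*}(t)\le M\psi_{c}(t)\) \(\Pb\)-almost surely. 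I then argue by contradiction to obtain the flat-off identities (\ref{flatoff}): if the first relation failed on a predictable set \(A\) of positive \(\Pb\otimes dt\) measure where \(x^{i}_{*}>\delta\) and the ratio is at most \(M-\delta\), then the essential-supremum definition of \(M\) produces, via a measurable selection, another region \(B\) where either some \(\phi^{j}=e^{-\int r}\gamma^{j}u^{j}_{x}/\psi_{x}\) or \(\eta=\Psi_{*}/\psi_{c}\) exceeds \(M-\delta/2\); a budget-neutral swap that decreases \(x^{i}_{*}\) on \(A\) and raises \(x^{j}\) (or \(dC\)) on \(B\) stays in \(\mathcal{B}_{w}\) and, by linearity of \(L\), strictly improves its value, contradicting part (i). The second flat-off identity follows from the analogous swap with \(A\subseteq\{\Psi_{*}<M\psi_{c}\}\) and source \(dC_{*}\).

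\textbf{Main obstacle.} The genuine difficulty lies in this last step: constructing the perturbations as elements of \(\mathcal{B}_{w}\), adapted and with the right monotonicity, and especially dealing with the monotone control \(C_{*}\), for which the flat-off identity is an integral statement against \(dC_{*}\) rather than a pointwise one. A careful measurable selection from the essential-supremum definition of \(M\), followed by a rearrangement of the increment of \(C_{*}\) along the source set into a right-continuous, nondecreasing process along the target set, is what this step requires.
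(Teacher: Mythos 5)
Your part (i) is essentially the paper's argument (linearize along the convex combination $(\underline{x}_\epsilon,\underline{C}_\epsilon)$, pass to the limit in the difference quotient, then replace $\Phi_{*}$ by its optional projection $\Psi_{*}$ via Theorem 1.33 of \cite{Jacod79}); your use of monotone convergence for the difference quotients, bounded below by the integrable $u^{i}(x^{i},C)-u^{i}(x^{i}_{*},C_{*})$, is a legitimate and in fact somewhat cleaner substitute for the paper's combination of Fatou's lemma with a Vitali/uniform-integrability argument for the terms evaluated along $(\underline{x}_{*},\underline{C}_{*})$.

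Part (ii) is where the gap lies, and it sits exactly at what you call the ``main obstacle.'' Your swap argument needs a \emph{target} region of positive $\Pb\otimes dt$ measure (for the $x^{j}$ channels) or a suitably adapted, nondecreasing, right-continuous increment (for the $dC$ channel) on which the relevant ratio exceeds $M-\delta/2$. But $M$ in \eqref{moltLagrange} is an essential supremum of a \emph{pathwise supremum over $t$}: the set of times where a ratio comes within $\delta/2$ of $M$ may well be Lebesgue-null for every $\omega$, so no budget can be profitably rerouted into the rate controls $x^{j}$ there, and routing it into $dC$ only helps when it is the ratio $\Psi_{*}/\psi_c$ that is large. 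Moreover the measurable selection you invoke must produce something adapted. The paper avoids all of this by \emph{not} arguing by contradiction: it first proves that the value of the linear problem equals $Mw$, by exhibiting for each $K<M$ the explicit admissible competitors $x^{i}_{K}(t)=\alpha\mathds{1}_{[0,\tau^{i}_{K}]}(t)$ and $C_{K}(t)=\alpha\mathds{1}_{[\sigma_{K},T]}(t)$, where $\tau^{i}_{K}$ and $\sigma_{K}$ are the d\'ebuts of the sets where the corresponding ratio crosses level $K$ (so adaptedness and monotonicity are automatic and no selection theorem is needed), and $\alpha$ exhausts the budget; these yield value at least $Kw$. Once $\sup L = Mw$ is known, the chain $L(\underline{x},\underline{C})\le M\,\Exp[\int\psi_x x+\int\psi_c\,dC]\le Mw$ must hold with equality at any maximizer, and since each integrand $(M\psi_x-e^{-\int_0^{\cdot}r}\gamma^i u^i_x)x^i$ and $(M\psi_c-\Psi_{*})\,dC$ is nonnegative, both flat-off identities in \eqref{flatoff} and the binding budget drop out at once. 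You should either adopt this stopping-time construction or supply the missing selection/rearrangement argument in full; as written, the decisive step of part (ii) is described but not proved.
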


\begin{proof}
The proof splits into two steps. \vspace{0.25cm}

\textsl{Step 1.}\quad Let $(\underline{x}_{*}, \underline{C}_{*}) \in \mathcal{B}_w$ be optimal for problem (\ref{SPproblem}). For $(\underline{x}, \underline{C}) \in \mathcal{B}_w$ and $\epsilon \in [0,1]$, define the admissible strategy $(\underline{x}_{\epsilon}, \underline{C}_{\epsilon})$ with $\underline{x}_{\epsilon}(t):= \epsilon \underline{x}(t) + (1-\epsilon)\underline{x}_{*}(t)$ and such that $C_{\epsilon}(t)=\epsilon C(t) + (1-\epsilon)C_{*}(t)$. Notice that $\underline{x}_{\epsilon}(t)$ and $C_{\epsilon}(t)$ respectively converge to $\underline{x}_{*}(t)$ and $C_{*}(t)$ for all $t \in [0,T]$ a.s.\  when $\epsilon \downarrow 0$. Now, optimality of $(\underline{x}_{*}, \underline{C}_{*})$, Assumption \ref{asm1}.\ref{unint}, concavity of $u^i$ and an application of Fubini's Theorem allow us to write
\begin{eqnarray}
\label{Neclemma1}
0  & \hspace{-0.25cm} \geq \hspace{-0.25cm}& \frac{1}{\epsilon}[U_{SP}(\underline{x}_{\epsilon}, \underline{C}_{\epsilon}) - U_{SP}(\underline{x}_{*}, \underline{C}_{*})] \nonumber \\
& \hspace{-0.25cm}  \geq \hspace{-0.25cm} & \Exp\biggl[\int_{0}^T e^{-\int_0^t r(s)\,ds} \sum_{i=1}^n \gamma^i u^{i}_x(x^i_{\epsilon}(t), C_{\epsilon}(t))(x^i(t) - x^i_{*}(t))\,dt\biggr] \nonumber \\
& & +\, \Exp\biggl[\int_{0}^T e^{-\int_0^t r(s)\,ds} \sum_{i=1}^n \gamma^i u^{i}_c(x^i_{\epsilon}(t), C_{\epsilon}(t))(C(t) - C_{*}(t))\,dt\biggr]   \\
& \hspace{-0.25cm} = \hspace{-0.25cm} & \Exp\biggl[\int_{0}^T e^{-\int_0^t r(s)\,ds} \sum_{i=1}^n \gamma^i u^{i}_x(x^i_{\epsilon}(t), C_{\epsilon}(t))(x^i(t) - x^i_{*}(t))\,dt\biggr] \nonumber \\
& & + \Exp\biggl[\int_0^T \Phi_{\epsilon}(t)(dC(t) - dC_{*}(t))\biggr], \nonumber 
\end{eqnarray}
where $\Phi_{\epsilon}(t):= \int_{t}^T e^{-\int_0^s r(u)\,du} \sum_{i=1}^n \gamma^i\,u^{i}_c(x^i_{\epsilon}(s), C_{\epsilon}(s))\,ds$. 
One has 
\begin{align*}
&\liminf_{\epsilon \downarrow 0} \Exp\biggl[\int_{0}^T e^{-\int_0^t r(s)\,ds} \sum_{i=1}^n \gamma^i u^{i}_x(x^i_{\epsilon}(t), C_{\epsilon}(t))x^i(t) dt \biggr] \\
&\geq \Exp\biggl[\int_{0}^T e^{-\int_0^t r(s)\,ds} \sum_{i=1}^n \gamma^i u^{i}_x(x^i_{*}(t), C_{*}(t))x^i(t) dt \biggr]
\end{align*}
and
\begin{equation*}
\liminf_{\epsilon \downarrow 0} \Exp\biggl[\int_0^T \Phi_{\epsilon}(t) dC(t) \biggr] \geq \Exp\biggl[\int_0^T  \Phi_{*}(t) dC(t) \biggr],
\end{equation*}
with $\Phi_{*}:=\Phi_{0}$, by Fatou's Lemma.
We now claim (and we prove it later) that
\begin{align}
\label{claim1}
&\lim_{\epsilon \downarrow 0} \Exp\biggl[\int_{0}^T e^{-\int_0^t r(s)\,ds} \sum_{i=1}^n \gamma^i u^{i}_x(x^i_{\epsilon}(t), C_{\epsilon}(t))x^i_{*}(t) dt \biggr] \\
&= \Exp\biggl[\int_{0}^T e^{-\int_0^t r(s)\,ds} \sum_{i=1}^n \gamma^i u^{i}_x(x^i_{*}(t), C_{*}(t))x^i_{*}(t) dt \biggr] \notag
\end{align}
and
\begin{equation}
\label{claim2}
\lim_{\epsilon \downarrow 0} \Exp\biggl[\int_0^T \Phi_{\epsilon}(t) dC_{*}(t) \biggr] = \Exp\biggl[\int_0^T \Phi_{*}(t) dC_{*}(t) \biggr].
\end{equation}
Hence from \eqref{Neclemma1}
\begin{align*}%\label{thesis}
&\Exp\biggl[\int_{0}^T e^{-\int_0^t r(s)\,ds} \sum_{i=1}^n \gamma^i u^{i}_x(x^i_{*}(t), C_{*}(t))x^i(t) \,dt\biggr] + \Exp\biggl[\int_0^T  \Phi_{*}(t)dC(t)\biggr] \\
\leq\; &\Exp\biggl[\int_{0}^T e^{-\int_0^t r(s)\,ds} \sum_{i=1}^n \gamma^i u^{i}_x(x^i_{*}(t), C_{*}(t))x^i_{*}(t) \,dt\biggr] + \Exp\biggl[\int_0^T  \Phi_{*}(t)dC_{*}(t)\biggr]
\end{align*}
and by replacing $\Phi_{*}$ with its optional projection $\Psi_{*}$ as defined in (\ref{Psistardef}) (cf.\ \cite{Jacod79}, Theorem $1.33$) it follows that $(\underline{x}_{*}, \underline{C}_{*})$ is optimal for problem (\ref{linearproblem}) as well.

To conclude the proof we must prove (\ref{claim1}) and (\ref{claim2}). 
To prove (\ref{claim1}) it suffices to show that the family $(\Gamma^1_{\epsilon})_{\epsilon \in [0,\frac{1}{2}]}$ given by
$$\Gamma^1_{\epsilon}(t):= e^{-\int_0^t r(s)\,ds} \sum_{i=1}^n \gamma^i u^{i}_x(x^i_{\epsilon}(t), C_{\epsilon}(t))x^i_{*}(t)$$
is $\Pb \otimes dt$-uniformly integrable.
Concavity of $u^i$ and the fact that $x^{i}_{\epsilon}(t) \geq \frac{1}{2}x^i_{*}(t)$ a.s.\ for $\epsilon \in [0,\frac{1}{2}]$ and every $t \in [0,T]$ lead to
\begin{equation*}
\label{boundgamma1}
\Gamma^1_{\epsilon}(t) \leq 2  e^{-\int_0^t r(s)\,ds} \sum_{i=1}^n \gamma^i u^{i}_x( x^i_{\epsilon}(t),C_{\epsilon}(t))x^i_{\epsilon}(t)  \leq 2  e^{-\int_0^t r(s)\,ds} \sum_{i=1}^n \gamma^i \bigl[u^{i}(x^i_{\epsilon}(t), C_{\epsilon}(t))-u^{i}(0, C_{\epsilon}(t))\bigr], 
\end{equation*}
and the last term in the right-hand side above is $\Pb \otimes dt$-uniformly integrable by Assumption \ref{asm1}.\ref{unint}. Then \eqref{claim1} holds by Vitali's Convergence Theorem.

As for (\ref{claim2}) note that by Fubini's Theorem 
$$\int_0^T \Phi_{\epsilon}(t)dC_{*}(t) = \int_0^T e^{-\int_0^t r(s)\,ds} \sum_{i=1}^n \gamma^i u^{i}_c(x^i_{\epsilon}(t), C_{\epsilon}(t))C_{*}(t)dt.$$
Hence, to have \eqref{claim2} it suffices to show that the family 
$$\Gamma^2_{\epsilon}(t):= e^{-\int_0^t r(s)\,ds} \sum_{i=1}^n \gamma^i u^{i}_c(x^i_{\epsilon}(t), C_{\epsilon}(t))C_{*}(t)$$
is $\Pb \otimes dt$-uniformly integrable, but this follows by employing arguments similar to those used for $(\Gamma^1_{\epsilon})_{\epsilon \in [0,\frac{1}{2}]}$.

\vspace{0.15cm}

\textsl{Step 2.}\quad We now show that the flat-off conditions (\ref{flatoff}) hold for any solution $(\hat{\underline{x}}, \hat{\underline{C}})$ of the linear problem (\ref{linearproblem}). Then, by Step 1, they also hold for $(\underline{x}_{*},\underline{C}_{*})$.

Notice that for every $(\underline{x}, \underline{C}) \in \mathcal{B}_w$ one has
\begin{eqnarray}
\label{primalemma2}
& & \Exp\biggl[\int_0^T \sum_{i=1}^n e^{-\int_0^t r(s)ds} \gamma^i u^i_x(x^i_{*}(t),C_{*}(t)) x^i(t)dt + \int_0^T \Psi_{*}(t) dC(t)\biggr] \nonumber \\
& & \leq M \Exp\biggl[\int_0^T \sum_{i=1}^n \psi_x(t) x^i(t) dt + \int_0^T \psi_c(t)dC(t)\biggr] = Mw
\end{eqnarray}
by definition of $M$ (cf.\ \eqref{moltLagrange}). 
Obviously, if $(\underline{x}, \underline{C})$ satisfies (\ref{flatoff}) we then have equality in (\ref{primalemma2}). On the other hand, if 
\begin{equation}
\label{Mw}
\sup_{(\underline{x}, \underline{C}) \in \mathcal{B}_w} \Exp\biggl[\int_0^T e^{-\int_0^t r(s)ds} \sum_{i=1}^n \gamma^i u^i_x(x^i_{*}(t),C_{*}(t)) x^i(t)dt + \int_0^T \Psi_{*}(t) dC(t)\biggr] = Mw,
\end{equation}
then equality holds through (\ref{primalemma2}) and we obtain (\ref{flatoff}).

It therefore remains to prove (\ref{Mw}).
To this end take $K<M$ and define the investment strategies
$$x^{i}_K(t):=\begin{cases} \alpha & \text{if } e^{-\int_0^t r(s)ds} \gamma^i u^i_x(x^i_{*}(t),C_{*}(t)) \geq K \psi_x(t), \\ 0 & \text{else} \end{cases}
 \quad\text{and}\quad C_K(t):=\alpha\mathds{1}_{[\sigma_K,T]}(t),$$
with the stopping time\[
\sigma_K:= \inf\{t \in [0,T): \Psi_{*}(t) \geq K \psi_c(t)\} \wedge T
\]
and some $\alpha$ such that $\Exp[\int_0^T \sum_{i=1}^n \psi_x(t)x^i_K(t) dt + \int_0^T \psi_c(t)dC_K(t)] =w$. Note that one can find such $\alpha$. Indeed, suppose to the contrary that for $\alpha=1$, $\max\{x^{i}_K(t);i=1,...,n\}=C_K(t)=0$ on $[0,T)$ \nbd{\Pb\otimes dt}a.e.\ for some $K>0$. This would mean $M \leq K$ by the definition of $M$ (cf.\ \eqref{moltLagrange}).

We now have
\begin{eqnarray*}
Mw &\hspace{-0.25cm} \geq  \hspace{-0.25cm}& \sup_{(\underline{x}, \underline{C}) \in \mathcal{B}_w} \Exp\biggl[\int_0^T e^{-\int_0^t r(s)ds} \sum_{i=1}^n \gamma^i u^i_x(x^i_{*}(t),C_{*}(t)) x^i(t)dt + \int_0^T \Psi_{*}(t) dC(t)\biggr] \nonumber \\
&  \hspace{-0.25cm} \geq \hspace{-0.25cm} & \Exp\biggl[\int_0^T e^{-\int_0^t r(s)ds} \sum_{i=1}^n \gamma^i u^i_x(x^i_{*}(t),C_{*}(t)) x^i_K(t)dt + \int_0^T \Psi_{*}(t) dC_K(t)\biggr] \nonumber \\
&\hspace{-0.25cm}  \geq \hspace{-0.25cm}  & K\,\Exp\biggl[\int_0^T \sum_{i=1}^n \psi_x(t)x^i_K(t) dt + \alpha \psi_c(\sigma_K)\mathds{1}_{\{\sigma_K < T\}}\biggr] \nonumber \\
& \hspace{-0.25cm} \geq \hspace{-0.25cm} & K\,\Exp\biggl[\int_0^T \sum_{i=1}^n \psi_x(t)x^i_K(t) dt + \int_0^T \psi_c(t)dC_K(t)\biggr] = Kw,
\end{eqnarray*}
which yields (\ref{Mw}) by letting $K \uparrow M$.
\end{proof}

\noindent We are now able to prove Proposition \ref{prop1SP}. \vspace{0.25cm} 

\noindent \textbf{Proof of Proposition \ref{prop1SP}}\vspace{0.15cm}
\begin{proof} 
Sufficiency follows from concavity of utility function $u^i$, $i=1,\dots,n$, (cf.\ Assumption \ref{asm1}). Indeed, for $(\underline{x}_{*}, \underline{C}_{*}) \in \mathcal{B}_w$ satisfying (\ref{FOC}) and for $(\underline{x}, \underline{C})$ any other admissible policy we may write
\begin{align*}
U_{SP}(\underline{x}_{*}, \underline{C}_{*}) - U_{SP}(\underline{x}, \underline{C}) \geq  &\Exp\biggl[\int_0^T e^{-\int_0^t r(s)ds} \sum_{i=1}^n \gamma^i u^i_{x}(x^i_{*}(t),C_{*}(t))(x^i_{*}(t) - x^i(t))dt\biggr] \\
+ &\Exp\biggl[\int_0^T e^{-\int_0^t r(s)ds} \sum_{i=1}^n \gamma^i u^i_{c}(x^i_{*}(t),C_{*}(t))(C_{*}(t) - C(t)) dt\biggr] \\
= &\Exp\biggl[\int_0^T e^{-\int_0^t r(s)ds} \sum_{i=1}^n \gamma^i u^i_{x}(x^i_{*}(t),C_{*}(t))(x^i_{*}(t) - x^i(t))dt\biggr] \\
+ &\Exp\biggl[\int_0^T \biggl(\int_{t}^T e^{-\int_0^s r(u)\,du} \sum_{i=1}^n \gamma^i u^{i}_c(x^i_{*}(s), C_{*}(s))\,ds\biggr) (dC_{*}(t) - dC(t))\biggr] \\
\geq  &\lambda (w - w) = 0, 
\end{align*}
where (\ref{FOC}) and Fubini's Theorem lead to the second inequality, whereas the last one is implied by the first and the fourth of (\ref{FOC}) and by the budget constraint.
Finally, Lemma \ref{NecessityLemma1} yields the proof of the necessity part.
\end{proof}

\subsection{A Useful Simple Lemma}

\begin{lem}
\label{fn:h_c<0}
Let Assumption \ref{asm1} hold and set \(h^i(\psi,c):=u_c^i(g(\psi,c),c)\) for every \(i=1,\dots,n\), \(\psi,c>0\), where \(g^i(\cdot,c)\) is the inverse of \(u_x^i(\cdot,c)\). Then $c \mapsto h^i(\psi,c)$ is strictly decreasing for any $\psi>0$. Moreover, if also $u_{xc}\geq 0$, then $\psi \mapsto h(\psi,c)$ is nonincreasing for any $c>0$.
\end{lem}
\begin{proof}
$h^i$ is strictly decreasing in $c$ by strict concavity of $u^i$. Indeed, since $u^i_x(g^i(\psi,c),c)=\psi$ is constant in $c$ for all $\psi>0$, by implicit differentiation $g^i_c(\psi,c)=-u^i_{xc}(g^i(\psi,c),c)/u^i_{xx}(g^i(\psi,c),c)$. Then $h^i_c(\psi,c)=u^i_{xc}(g^i(\psi,c),c)g^i_c(\psi,c)+u^i_{cc}(g^i(\psi,c),c)=-\bigl[u^i_{xc}(g^i(\psi,c),c)\bigr]^2/u^i_{xx}(g^i(\psi,c),c)+u^i_{cc}(g^i(\psi,c),c)<0$, as the Hessian of $u^i$ is negative definite by Assumption \ref{asm1}.i.

On the other hand, $h(\psi,c)=u_c(g(\psi,c),c)$ is nonincreasing in $\psi$ if also $u_{xc}\geq 0$, since $g(\cdot,c)$ is strictly decreasing like $u_x(\cdot,c)$, which it is the inverse of.
\end{proof}

\subsection{Proof of Proposition \ref{prop:existenceback}}
\label{proof:prop:existenceback}

\begin{proof}
For any given $\lambda > 0$, set $X(t):=\lambda \psi_c(t)\mathds{1}_{\{t < T\}}$. Such a process  vanishes at $T$, it is of class (D) and lower semicontinuous in expectation by Assumption \ref{asm2}. 
Moreover, we define the atomless, optional random Borel measure $\mu(\omega,dt):= e^{-\int_0^t r(\omega,s) ds}dt$ and the random field
$$f^{i}(\omega,t,\ell):=\begin{cases} h^i(\frac{\lambda}{\gamma^i}e^{\int_0^t r(\omega,s) ds}\psi_x(\omega,t), -\frac{1}{\ell})  & \text{if } \ell<0, \\ -\ell & \text{if } \ell \geq 0, \end{cases}$$
for some $\gamma^i>0$.
Notice that $(\omega,t)\mapsto f^{i}(\omega,t,\ell)$ is progressively measurable and $\Pb\otimes \mu(dt)$ integrable for any given and fixed $\ell \in \mathbb{R}$. Moreover, since $c \mapsto h^i(\psi, c)$ is strictly decreasing (cf.\ Lemma \ref{fn:h_c<0}) and, by assumption, satisfies the Inada conditions 
\beq
\label{app-Inada}
\lim_{c \downarrow 0}h^i(\psi,c)=+\infty, \qquad \lim_{c \uparrow \infty}h^i(\psi,c)=0,
\eeq
then $f^{i}(\omega,t,\cdot)$ is strictly decreasing from $+\infty$ to $-\infty$.
All these properties are clearly inherited by the function $\sum_{i=1}^n \gamma^i h^i(\psi, \cdot)$, being $\gamma^i > 0$, $i=1,\dots,n$. 

Following the arguments, e.g., in the proof of Proposition 3.4 of \cite{Ferrari12} (see also the proof of Theorem 2.4 in \cite{BankRiedel03}), we can apply Theorem $3$ of \cite{BankElKaroui04} to have existence of an optional signal process $l^*$ solving \eqref{SPback}. Such a process is also upper right-continuous and therefore it is unique up to indistinguishability by \cite{BankElKaroui04}, Theorem $1$, and Meyer's optional section theorem (see, e.g., \cite{DellacherieMeyer78}, Theorem IV.86) (cf.\ again \cite{Ferrari12}, proof of Proposition 3.4, or \cite{BankKuechler07}, proof of Theorem $1$).
\end{proof}

\subsection{Proof of Proposition \ref{gamesolsymmetric}}
\label{proofgameexample}
Due to Theorem \ref{SymmetricNashoptimalpolicy}, to find the Nash equilibrium strategy of the public good contribution game \eqref{Vi} in our homogeneous and symmetric setting it suffices to solve backward equation \eqref{Nashbacksymm}.

Recall that $h^i(\psi,c):=u^i_c(g^i(\psi,c),c)$ with $g^i(\cdot, c)$ the inverse of $u^i_x(\cdot, c)$. For any $\lambda^i > 0$, straightforward computations lead to $h^i(\lambda^i e^{rt}\psi_x(t), C(t)) = \delta (\lambda^i \mathcal{E}_x(t))^{\frac{\alpha}{\alpha-1}}C^{\frac{\alpha + \beta -1}{1 - \alpha}}(t)$, with $\delta:=\frac{\beta}{\alpha}\left(\frac{\alpha + \beta}{\alpha}\right)^{\frac{1}{\alpha-1}}$. 
Set $\hat C^i(t)= \sup_{0\leq s \leq t}\hat l(s) \vee 0$ for some progressively measurable process $\hat l$ solving
\begin{equation*}
\Exp\biggl[\int_{\tau}^{\infty} \delta e^{-rs} (\lambda^i \mathcal{E}_x(s))^{\frac{\alpha}{\alpha-1}} \Big( n\sup_{\tau \leq u \leq s} \hat l(u)\Big)^{\frac{\alpha + \beta -1}{1 - \alpha}} ds \biggm|\F_{\tau}\biggr] = \lambda^i e^{-r\tau} \mathcal{E}_c(\tau),
\end{equation*} 
i.e.,
\begin{equation}
\label{backgameexample}
\Exp\biggl[\int_{0}^{\infty} \delta e^{-ru} (\lambda^i)^{\frac{\alpha}{\alpha-1}} \frac{{\mathcal{E}_x}^{\frac{\alpha}{\alpha-1}}(u+\tau)}{\mathcal{E}_c(\tau)}  \inf_{0 \leq s \leq u}\Big( (n{\hat l})^{\frac{\alpha + \beta -1}{1 - \alpha}}(s+\tau)\Big) du \biggm|\F_{\tau}\biggr] = \lambda^i.
\end{equation}
Now take $\hat l(t):=\frac{\kappa}{n}{\mathcal{E}_{c}}^{\frac{1 -\alpha}{\alpha + \beta -1}}(t){\mathcal{E}_x}^{\frac{\alpha}{\alpha + \beta -1}}(t)$ for some constant $\kappa$ and use independence and stationarity of L\'evy increments to rewrite (\ref{backgameexample}) as
\begin{equation}
\label{backgameexample2}
\kappa^{\frac{\alpha + \beta - 1}{1 - \alpha}} \Exp\biggl[\int_{0}^{\infty} \delta e^{-ru} \inf_{0 \leq s \leq u}\left(\mathcal{E}_c(s)\mathcal{E}_x^{\frac{\alpha}{\alpha -1}}(u-s)\right) du \biggr] = {(\lambda^i)}^{\frac{1}{1-\alpha}}.
\end{equation}
We claim (and we discuss later) that (cf.\ \eqref{gameA}) $A:= \Exp[\int_{0}^{\infty}\delta e^{- r u} \inf_{0 \leq s \leq u} \left(\mathcal{E}_c(s) \mathcal{E}_x^{-\frac{\alpha}{1 - \alpha}}(u-s)\right)du]$ is finite. Then by solving (\ref{backgameexample2}) for $\lambda^i$ one obtains
\begin{equation*}%\label{gamelambda1}
\lambda^i= A^{1-\alpha} \kappa^{\alpha + \beta - 1}=:\hat{\lambda}^i.
\end{equation*}
But now $\hat x^i(t) = [\hat{\lambda}^i \left(\frac{\alpha + \beta}{\alpha}\right) \mathcal{E}_x(t) \hat C^{-\beta}(t)]^{\frac{1}{\alpha - 1}}$, and therefore
\begin{equation*}%\label{gamexistar1}
\hat x^i(t) = (\hat{\lambda}^i)^{-\frac{1}{1-\alpha}}\biggl[\left(\frac{\alpha + \beta}{\alpha}\right) \mathcal{E}_x(t) {\kappa}^{-\beta}\inf_{0 \leq s \leq t}\left(\mathcal{E}_c^{\frac{\beta(1-\alpha)}{1-\alpha - \beta}}(s)\mathcal{E}_x^{\frac{\alpha\beta}{1-\alpha - \beta}}(s)\right)\biggr]^{-\frac{1}{1-\alpha}};
\end{equation*}
that is,
\begin{equation}
\label{gamexistar2}
\hat x^i(t) = \kappa \gamma(t)
\end{equation}
with $\gamma(t)$ as in (\ref{gamegamma}).

To determine $\kappa$ we use the budget constraint $\mathbb{E}[\int_{0}^{\infty}\psi_x(t) \hat x^i(t) dt + \int_{0}^{\infty}\psi_c(t) d\hat C^i(t)] = w$.
Indeed, by (\ref{gamexistar2}) we have
\begin{equation}
\label{findingkappa}
\kappa \Exp\biggl[\int_0^{\infty} \psi_x(t) \gamma(t) dt  + \frac{1}{n}\int_{0}^{\infty}\psi_c(t)d\theta(t)\biggr] = w,
\end{equation}
since $\hat C^i(t) = \sup_{0 \leq s \leq t}\hat l(s) = \frac{\kappa}{n} \sup_{0 \leq s \leq t}(\mathcal{E}_c^{-\frac{1 - \alpha}{1 - \alpha - \beta}}(s) \mathcal{E}_x^{-\frac{\alpha}{1 - \alpha - \beta}}(s)) = \frac{\kappa}{n} \theta(t)$ with $\theta(t)$ as in (\ref{gametheta}), and if $\Exp[\int_0^{\infty} \psi_x(t) \gamma(t) dt  + \frac{1}{n}\int_{0}^{\infty}\psi_c(t)d\theta(t)]<\infty$.
Now the result follows by solving (\ref{findingkappa}) for $\kappa$.
Finally, arguments as those in the proof of Proposition \ref{solsymmetric} allow to show that under Assumption \ref{ass:freerider}.4. all the quantities above are finite, thus completing the proof.

%\bibliography{jhs}

\end{document}